 \journalname{Probability Theory and Related Fields}
\definecolor{my-link}{rgb}{0.5,0.0,0.0}
\definecolor{my-blue}{rgb}{0.0,0.0,0.6}
\definecolor{my-red}{rgb}{0.5,0.0,0.0}
\definecolor{my-green}{rgb}{0.0,0.5,0.0}
\definecolor{nicos-red}{rgb}{0.75,0.0,0.0}
\definecolor{light-gray}{gray}{0.6}
\definecolor{really-light-gray}{gray}{0.8}
\newcommand{\be}{\begin{equation}}
\newcommand{\ee}{\end{equation}}
\providecommand{\abs}[1]{\vert#1\vert}
\providecommand{\pp}[1]{\langle#1\rangle}
\newcommand{\fl}[1]{\lfloor{#1}\rfloor}
\def\Coc{\mathscr K}
\def\Cor{\Coc_0}
\def\cC{\mathcal{C}}  
   \def\cE{\mathcal{E}}
\def\cG{\mathcal{G}}
\def\esssup{\mathop{\mathrm{ess\,sup}}}
\def\Omhat{\widehat\Omega}
\def\what{\widehat\w}
\newcommand{\one}{\mbox{\mymathbb{1}}}
\def\kS{\mathfrak{S}}
\def\bE{\mathbb{E}}
\def\bN{\mathbb{N}}
\def\bP{\mathbb{P}}
\def\bR{\mathbb{R}}
\def\bZ{\mathbb{Z}}
 \def\Z{\bZ}  \def\R{\bR}\def\N{\bN}
\def\w{\omega}
\def\om{\omega}
\def\e{\varepsilon}
\def\ddd{\displaystyle} 
\def\ri{{\mathrm{ri\,}}}
\def\m1{\mathbf{1}}
\def\kS{\mathfrak S}
 \def\Vvv{{\rm\mathbb{V}ar}}
 \def\wt{\widetilde}  \def\wh{\widehat} \def\wb{\overline}
\def\E{\bE}
\def\P{\bP} 
\def\funct lp{L} 
\def\funct lpbar{\bar L} 
\font \mymathbb = bbold10 at 11pt
\def\range{\mathcal R}
\def\Uset{\mathcal U}
\def\Diff{\mathcal D}
\def\EP{\mathcal E}
\def\gpp{{g_{\text{pp}}}}
\def\gpl{{g_{\text{pl}}}}
\def\gppa{\gamma}
\def\Gsw{G}
\def\f{f}
\def\cA{{\mathcal A}}
\def\Gpl{G}
\def\Gpp{G}
\def\gpp{g_{\text{\rm pp}}}
\def\gpl{g_{\text{\rm pl}}}
\def\ri{{\mathrm{ri\,}}}
\def\f{{f}} 
\def\gppa{{\gamma}}
\def\amin{{\underline\alpha}}
\def\amax{{\overline\alpha}}
\def\ximin{{\underline\xi}}
\def\ximax{{\overline\xi}}
\def\zetamin{{\underline\zeta}}
\def\zetamax{{\overline\zeta}}
\def\B{{B}}
\def\cE{{\mathcal E}}
\def\W{{W}} 
\def\S{{S}}
\def\A{{A}}
\def\D{{D}}
\def\Wp{{\wt W}}
\def\Sp{{\wt S}}
\def\Ap{{\wt A}}
\def\M{{\mathcal M}}
\def\U{{U}}
\def\Ombig{{\widehat\Omega}}
\def\Pbig{{\widehat\P}}
\def\Ebig{{\widehat\E}}
\def\kSbig{{\widehat\kS}}
\def\OBP{(\Omega, \kS, \P)} 
\def\OBPbig{(\Ombig, \kSbig, \Pbig)}
\def\Gsw{G^{\rm SW}}
\def\Gne{G^{\rm NE}}
\def\Gn{G^{\rm N}}
\def\pp{p}  
\def\Ew{m_0}   
\def\opc{\vec p_c} 
\def\cla{\cO} 
\def\etamin{{\underline\eta}}
\def\etamax{{\overline\eta}}
\def\pc{\vec p_c}
\def\cO{\mathcal O}
\definecolor{darkgreen}{rgb}{0.0,0.5,0.0}
\definecolor{darkblue}{rgb}{0.0,0.0,0.3}
\definecolor{nicosred}{rgb}{0.65,0.1,0.1}
\definecolor{light-gray}{gray}{0.7}
\newenvironment{proofof}[2]{\removelastskip\vspace{6pt}\noindent
 {\it Proof  #1.}~\rm#2}{\par\vspace{6pt}}
\begin{document}

\title{Stationary cocycles and Busemann functions\\  for the corner growth model
}

\titlerunning{Corner growth model}        

\author{N.\ Georgiou         \and
        F.\ Rassoul-Agha \and
        T.\ Sepp\"al\"ainen 
}

\authorrunning{N.\ Georgiou, F.\ Rassoul-Agha, and T.\ Sepp\"al\"ainen} 

\institute{N.\ Georgiou \at
              School of Mathematical and Physical Sciences, Department of Mathematics, University of Sussex, Falmer Campus, Brighton BN1 9QH, UK \\
              \email{n.georgiou@sussex.ac.uk}           
           \and
           F.\ Rassoul-Agha \at
              Mathematics Department, University of Utah, 155 South 1400 East, Salt Lake City, UT 84109, USA \\
              \email{firas@math.utah.edu}
           \and
           T.\ Sepp\"al\"ainen \at
              Mathematics Department, University of Wisconsin-Madison, Van Vleck Hall, 480 Lincoln Dr., Madison, WI 53706, USA \\
              \email{seppalai@math.wisc.edu}
}

\date{Received: August 2015 / Accepted: July 2016}

\maketitle

\begin{abstract}
 We study  the directed last-passage percolation model on the planar square  lattice  with nearest-neighbor steps and general i.i.d.\ weights on the vertices,  outside of  the class of exactly solvable models.   Stationary cocycles are constructed for this percolation model from queueing fixed points.  These cocycles serve as boundary conditions for stationary last-passage percolation,  solve  variational formulas that characterize limit shapes,  and   yield existence of  Busemann functions in directions where the shape has some regularity. 
In a sequel to this paper the cocycles are used to prove results about semi-infinite geodesics and the competition interface.

\keywords{Busemann function\and cocycle\and competition interface\and 
directed percolation\and geodesic\and last-passage percolation\and percolation cone\and queueing fixed point\and  variational formula.}
\subclass{60K35\and 65K37} 
\end{abstract}

\tableofcontents

\section{Introduction}

  We study nearest-neighbor  directed  last-passage percolation (LPP)  on the lattice $\Z^2$, also called the {\it corner growth model}.  Random i.i.d.\ weights $\{\w_x\}_{x\in\Z^2}$ are  used to define {\sl last-passage times}  $\Gpp_{x,y}$ between  lattice points $x\le y$ (coordinatewise ordering) in $\Z^2$ by 
 \be \Gpp_{x,y}=\max_{x_\centerdot}\sum_{k=0}^{n-1}\w_{x_k} 
	\label{Gxy1}\ee
where the maximum is over paths $x_\centerdot=\{x=x_0, x_1, \dotsc, x_n=y\}$  that satisfy  $x_{k+1}-x_k\in\{e_1,e_2\}$ (up-right paths whose increments are   standard basis vectors).  

 When $\w_x\ge 0$   this defines  a growth model in the first quadrant $\Z_+^2$.  Initially  the growing cluster    is   empty.   The origin joins the cluster at time $\w_0$.  After both  $x-e_1$ and $x-e_2$   have joined the cluster,  point $x$ waits time $\w_x$ to  join.  (However,  if  $x$ is on the boundary of $\Z_+^2$, only  one of $x-e_1$ and $x-e_2$ is required to  have joined.)  The cluster    at  time $t$  is   $\cC_t=\{x\in\Z_+^2:  \Gpp_{0,x}+ \w_x \le t\}$.       Our convention to exclude   $\w_{x_n}$ in \eqref{Gxy1} forces the  addition of $\w_x$ in the definition of $\cC_t$.  

The interest is in the large-scale behavior of the model.  This begins with  the deterministic limit  $\gpp(\xi)=\lim_{n\to\infty}  n^{-1}\Gpp_{0, \fl{n\xi}}$  for  $\xi\in\R_+^2$ under a moment assumption.     It is expected but not yet proved that   the shape function  $\gpp$ is differentiable.      Another  natural expectation is that  the increment process  $\{\Gpp_{0, v_n+x} -  \Gpp_{0, v_n} : x\in\Z^2\}$ converges in distribution as  $v_n\to\infty$ in a particular direction.  
Distributionally equivalent is to look at the limit of 
$ \Gpp_{0, v_n } - \Gpp_{x, v_n}$.   For this last  quantity  we can  expect even almost sure convergence.  The limit is called a  {\sl Busemann function}.    This is one  type of result developed in the paper.

Here are some particulars of what follows, in relation to past work.

In \cite{Geo-Ras-Sep-13a-} we derived variational formulas that characterize the limiting free energy and limit shape  for a general class of  polymer models. The results cover both positive temperature polymers and zero temperature percolation models, both point-to-point  and  point-to-line  models in all dimensions, and general admissible paths. 
Part of the theory of \cite{Geo-Ras-Sep-13a-}  is a solution  approach to one kind of  variational formula  in terms of stationary cocycles. 

 In the present paper we construct these cocycles that minimize in the variational formula for the  particular case of the planar corner growth model with general i.i.d.\ weights.  The weights are assumed  bounded from below and subject to a $2+\e$ moment bound.  The  construction of the cocycles comes from the fixed points of  the associated queueing operator whose existence   was proved by Mairesse and Prabhakar \cite{Mai-Pra-03}.      A Markov process analogy of the cocycle  construction  is a simultaneous construction  of a given interacting Markov process for  all invariant distributions, coupled through   common Poisson clocks that drive the dynamics.   The i.i.d.\ weights $\w$ are the analogue of the clocks and the cocycles are the analogues of the initial state variables in stationary distribution.

     The  construction  of the cocycles 
 does not require any regularity assumptions on $\gpp$.    But the cocycles are  indexed by the gradients $\nabla\gpp(\xi)$   as $\xi$ varies across directions in the first quadrant.    Consequently at a corner  $\xi$ of the shape  we get  two cocycles  that correspond to  $\nabla\gpp(\xi\pm)$.  We can prove the existence of the Busemann function  in directions $\xi$ where  $\nabla\gpp(\xi\pm)$ coincide and can be approximated by gradients from either side.   Such directions are either (i)  exposed points of differentiability  of $\gpp$ or  (ii)  lie on a linear segment of $\gpp$ whose endpoints are points of differentiability.

The companion paper  \cite{Geo-Ras-Sep-15b-}   uses the cocycles constructed here to prove results about the geodesics and the competition interface  of the corner growth model.  



  Under some moment and regularity assumptions on the weights, the corner growth  model   is expected to lie in the Kardar-Parisi-Zhang (KPZ)  universality class. (For a review of KPZ universality see \cite{Cor-12}.)   The fluctuations of $\Gpp_{0, \fl{n\xi}}$ are expected to have order of magnitude $n^{1/3}$ and  limit distributions from random matrix theory.     When the weights have exponential or geometric distribution the model is exactly solvable, and it is possible to derive 
   exact fluctuation exponents and limit distributions \cite{Bal-Cat-Sep-06,Joh-00,Joh-06}. 
  In these cases the  cocycles we construct  have explicit product form distributions.  
  The present paper can be seen as an attempt to begin development of techniques for studying the corner growth model beyond the exactly solvable cases.   As observed in  Remark \ref{rk:pc-wdo} in Section \ref{subsec:perc} below, the case of a percolation cone is exceptional  
  as it bears some markers of weak disorder and hence cannot obey   KPZ universality in all aspects.




Let us briefly touch upon   the last 20 years of research  on Busemann functions in percolation.  
The only    widely applied technique for proving the existence of Busemann functions  has come from the work of  Newman et al.\  \cite{How-New-01, Lic-New-96, New-95}.  The approach uses uniqueness and coalescence of directional geodesics and relies on a uniform curvature assumption on the limit shape.  This assumption cannot as yet be checked for general first or last-passage percolation models.  Still  there are many interesting processes  with  enough special  features so that  this approach can be carried out.  The examples are    exactly solvable directed lattice  percolation   and models in Euclidean space built on homogeneous Poisson processes.  

 W\"utrich \cite{Wut-02} gave an early application of the ideas of Newman et al.\ to construct Busemann functions for directed   last-passage  percolation on Poisson points.    Then came  the use of Busemann functions to study competition and coexistence  by Cator, Ferrari, Martin and Pimentel \cite{Cat-Pim-13,Fer-Pim-05, Fer-Mar-Pim-09, Pim-07},  existence, uniqueness and spatial mixing  of  invariant distributions of an interacting system by Cator and Pimentel \cite{Cat-Pim-12},  the  construction of solutions to a randomly forced Burgers equation by Bakhtin, Cator, and Khanin  \cite{Bak-Cat-Kha-14,Bak-15-}, 
 and   Pimentel's \cite{Pim-13-} bounds  on the speed of coalescence that improve those of  W\"utrich \cite{Wut-02}.   Distinct from this line  of work is Hoffman's use  of 
Busemann functions to study competition under more general weight distributions   \cite{Hof-05, Hof-08}.   


Our approach to Busemann functions (and to geodesics in the sequel  \cite{Geo-Ras-Sep-15b-})  is  the very opposite.     The limits are constructed a priori in the form of the   cocycles.  The cocycles define stationary   percolation models that can be  coupled with the original one.    The coupling,   ergodicity,  and local regularity of the limit shape  give the control that proves the Busemann limits.   

 The  use of Busemann functions or stationary cocycles  to create auxiliary  stationary percolation processes  has been very fruitful.    Hoffman applied this idea to study geodesics and coexistence  \cite{Hof-05}.  The stationary process became a tool for proving fluctuation exponents in the seminal work of Cator and Groeneboom \cite{Cat-Gro-06} on the planar Poisson last-passage percolation model.  The idea was adapted to the lattice case by Bal\'azs, Cator and Sepp\"al\"ainen \cite{Bal-Cat-Sep-06}, whose estimates have then been used by other works.  The stationary process of Busemann functions    
  was also profitably utilized in    \cite{Cat-Pim-12}.    In an earlier version of this technique the stationary process coupled to the  percolation model was an associated particle system \cite{Sep-98-mprf-2, Sep-98-mprf-1}.  
  
  The idea of deducing existence and uniqueness of stationary processes by studying   geodesic-like objects    has also been used in random dynamical systems.
For example, \cite{E-etal-00} and its extensions \cite{Hoa-Kha-03,Itu-Kha-03,Bak-07,Bak-Kha-10,Bak-13} take this approach to produce invariant measures for the Burgers equation with random forcing. 
This line of work has treated situations where the space is compact or essentially compact.  To make progress in the non-compact case,  the approach of Newman et al.\ was adopted again in \cite{Bak-Cat-Kha-14,Bak-15-}, as mentioned above.

The seminal results of   Newman et al.\  on geodesics  have recently been extended by Damron and Hanson   \cite{Dam-Han-14} by taking as starting point a   weak subsequential limit of Busemann functions.     These  weak Busemann  limits of  \cite{Dam-Han-14}  can  be regarded as a   counterpart of our stationary cocycles. 

To generalize our results beyond i.i.d.\ weights and potentially to higher dimensions, a  possible strategy that avoids the reliance on queueing theory would be to develop 
 sufficient   control on the gradients 
$\Gpp_{x,\fl{n\xi}}-\Gpp_{y,\fl{n\xi}}$ (or their point-to-line counterparts)  to construct cocycles through weak limits as $n\to\infty$.  This  worked well for undirected first-passage percolation in   \cite{Dam-Han-14} because  the gradients are uniformly integrable.   Note however that when $\{\w_x\}$ are only   ergodic, the limiting 
shape can  have corners and linear segments, and can even be a finite polygon.

 \smallskip

{\bf Organization of the paper.}  
Section \ref{sec:results} introduces the corner growth model.  
Section \ref{sec:bus-thm} states the existence theorem for Busemann functions   under   local  regularity   assumptions on  the limit function  $\gpp(\xi)$.   The remainder of the paper does not rely on properties of $\gpp$ beyond those known and stated in   Section \ref{sec:results}.  
Section \ref{sec:duality} develops a convex duality between directions or velocities $\xi$ and tilts or external fields $h$ that comes from  the relationship of the point-to-point and point-to-line percolation models.  
Section \ref{sec:cocycles}  states the existence and properties of the cocycles on which all the results of the paper are based.  The proof comes at the end of the paper in  Section  \ref{app:q}.   The cocycles are used to construct   a stationary last-passage process  (Section \ref{sec:stat-lpp})  and they  yield  minimizers for the variational formulas that characterize the limit shapes (Section \ref{subsec:varsol}). 
Section \ref{sec:busemann} proves the existence of  Busemann functions.

 Two particular cases of the corner growth model are addressed separately: the exactly solvable cases   with geometric or  exponential   weights $\{\w_x\}$ (Sections \ref{subsec:solv} and \ref{sec:solv}), and the percolation cone  (Section \ref{subsec:perc}). In the case of the percolation cone we can give an alternative  formula for the limiting   Busemann function and we observe that the centered Busemann function is a gradient.  This is consistent with the percolation cone situation being in weak disorder, as also indicated by the vanishing of the fluctuation exponent for the point-to-line last-passage time.  
 
 We have organized the paper so that the results on Busemann functions lead and the technicalities of cocycle construction are delayed towards the end.   The reader who wants to see each step fully proved before progressing further can go through  the material in the following order.   (i)  Basic definitions from  Sections \ref{sec:results} and  \ref{sec:duality}.   (ii)    The cocycle construction:   Theorem \ref{th:cocycles}  from Section  \ref{sec:exist} and its proof from Section  \ref{app:q}.   (iii)  The stationary LPP from Section  \ref{sec:stat-lpp}, followed by Theorem \ref{th:construction} for Busemann functions  and its proof in Section \ref{sec:busemann}.   (iv)  For minimizers of the variational formulas,  Theorem \ref{th:var-sol} and its proof from Section \ref{subsec:varsol}.   (v) The rest is specialization under  assumptions on the weights or the limit shape.


A short 
Appendix \ref{app:aux} states an ergodic theorem for cocycles proved in \cite{Geo-etal-15-}. 

\smallskip

{\bf Notation and conventions.}   $\R_+=[0,\infty)$,  $\Z_+=\{0,1,2,3, \dotsc\}$,  $\N=\{1,2,3,\dotsc\}$. The standard basis vectors of $\R^2$ are  $e_1=(1,0)$ and $e_2=(0,1)$ and  the $\ell^1$-norm of $x\in\R^2$  is   $\abs{x}_1=\abs{x\cdot e_1} + \abs{x\cdot e_2}$.   For $u,v\in\R^2$ a  closed line segment on $\R^2$ is denoted by   $[u,v]=\{tu+(1-t)v:  t\in[0,1]\}$, and an open line segment by  $]u,v[=\{tu+(1-t)v:  t\in(0,1)\}$.    Coordinatewise ordering $x\le y$ means that $x\cdot e_i\le y\cdot e_i$ for both $i=1$ and $2$.  Its negation $x\not\le y$ means that   $x\cdot e_1> y\cdot e_1$ or   $x\cdot e_2> y\cdot e_2$.    An admissible or up-right path $x_{0,n}=(x_k)_{k=0}^n$  on $\Z^2$ satisfies $x_k-x_{k-1}\in\{e_1,e_2\}$.  

The  environment space is $\Omega=\R^{\Z^2}$ whose elements are denoted by $\w$.  Elements of a larger product space $\Ombig=\Omega\times\Omega'$   are denoted by $\what=(\w, \w')$.   
Parameter  $\pp>2$ appears in a  moment hypothesis  $\E[|\w_0|^{\pp}]<\infty$,  while $p_1$ is the  probability of an open site in an oriented site percolation process.   $X\sim\mu$ means that random variable $X$ has distribution $\mu$. 
 
$\Diff$ is the set of points of differentiability of $\gpp$ and $\EP$ the set of exposed points of differentiability.   
A statement that  contains $\pm$ or $\mp$ is a combination of two statements: one for the top choice of the sign and another one for the bottom choice.  


\section{Preliminaries on the corner growth model} 
\label{sec:results}

This section presents assumptions,  definitions, and notation used throughout the paper, and states some known results.  

\subsection{Assumptions} 
The two-dimensional  corner growth model is the last-passage percolation model on the planar square lattice $\Z^2$ with admissible steps   $\range=\{e_1,e_2\}$.  
    $\Omega=\R^{\Z^2}$ is the   space of   environments  or weight configurations $\w=(\w_x)_{x\in\Z^2}$.   The group of spatial translations   $\{T_x\}_{x\in\Z^2}$  acts on $\Omega$ by $(T_x\w)_y=\w_{x+y}$ for $x,y\in\Z^2$.   Let $\kS$ denote the Borel $\sigma$-algebra of $\Omega$.  $\P$ is  a Borel probability measure    on $\Omega$  under which the weights  $\{\w_x\}$  are  independent, identically distributed (i.i.d.) nondegenerate random variables with a $2+\e$ moment.      Expectation under $\P$ is denoted by $\E$.   For a technical reason we also assume $\P(\w_0\ge c)=1$ for some finite constant $c$.
    
For future reference we summarize our standing assumptions in this statement: 
	\be\begin{aligned} \label{2d-ass}
		&\text{$\P$ is i.i.d., \, $\E[|\w_0|^{\pp}]<\infty$ for some $\pp>2$,\,     $\sigma^2=\Vvv(\w_0)>0$, and }\\
		 &\text{$\P(\w_0\ge c)=1$ for some   $c>-\infty$.} 
	\end{aligned} 
	\ee  
Assumption \eqref{2d-ass} is valid throughout the paper and will not be repeated in  every statement.  	The constant 
\[\Ew=\E(\w_0)\]  will appear  frequently.  	The symbol   $\w$ is reserved for   these $\P$-distributed  i.i.d.\ weights, also later when they are embedded in a larger configuration $\what=(\w, \w')$.  

  Assumption $\P(\w_0\ge c)=1$ is used explicitly   only   in Section  \ref{app:q} where we rely on  queueing theory.   In that context $\w_x$ is a service time, and the results we quote  have been proved only for $\w_x\ge 0$.  (The extension to $\w_x\ge c$  is immediate.)   The key point   is that if the queueing results are extended to general real-valued  i.i.d.\ weights $\w_x$ subject to the moment assumption in \eqref{2d-ass},  everything in this paper is true  for these general real-valued weights.  
  
  
  \subsection{Last-passage percolation} 
Given an environment $\w$ and two points $x,y\in\Z^2$ with $x\le y$ coordinatewise,
define the {\sl point-to-point last-passage time} by
	\[ \Gpp_{x,y}=\max_{x_{0,n}}\sum_{k=0}^{n-1}\w_{x_k}.\]
The maximum is over paths $x_{0,n}=(x_k)_{k=0}^n$  that start at  $x_0=x$,  end at $x_n=y$ with $n=\abs{y-x}_1$,  and have increments $x_{k+1}-x_k\in\{e_1,e_2\}$.  Call such paths {\sl admissible} or {\sl up-right}.

Given a vector $h\in\R^2$, an environment $\w$, and an integer $n\ge0$, define the  $n$-step  {\sl point-to-line last-passage time} with {\sl tilt} (or {\sl external field}) $h$ by
 	\be\label{Gnh} \Gpl_n(h)=\max_{x_{0,n}}\Bigl\{\,\sum_{k=0}^{n-1}\w_{x_k}+h\cdot x_n\Bigr\}. \ee 
The maximum is over all admissible $n$-step   paths that start at $x_0=0$.

 It is standard (see for example \cite{Mar-04} or \cite{Ras-Sep-14})   that under assumption \eqref{2d-ass},    for $\P$-almost every $\w$, simultaneously  for every $\xi\in\R_+^2$ and every $h\in\R^2$, 
the following  limits exist:
		\begin{align}
		 &\gpp(\xi)=\lim_{n\to\infty}n^{-1}\Gpp_{0,\lfloor{n\xi}\rfloor},\label{eq:g:p2p}\\
		 &\gpl(h)=\lim_{n\to\infty}n^{-1}\Gpl_n(h).\label{eq:g:p2l}
		\end{align}
In the definition above  integer parts are taken coordinatewise: $\fl v=(\fl{a},\fl{b}) \in\Z^2$ for $v=(a,b) \in\R^2$.   A stronger result is also true: the shape theorem gives a uniform limit (Theorem 5.1(i) of \cite{Mar-04}):  
\begin{align}\label{lln5}
\lim_{n\to\infty} n^{-1}\max_{x\in\Z_2^+:\abs{x}_1=n}\abs{\Gpp_{0,x}-\gpp(x)}=0\quad\text{$\P$-almost surely}.
\end{align}

 Under assumption \eqref{2d-ass} $\gpp$  and $\gpl$  are finite nonrandom continuous functions.  In particular, $\gpp$  is continuous up to the boundary of $\R_+^2$.   Furthermore,   $\gpp$ is a symmetric, concave,  $1$-homogeneous function on $\R_+^2$  and $\gpl$ is a convex Lipschitz function on $\R^2$. Homogeneity means that    $\gpp(c\xi)=c\gpp(\xi)$ for $\xi\in\R_+^2$ and $c\in\R_+$.    By homogeneity, for most purposes it suffices to consider $\gpp$ as a function  on  the convex hull    $\Uset=[e_1,e_2]=\{te_1+(1-t)e_2: 0\le t\le1\}$ of $\range$.    The relative interior $\ri\Uset$ is the open line segment $]e_1,e_2[=\{te_1+(1-t)e_2:  0<t<1\}$.  


Decomposing according to the endpoint of the path and some estimation (Theorem 2.2 in \cite{Ras-Sep-14}) give
 	\be\label{h-xi} \gpl(h)=\sup_{\xi\in\Uset}\{\gpp(\xi)+h\cdot\xi\}.\ee
By convex duality   for $\xi\in\ri\Uset$
	\[\gpp(\xi)=\inf_{h\in\R^2}\{\gpl(h)-h\cdot\xi\}.\] 
Let us  say $\xi\in\ri\Uset$ and $h\in\R^2$ are {\sl dual} if  
	\begin{align}\label{eq:duality} \gpp(\xi)=\gpl(h)-h\cdot\xi.\end{align}

Very little  is known in general about $\gpp$ beyond the soft properties mentioned above.  In the    exactly solvable case, with $\w_x$ either exponential or geometric,    
we have $\gpp(s,t)=(s+t)\Ew+2\sigma\sqrt{st}$. 
The Durrett-Liggett flat edge result  (\cite{Dur-Lig-81}, Theorem \ref{th:flat-edge} below) tells us that  this formula is   not true for all i.i.d.\ weights.   
It does  hold for general weights asymptotically at the boundary \cite{Mar-04}:  $\gpp(1,t)=\Ew+2\sigma\sqrt{t} + o(\sqrt t\,)$
as $t\searrow 0$.  

\subsection{Gradients and convexity} 
Regularity properties of $\gpp$ play a role in  our results,  so we   introduce notation for that purpose.   
Let 
\[  \Diff=\{\xi\in\ri\Uset:\text{ }\gpp\text{ is differentiable at $\xi$}\}.\]
To be   clear,  $\xi\in\Diff$ means that the gradient $\nabla\gpp(\xi)$
 exists in the usual sense of differentiability of functions of several variables.  
 At $\xi\in\ri\Uset$ this is equivalent to the differentiability of the single variable 
 function $r\mapsto \gpp(r,1-r)$ at $r={\xi\cdot e_1}/{\abs{\xi}_1}$.  
 By concavity the set $(\ri\Uset)\smallsetminus\Diff$ is at most countable.
 
 A point $\xi\in\ri\Uset$ is an {\sl exposed point} if 
 	\be \label{eq:epod}	\begin{aligned}
\exists v\in\R^2 \; \text{ such that } \;	   \forall \zeta\in\ri\Uset\smallsetminus \{\xi\}: \; \gpp(\zeta)\; <\; \gpp(\xi) + v\cdot(\zeta - \xi) .   \end{aligned}\ee
The set  of {\sl exposed points of differentiability} of $\gpp$ is  
$\EP=  \{ \xi\in\Diff:  \text{\eqref{eq:epod} holds}\}$.    For $\xi\in\EP$, \eqref{eq:epod} is uniquely satisfied by $v=\nabla\gpp(\xi)$.  
The condition for an exposed point is formulated entirely in terms of $\Uset$ because  $\gpp$  is 
a homogeneous function and therefore cannot have exposed points as a function on $\R_+^2$. 	
		
It is expected that $\gpp$ is differentiable on all of $\ri\Uset$.  
But since this is not known, our development must handle 
possible points of nondifferentiability.   For this purpose we take left and right
limits on $\Uset$. Our convention is that a {\sl left limit} $\xi\to\zeta$ on $\Uset$ means that 
$\xi\cdot e_1$ increases to $\zeta\cdot e_1$, while in a {\sl right limit}   
$\xi\cdot e_1$ decreases to $\zeta\cdot e_1$, with $\xi\ne\zeta$.  
  
For $\zeta\in\ri\Uset$ define one-sided gradient vectors $\nabla\gpp(\zeta\pm)$ by
\begin{align*}
\nabla\gpp(\zeta\pm)\cdot e_1&=\lim_{\e\searrow0}\frac{\gpp(\zeta\pm\e e_1)-\gpp(\zeta)}{\pm\e} \\  \text{and}\quad 
\nabla\gpp(\zeta\pm)\cdot e_2&=\lim_{\e\searrow0}\frac{\gpp(\zeta\mp\e e_2)-\gpp(\zeta)}{\mp\e}.
\end{align*}
Concavity of $\gpp$ ensures  the  limits   exist.   $\nabla\gpp(\xi\pm)$ coincide (and equal $\nabla\gpp(\xi)$) 
 if and only if $\xi\in\Diff$.  
Furthermore,  on $\ri\Uset$, 
\begin{align}\label{nabla-g-lim}  
\nabla\gpp(\zeta-)=\lim_{\xi\cdot e_1\nearrow\zeta\cdot e_1} \nabla\gpp(\xi\pm)
\quad\text{and}\quad 
\nabla\gpp(\zeta+)=\lim_{\xi\cdot e_1\searrow\zeta\cdot e_1} \nabla\gpp(\xi\pm). 
\end{align}
  
For $\xi\in\ri\Uset$ define maximal  line segments on which $\gpp$ is linear,  $\Uset_{\xi-}$ for the left gradient at $\xi$ 
and $\Uset_{\xi+}$ for the right gradient at $\xi$,  
by 
	\begin{align}\label{eq:sector1}
\Uset_{\xi\pm}=\{\zeta\in\ri\Uset: 	\gpp(\zeta)-\gpp(\xi)=\nabla g(\xi\pm)\cdot(\zeta-\xi)\}.  
	\end{align}
Either or both segments can degenerate to a point. 
Let \be\label{eq:sector2} \Uset_\xi=\Uset_{\xi-}\cup\,\Uset_{\xi+}=[\ximin, \ximax]
\qquad\text{with $\ximin\cdot e_1\le \ximax\cdot e_1$.}
\ee
  If $\xi\in\Diff$ then $\Uset_{\xi+}=\Uset_{\xi-}=\Uset_\xi$, while if $\xi\notin\Diff$ then $\Uset_{\xi+}\cap\Uset_{\xi-}=\{\xi\}$.  If $\xi\in\EP$ then $\Uset_{\xi}= \{\xi\}$.     Figure \ref{fig:defs} illustrates.
  
For $\zeta\cdot e_1<\eta\cdot e_1$ in $\ri\Uset$,  $[\zeta, \eta]$ is a     
  {\sl maximal linear segment} of  $\gpp$   if $\nabla\gpp$ exists and  is constant in $]\zeta, \eta[$  but not on any strictly larger open line segment in $\ri\Uset$.  
Then    $[\zeta, \eta]=\Uset_{\zeta+}=\Uset_{\eta-}=\Uset_\xi$ for any $\xi\in\;]\zeta, \eta[$.   If $\zeta, \eta\in\Diff$ we say that $\gpp$ is differentiable at the endpoints of this maximal linear segment.  This  hypothesis will be invoked several times.

  \begin{figure}[h]	
	\begin{center}
		\begin{tikzpicture}[>=latex,scale=0.74]

	\draw[-](0,0)--(11,0)node[below]{$\Uset$};
		
	\draw[ line width = 1.3 pt, color=nicosred] (4, 7)node[right,color=black]{$ $}--(6,5.5)--(8,4)--(10,2.5);
		
\draw[ line width = 1.2 pt, color=nicosred] (3, 5.5)--(5,5.5)--(7,5.5) node[color=black, below right]{$ $}--(10,5.5);

		\draw[line width = 1.3pt] (0,.9)--(0,1) .. controls (0.2,1.5)..(2,3)..controls (5,5.5)..(6,5.5) -- (8,4)..controls (9.5,2)..(10,.9); 
		
		\draw[dashed, line width = 1.1pt] (6,5.5)--(6,0)node[below]{$\xi = \ximin$};
		\draw[dashed, line width = 1.1pt] (8,4)--(8,0)node[below]{$\ximax$};
		
		\draw[line width=3pt, color= nicosred] (6,0)--(8,0);
		
		\shade[ball color=red](8,4)circle(1.5mm);
		\shade[ball color=red](6,5.5)circle(1.5mm); 
		
		\draw [decorate,decoration={brace, amplitude = 0.3cm},xshift=0pt,yshift=5, line width=1.2pt]
						(6.1,0) --(7,0)-- (7.9,0);
		\draw(7,.44)node[above]{$\Uset_{\xi+}$};
		
				
		\draw[dashed, line width = 1.1pt] (.5,1.7)--(.5,0)node[below]{$ \zetamin$};
		\draw[dashed, line width = 1.1pt] (4.6,5.1)--(4.6,0)node[below]{$\zetamax$};
		\draw[dashed, line width = 1.1pt] (2.4,3.3)--(2.4,1);
		\draw[dashed, line width = 1.1pt] (2.4,.3)--(2.4,0)node[below]{$\zeta$};
		
		\shade[ball color=red](.5,1.7)circle(1.5mm);
		\shade[ball color=red](4.6,5.1 )circle(1.5mm); 
		\shade[ball color=red](2.4, 3.3 )circle(1.5mm); 
		
		\draw [decorate,decoration={brace, amplitude = 0.3cm},xshift=0pt,yshift=5, line width=1.2pt]
						(.5,0) -- (4.6,0);
		\draw(2.5,.44)node[above]{$\Uset_{\zeta+} = \Uset_{\zeta} = \Uset_{\zeta-}  $};
		
		\draw[line width=3pt, color= nicosred] (.5,0)--(4.6,0);	
		
		\end{tikzpicture}
	\end{center}
	\caption{\small    A graph of a concave function over $\Uset$ to illustrate the definitions.  
	   $\zetamin$, $\zeta$ and $\zetamax$ are points of differentiability while $\ximin = \xi$ and  $\ximax$ are not.   $\Uset_{\zetamin}=\Uset_\zeta=\Uset_\zetamax=[\zetamin,\zetamax]$.
	    The red lines represent supporting hyperplanes at $\xi$.   The slope from the left at $\xi$ is zero, and the  horizontal red line touches the graph only at $\xi$.  Hence  $ \Uset_{\xi-} = \{\xi\}$.  	Points on  the line segments $[\zetamin, \zetamax]$ and $]\xi, \ximax[$ are   not exposed. 
  $\cE=\ri\Uset \smallsetminus \bigl([\zetamin, \zetamax] \cup[\xi, \ximax]\bigr).$}
  \label{fig:defs}
\end{figure}



\subsection{Cocycles and variational formulas} 
The next definition is central to the paper.  

\begin{definition}[Cocycle]
\label{def:cK}
	A measurable function $B:\Omega\times\Z^2\times\Z^2\to\R$ is   a {\rm stationary $L^1(\P)$  cocycle}
	if it satisfies the following three conditions. 
		\begin{enumerate}[\ \ \rm(a)]
			\item\label{def:cK:int} Integrability: for each $z\in\{e_1,e_2\}$, $\E\abs{B(0,z)} <\infty$.
			\item\label{def:cK:stat} Stationarity:  for $\P$-a.e.\ $\w$ and  all $x,y,z\in\Z^2$, 
					$B(\w, z+x,z+y)=B(T_z\w, x,y)$.
			\item\label{def:cK:coc} Additivity: for $\P$-a.e.\ $\w$ and  all $x,y,z\in\Z^2$, $B(\w, x,y)+B(\w, y,z)=B(\w, x,z)$.
		\end{enumerate}
	The space  of stationary $L^1(\P)$  cocycles on $\OBP$  is denoted by $\Coc(\Omega)$.  
	The subspace   $\Coc_0(\Omega)$ of {\rm centered} stationary $L^1(\P)$  cocycles   consists of $F\in\Coc(\Omega)$ such that 
  $\E[F(x,y)]=0$ for all $x,y\in\Z^2$.
\end{definition}
$\Coc_0(\Omega)$ is the $L^1(\P)$ closure of gradients 
$F(\w,x,y)=\varphi(T_y\w)-\varphi(T_x\w)$, $\varphi\in L^1(\P)$ (see 
\cite[Lemma~C.3]{Ras-Sep-Yil-13}).     Our convention for  centering a 
  stationary $L^1$ cocycle $B$  is to let   
 $h(\B)\in\R^2$ denote the vector  that   satisfies 
 	\be 	
		\E[\B(0,e_i)]=-h(\B)\cdot e_i \qquad\text{for  $i\in\{1,2\}$ }  \label{eq:EB}
	\ee
and then define $F\in\Coc_0(\Omega)$ by 
	\be 
		F(\w,x,y)=  h(\B)\cdot (x-y)-\B(\w, x,y).  \label{FF}
	\ee
 
 Cocycles appear as minimizers  in variational formulas that describe the limits of last-passage percolation models.    
 In Theorems 3.2 and 4.3 in \cite{Geo-Ras-Sep-13a-} we proved these variational formulas:  for $h\in\R^2$ 
	\begin{align}
		\gpl(h)&=\inf_{F\in\Coc_0(\Omega)}\; \P\text{-}\esssup_\w\;  \max_{i\in\{1,2\}} \{\w_0+h\cdot e_i+F(\w, 0,e_i)\}\label{eq:g:K-var}\\
		\intertext{and for $\xi\in\ri\Uset$}  
		\gpp(\xi)&=\inf_{B\in\Coc(\Omega)} \;\P\text{-}\esssup_\w\;  \max_{i\in\{1,2\}} \{\w_0 -B(\w, 0,e_i)-h(B)\cdot\xi\}.\label{eq:gpp:K-var}
	\end{align}


\medskip

\section{Results on Busemann functions} \label{sec:bus-thm} 
This section utilizes specialized assumptions either on the shape function $\gpp$ or on the weights, in addition to the basic assumption \eqref{2d-ass}.  
We  state the theorem on the existence of Busemann functions, both point-to-point and point-to-line.   This theorem is proved in Section \ref{sec:busemann}.   After the theorem we discuss two examples: the exactly solvable case and the percolation cone.   

\begin{theorem}\label{thm:buse}  
 Fix two points $\zeta, \eta\in\Diff$ such that $\zeta\cdot e_1\le\eta\cdot e_1$.  Assume that  either 
 
 {\rm(i)}  $\zeta=\eta=\xi\in\EP$ in which case  $\zeta=\eta=\xi=\ximin= \ximax$,   or that 
 
 {\rm(ii)}  $[\zeta, \eta]$ is a maximal linear segment of $\gpp$ in which case  
 $[\zeta, \eta]=[\ximin, \ximax]$ for all $\xi\in[\zeta, \eta]$.  
 
Then there exists a stationary $L^1(\P)$ cocycle $\{B(x,y):x,y\in\Z^2\}$ and an event  $\Omega_0$ with $\P(\Omega_0)=1$ such that  the following holds for each 
$\w\in\Omega_0$.    For each sequence $v_n\in\Z_+^2$ such that 
\begin{align}\label{eq:vn}
\abs{v_n}_1\to\infty\quad\text{and}\quad\zeta\cdot e_1\le\varliminf_{n\to\infty}\frac{v_n\cdot e_1}{\abs{v_n}_1}\le\varlimsup_{n\to\infty}\frac{v_n\cdot e_1}{\abs{v_n}_1}\le\eta\cdot e_1,
\end{align}
we have the limit
\be \label{eq:grad:coc1} 
							B(\w, x,y) = \lim_{n\to \infty} \big( \Gpp_{x, v_n}(\w) - \Gpp_{y, v_n}(\w) \big)   \qquad  \text{ for  $x,y\in\Z^2$. } 
						\ee 
 Furthermore,   if  $h=t(e_1+e_2)-\nabla\gpp(\xi)$  for some  $t\in\R$ and any {\rm(}and hence all{\rm)} $\xi\in[\zeta, \eta]$,  we have the limit 
 \be \label{p2lB-1} 
	B(\w, 0, z) +h\cdot z= \lim_{n\to \infty} \big( \Gpl_{n}(h)-\Gpl_{n-1}(h)\circ T_z \big)  
	\qquad  \text{for $z\in\{e_1,e_2\}$.}  
						\ee 
						
 The mean of the limit is given by 
\be\label{EB=Dg}     \nabla \gpp(\xi) = \bigl( \, \E[B(x,x+e_1)]\,,\,   \E[B(x,x+e_2)] \,\bigr)\quad\text{for all $\xi\in[\zeta, \eta]$}.
\ee 
\end{theorem}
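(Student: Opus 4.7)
The cocycle $B$ asserted in the theorem is supplied by Theorem~\ref{th:cocycles}, which constructs for each $\xi\in\ri\Uset$ stationary $L^1$ cocycles $B^{\xi\pm}$ with means $-\nabla\gpp(\xi\pm)$. Under the hypotheses the one-sided gradients at $\zeta$ and $\eta$ coincide and $\nabla\gpp$ is constant on $[\zeta,\eta]$, so $B^{\xi-}$ and $B^{\xi+}$ agree $\P$-a.s.\ and are independent of $\xi\in[\zeta,\eta]$; call the common cocycle $B$. Formula \eqref{EB=Dg} is then immediate from the mean identity for $B^{\xi\pm}$.

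To prove \eqref{eq:grad:coc1} the plan is a sandwich by auxiliary cocycles. Pick sequences $\xi^{(k),\pm}\in\Diff$ with $\xi^{(k),-}\cdot e_1\nearrow\zeta\cdot e_1$ and $\xi^{(k),+}\cdot e_1\searrow\eta\cdot e_1$ as $k\to\infty$. Theorem~\ref{th:cocycles} provides a joint monotone coupling of $\w$ with all the cocycles $\{B^{\xi^{(k),\pm}}\}_k$ and $B$ in which $B^{\xi^{(k),-}}(\w,x,x+e_1)\le B(\w,x,x+e_1)\le B^{\xi^{(k),+}}(\w,x,x+e_1)$, and the reverse inequality for $e_2$-increments. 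Using these cocycles as boundary data on axes through an anchor point far to the southwest, construct stationary LPPs $G^{\pm}_k$ as in Section~\ref{sec:stat-lpp}. The sandwich proceeds in two steps:
\begin{enumerate}[(i)]
\item (Exit-point analysis.) Since $v_n/|v_n|_1$ is eventually strictly separated on one side from each characteristic direction $\xi^{(k),\pm}$, the backward geodesic from $v_n$ in $G^-_k$ exits the southern boundary axis and that in $G^+_k$ exits the western axis. On its exit axis, the additivity of $B^{\xi^{(k),\pm}}$ identifies the stationary gradient as the a.s.\ equality $G^{\pm}_k(x,v_n)-G^{\pm}_k(y,v_n)=B^{\xi^{(k),\pm}}(\w,x,y)$, once $n$ is large enough that the exit point sits past both $x$ and $y$.
\item (Comparison with $\Gpp$.) The exit-point characterization combined with the optimality of stationary versus bulk-only paths in the respective LPPs gives $G^-_k(x,v_n)-G^-_k(y,v_n)\le \Gpp_{x,v_n}-\Gpp_{y,v_n}\le G^+_k(x,v_n)-G^+_k(y,v_n)$ for all $n$ large.
\end{enumerate}
Together (i) and (ii) sandwich $\Gpp_{x,v_n}-\Gpp_{y,v_n}$ between $B^{\xi^{(k),-}}(\w,x,y)$ and $B^{\xi^{(k),+}}(\w,x,y)$ in the $\liminf$/$\limsup$. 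Sending $k\to\infty$, the differentiability of $\gpp$ at $\zeta,\eta$ forces $\E B^{\xi^{(k),\pm}}(0,e_i)\to -\nabla\gpp(\xi)\cdot e_i=\E B(0,e_i)$, and by the monotone coupling both pathwise bounds converge a.s.\ to the common value $B(\w,x,y)$, proving \eqref{eq:grad:coc1}.

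The point-to-line limit \eqref{p2lB-1} follows from the duality \eqref{h-xi}--\eqref{eq:duality}: the hypothesis $h=t(e_1+e_2)-\nabla\gpp(\xi)$ makes $[\zeta,\eta]$ the set of maximizers in \eqref{h-xi}, so any near-optimizer $x$ in $\Gpl_n(h)=\max_{|x|_1=n}\{\Gpp_{0,x}+h\cdot x\}$ has $x/n$ asymptotically confined to $[\zeta,\eta]$. Feeding this into the single-step recursion relating $\Gpl_n(h)$ to $\Gpl_{n-1}(h)\circ T_z$, the already-established point-to-point limit yields \eqref{p2lB-1}.

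The principal obstacle is step (i): showing that in each perturbed stationary LPP the geodesic really exits the correct boundary axis past $x,y$ with probability one, for all large $n$. This depends on a quantitative separation between $\xi^{(k),\pm}$ and the target direction combined with shape-theorem control on boundary-axis fluctuations, both enabled by the $2+\e$ moment assumption. The differentiability of $\gpp$ at $\zeta,\eta$ is the structural hypothesis that forces $\nabla\gpp(\xi^{(k),\pm})\to\nabla\gpp(\xi)$ as $k\to\infty$; without it, the two sides of the sandwich would close to the distinct one-sided cocycles $B^{\xi\pm}$ rather than to a common $B$.
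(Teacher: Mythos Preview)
Your overall strategy matches the paper's: sandwich the bulk increments between cocycles $B^{\xi^{(k),\pm}}$ taken at directions flanking $[\zeta,\eta]$, use stationary LPP and an exit-point lemma to realize those cocycles as increments, then close the sandwich via the monotone coupling and continuity of the mean as $\xi^{(k),\pm}\to\zeta,\eta$. The point-to-line argument via duality and localization of near-maximizers is also the paper's.

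Two points need correcting. First, your stationary LPP is set up backwards. You place cocycle boundaries ``on axes through an anchor point far to the southwest,'' but the construction in Section~\ref{sec:stat-lpp}---and the one the proof requires---puts the cocycle on the \emph{north and east} axes through the moving terminal point $v_n$ (or $v_n+e_1+e_2$), computing $\Gne_{x,v_n}$ backward. The southwest version would need the boundary cocycle to be independent of bulk weights to its northeast, which Theorem~\ref{th:cocycles}\eqref{th:cocycles:indep} does not provide; as the remark after Section~\ref{sec:stat-lpp} notes, that direction is only known to work in the exactly solvable cases. Second, your step~(ii) does not follow from ``optimality of stationary versus bulk-only paths'': that gives a one-sided inequality on passage times, not on increments. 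The paper obtains the increment comparison via Lemma~\ref{lm:new:comp0}, a monotonicity of LPP gradients under shifts of the endpoint, applied after augmenting the weight field by a single row of cocycle values on the north (and, symmetrically, a column on the east). The paper also splits the argument into two steps---first for deterministic sequences $\lfloor n\xi\rfloor$, then bootstrapping to arbitrary $v_n$ satisfying \eqref{eq:vn} via another application of Lemma~\ref{lm:new:comp0}---which your sketch elides. With these fixes, your proof is the paper's.
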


To paraphrase the theorem,   suppose  $\xi$ is an exposed point of differentiability of $\gpp$,  or $\xi$ lies on a maximal linear segment of $\gpp$ whose endpoints are points of differentiability.  Then a Busemann function $B^\xi$ exists in direction $\xi$ in the sense that   $B^\xi(\w,x,y)$ equals  the a.s.\ limit  in \eqref{eq:grad:coc1}   for any sequence $v_n/\abs{v_n}_1\to\xi$ with $\abs{v_n}_1\to\infty$.    Furthermore, the $B^\xi$'s match for points $\xi$ on  maximal linear segments of $\gpp$ with endpoints in $\Diff$.  
The condition on $h$ in the theorem is exactly that $h$ and $\xi$ are dual in the sense of \eqref{eq:duality}.  
 


   We shall not derive the  cocycle properties of $B$  from the limit \eqref{eq:grad:coc1}.  Instead   we construct a family of cocycles on an extended   space $\Ombig=\Omega\times\Omega'$ and show that  one  of these   cocycles equals the limit on the right of \eqref{eq:grad:coc1}.

The Busemann limits \eqref{eq:grad:coc1}  can also be interpreted as convergence of the last-passage process  to a  stationary last-passage process, described in Section \ref{sec:stat-lpp}.

	Equation \eqref{EB=Dg}  
  was anticipated in \cite{How-New-01} (see paragraph after the proof of Theorem 1.13)   for  Euclidean first passage percolation (FPP) where $\gpp(x,y)=c\sqrt{x^2+y^2}$. 
    	A version of this formula appears also in Theorem 3.5 of \cite{Dam-Han-14} for lattice  FPP.

The next theorem states that the Busemann functions found in Theorem \ref{thm:buse} give minimizing cocycles.  	
	
	\begin{theorem}\label{thm:var-buse}  Let  $\xi\in\ri\Uset$ with  $\Uset_\xi =[\ximin, \ximax]$   defined in \eqref{eq:sector2}.
 Assume that $\ximin, \xi, \ximax\in\Diff$.  Let $B^\xi\in\Coc(\Omega)$ be the limit in  \eqref{eq:grad:coc1} for any sequence $v_n$ that satisfies \eqref{eq:vn} for $\zeta=\ximin$ and $\eta= \ximax$.  We have  $h(B^\xi)=-\nabla\gpp(\xi)$ by \eqref{EB=Dg} and \eqref{eq:EB}.  Define   $F(x,y)=h(B^\xi)\cdot(x-y)-B^\xi(x,y)$ as in  \eqref{FF}.  
 
 \begin{enumerate}[\ \ \rm(i)]
 \item\label{thm:var-buse:i}   Let $h=h(B^\xi)+(t,t)$ for some $t\in\R$.  Then for $\P$-a.e.~$\w$ 
\be\label{var15}  \gpl(h)=  \max_{i\in\{1,2\}} \{\w_0+h\cdot e_i+F(\w, 0,e_i)\} = t. \ee
In other words,  $F$ is a minimizer in \eqref{eq:g:K-var} and the essential supremum vanishes.  
  
 \item\label{thm:var-buse:ii}   For $\P$-a.e.~$\w$ 
\be\label{var151}\gpp(\xi)=  \max_{i\in\{1,2\}} \{\w_0 -B^\xi(\w, 0,e_i)-h(B^\xi)\cdot\xi\} = -h(B^\xi)\cdot \xi.  
\ee
In other words,  $B^\xi$ is a minimizer in \eqref{eq:gpp:K-var} and the essential supremum vanishes.  
\end{enumerate}
\end{theorem}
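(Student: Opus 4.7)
The plan is to reduce both parts of the theorem to a single pointwise identity satisfied by the Busemann cocycle, namely
\[
\min\bigl(B^\xi(\w,0,e_1),\,B^\xi(\w,0,e_2)\bigr)=\w_0 \qquad \P\text{-a.s.,}
\]
or equivalently $\max_{i\in\{1,2\}}\{\w_0-B^\xi(\w,0,e_i)\}=0$ $\P$-a.s. To derive this, I would start from the one-step last-passage recursion
\[
\Gpp_{0,v_n}(\w)=\w_0+\max\bigl(\Gpp_{e_1,v_n}(\w),\,\Gpp_{e_2,v_n}(\w)\bigr),
\]
valid for every $n$ large enough that $v_n\ge e_1+e_2$, and rewrite it as
\[
\Gpp_{0,v_n}-\Gpp_{e_i,v_n}=\w_0+\bigl(\Gpp_{e_{3-i},v_n}-\Gpp_{e_i,v_n}\bigr)^+\qquad(i\in\{1,2\}).
\]
Fix any $v_n$ satisfying \eqref{eq:vn} with $\zeta=\ximin$ and $\eta=\ximax$. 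Theorem~\ref{thm:buse} produces a single event of full probability on which the limits \eqref{eq:grad:coc1} hold simultaneously for all $x,y\in\Z^2$, so letting $n\to\infty$ in the last display yields
\[
B^\xi(\w,0,e_i)=\w_0+\bigl(B^\xi(\w,0,e_i)-B^\xi(\w,0,e_{3-i})\bigr)^+.
\]
Splitting by the sign of $B^\xi(\w,0,e_1)-B^\xi(\w,0,e_2)$ shows that in either case the smaller of the two Busemann increments equals $\w_0$, which proves the identity.

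Part~\eqref{thm:var-buse:ii} is then immediate: subtracting the deterministic constant $h(B^\xi)\cdot\xi$ inside the maximum in \eqref{var151} shows that the maximum equals $-h(B^\xi)\cdot\xi$ on the full-measure event above. Combining $h(B^\xi)=-\nabla\gpp(\xi)$ (from \eqref{EB=Dg} and \eqref{eq:EB}) with Euler's identity for the $1$-homogeneous function $\gpp$, namely $\nabla\gpp(\xi)\cdot\xi=\gpp(\xi)$, one obtains $-h(B^\xi)\cdot\xi=\gpp(\xi)$. This proves both equalities in \eqref{var151} and exhibits $B^\xi$ as a minimizer of \eqref{eq:gpp:K-var} whose essential supremum is in fact attained as a deterministic constant $\P$-a.s.

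For part~\eqref{thm:var-buse:i}, I would unfold $F(\w,0,e_i)=-h(B^\xi)\cdot e_i-B^\xi(\w,0,e_i)$ together with the choice $h=h(B^\xi)+(t,t)$ to obtain
\[
\w_0+h\cdot e_i+F(\w,0,e_i)=t+\bigl(\w_0-B^\xi(\w,0,e_i)\bigr),
\]
whose maximum over $i\in\{1,2\}$ equals $t$ by the pointwise identity. To close the loop, $h=t(e_1+e_2)-\nabla\gpp(\xi)$ together with $\xi\cdot e_1+\xi\cdot e_2=1$ and Euler's identity gives $h\cdot\xi=t-\gpp(\xi)$, so the duality \eqref{eq:duality}---applicable because $\xi\in\Diff$ and $h$ has the form $-\nabla\gpp(\xi)+t(e_1+e_2)$ characterizing dual pairs---yields $\gpl(h)=\gpp(\xi)+h\cdot\xi=t$. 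The only substantive step in the whole argument is the passage to the limit inside the maximum in the LPP recursion, and this is handled wholesale by the joint almost-sure convergence of the increments $\Gpp_{x,v_n}-\Gpp_{y,v_n}$ that Theorem~\ref{thm:buse} supplies.
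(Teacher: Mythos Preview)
Your proof is correct. The key identity $\min_i B^\xi(\w,0,e_i)=\w_0$ that you derive from the LPP recursion and the Busemann convergence is exactly the recovery property~\eqref{Bw-9} on which the paper's argument also rests; once this holds, the algebra for both \eqref{var15} and \eqref{var151} is forced, and your use of Euler's identity and the duality $\gpl\bigl(-\nabla\gpp(\xi)+t(e_1+e_2)\bigr)=t$ is the same as the paper's use of \eqref{g(h)=0}.

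The difference is in how recovery is obtained. The paper does not rederive it from the Busemann limit; it notes that the Busemann function $B^\xi$ of Theorem~\ref{thm:buse} coincides (via Corollary~\ref{cor:buse}) with the queueing cocycle of Theorem~\ref{th:cocycles}, which carries recovery~\eqref{Bw-9} from the queueing equations, and then invokes the already-proved Theorems~\ref{th:var-sol} and~\ref{thm:var-p2p}. Your route is more self-contained at this stage: given Theorem~\ref{thm:buse} as a black box, you extract recovery directly from the one-step LPP recursion without appealing to the identification with the queueing cocycles. The paper's route, in turn, makes clear that the variational-minimizer statement holds for the full family of cocycles $B^\xi_\pm$ (Theorems~\ref{th:var-sol}, \ref{thm:var-p2p}) even in directions where the Busemann limit is not known to exist, with Theorem~\ref{thm:var-buse} then being the special case where the cocycle is a genuine Busemann function.
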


The condition $h=h(B^\xi)+(t,t)$ for some $t\in\R$ is equivalent to $h$ dual to $\xi$. Every $h$ has a dual $\xi\in\ri\Uset$ as we show in Section \ref{sec:duality}.  Consequently, if $\gpp$ is differentiable everywhere on $\ri\Uset$,  each $h$ has a minimizing  cocycle $F$ that satisfies \eqref{var15} and is obtained by centering a Busemann function.    Theorem \ref{thm:var-buse}  is proved in Section \ref{sec:busemann}.  

The choice of $i\in\{1,2\}$ in \eqref{var15} and \eqref{var151}  depends on $\w$ and is related to the notion of    competition interface.  This issue is addressed in the  companion paper  \cite{Geo-Ras-Sep-15b-}.  

Borrowing from  homogenization literature (see e.g.\ page 468 of \cite{Arm-Sou-12}), 
a minimizer of \eqref{eq:g:K-var}  
that  removes the essential supremum, that is,  a mean zero cocycle that satisfies \eqref{var15},  could be called a {\sl corrector}.


\subsection{Exactly solvable models}\label{subsec:solv}
We illustrate the  results in the two exactly solvable cases: the distribution of the  mean $\Ew$ weights $\w_x$ is 
\be\label{cases7} \begin{aligned}
\text{exponential:  } \P\{\w_x\ge t\}&=  e^{-t/\Ew} \text{ for  $t\ge0$ with $\sigma^2=\Ew^2$,  }\\
\text{or geometric: } \P\{\w_x\ge k\}&=(1-\Ew^{-1})^{k} \text{ for  $k\in\N$ with  $\sigma^2=\Ew(\Ew-1)$.}
\end{aligned}\ee
Calculations behind the claims below are sketched in Section \ref{sec:solv} after the connection with queueing theory has been established.

For both cases in \eqref{cases7}  the point-to-point limit   function is 
 \begin{align}\label{eg:gpp}
 \gpp(\xi)=\Ew(\xi\cdot e_1+\xi\cdot e_2)+2\sigma\sqrt{(\xi\cdot e_1)(\xi\cdot e_2)}\,. 
 \end{align}
 In the exponential case this formula was first derived by Rost \cite{Ros-81} (who presented the model in its coupling with TASEP without the last-passage formulation)  while early derivations of the geometric case appeared in  \cite{Coh-Elk-Pro-96, Joc-Pro-Sho-98, Sep-98-mprf-1}.  
  Convex duality \eqref{eq:duality} becomes 
\begin{align*}
&\xi\in\ri\Uset\text{ is dual to $h$ if and only if }\\
&\qquad  \exists t\in\R: \quad  h=\bigl(\Ew+\sigma\sqrt{{\xi\cdot e_1}/{\xi\cdot e_2}}+t \,,\, \Ew+\sigma\sqrt{{\xi\cdot e_2}/{\xi\cdot e_1}}+t\,\bigr).  
\end{align*}
This in turn gives an explicit formula for $\gpl(h)$.

Since $\gpp$ above  is  differentiable and strictly concave, all points of $\ri\Uset$ are exposed points of differentiability.   
 Theorem \ref{thm:buse} implies that Busemann functions \eqref{eq:grad:coc1} exist in all directions  $\xi\in\ri\Uset$. 
They  minimize formulas \eqref{eq:g:K-var} and \eqref{eq:gpp:K-var}
as given in \eqref{var15} and \eqref{var151}. 
For each $\xi\in\ri\Uset$ the processes $\{ B^\xi(ke_1, (k+1)e_1): k\in\Z_+\}  $
and $\{ B^\xi(ke_2, (k+1)e_2): k\in\Z_+\}  $ are i.i.d.\ processes  independent of each other, exponential or geometric depending on the case,  with means 
\be \label{geom:B}  
\E[B^\xi(ke_i, (k+1)e_i)]=\Ew+\sigma  \sqrt{\xi\cdot e_{3-i}/\xi\cdot e_i}, \qquad i\in\{1,2\}. 
 \ee
 For the distribution of  $B^\xi$ see  Theorem 8.1 in \cite{Cat-Pim-12},  Section 3.3 in \cite{Cat-Pim-13},  and Section \ref{sec:solv} below.

\subsection{Flat edge in the percolation cone}\label{subsec:perc}

In this section we assume that the LPP weights satisfy $\w_x\le 1$ and $p_1=\P\{\w_0=1\}>0$. The classic Durrett-Liggett flat edge result, sharpened by Marchand,  implies that if $p_1$ is large enough,  $\gpp$ is linear on the percolation cone.  By the more recent work of Auffinger-Damron, $\gpp$  is  differentiable on the edges.    We make a precise statement about this below, after  a  short detour into oriented percolation.  

In {\sl oriented site percolation} vertices of $\Z^2$ are assigned i.i.d.\ $\{0,1\}$-valued random variables $\{\sigma_z\}_{z\in\Z^2}$ with $p_1=\P\{\sigma_0=1\}$.
For  points  $u\le v$ in $\Z^2$  we write   $u\to v$ (there is an open path from $u$   to $v$) if there exists an   up-right path $u=x_0, x_1,\dotsc, x_m=v$ with $x_{i}-x_{i-1}\in\{e_1,e_2\}$, $m=\abs{v-u}_1$, and such that $\sigma_{x_i}=1$ for $i=1,\dotsc,m$.  (The openness of a path does not depend on the weight at the initial point of the path.)  
  The {\sl percolation}  event  $\{u\to\infty\}$ is the existence of  an infinite open up-right path from point $u$.  
There exists a critical threshold $\pc \in (0,1)$ such that if $p_1 < \pc$ then $\P\{ 0\to{\infty}\} = 0$ and if $p_1 > \pc$
then $\P\{ 0\to\infty \} > 0$.   (The facts we need about  oriented site percolation  are proved in article  \cite{Dur-84}  for  oriented edge percolation.  The proofs apply to site percolation just as well.)    

   Let  $\cla_n = \{ u \in \Z^2_+: \abs{u}_1=n, \, 0 \to u\}$ denote the set of vertices on level $n$ that can be reached from the origin along open paths.   The  right edge   $a_n = \max_{u \in \cla_n}\{ u \cdot e_1 \}$ is  defined on the event $\{\cla_n \neq \varnothing\}$.    When $p_1\in(\pc,1)$ there exists a  constant $\beta_{p_1} \in (1/2, 1)$ such that   \cite[eqn.~(7) on p.~1005]{Dur-84}
		\[
			\lim_{n\to \infty} \frac{a_n}{n}\one\{ 0\to{\infty}\} = \beta_{p_1} \one\{ 0\to{\infty}\} \qquad
			\P\textrm{-a.s. }  
		\]
Let  $\etamax=(\beta_{p_1}, 1-\beta_{p_1})$ and $\etamin=(1-\beta_{p_1}, \beta_{p_1})$. The {\sl percolation cone} is  the set $\{\xi\in\R_+^2:  \xi/\abs{\xi}_1\in [\etamin, \etamax]\}$.  
The next theorem is proved by associating 
  an oriented site percolation process to the LPP process by defining $\sigma_x=\one\{\w_x=1\}$.  
 


%

\begin{theorem} \label{th:flat-edge}  Assume that  $\{\w_x\}_{x\in\Z^2}$ are i.i.d., $\E\abs{\w_0}^p<\infty$ for some $p>2$ and  $ \w_x\le 1$.  Suppose  $\opc<p_1=\P\{\w_0=1\}<1$.     Let $\xi\in\Uset$.  Then $\gpp(\xi)\le 1$,  and $\gpp(\xi)=1$  if and only if $\xi\in [\,\etamin, \etamax\,]$.   The endpoints  $\etamin$ and  $\etamax$ are points of differentiability of $\gpp$.  

\end{theorem}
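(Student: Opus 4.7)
The bound $\gpp(\xi)\le 1$ for $\xi\in\Uset$ is immediate: since $\w_x\le 1$ almost surely, any admissible path from $0$ to $\fl{n\xi}$ uses $|\fl{n\xi}|_1\sim n$ vertices and thus has total weight at most $n$. Dividing by $n$ and invoking \eqref{eq:g:p2p} yields $\gpp(\xi)\le|\xi|_1=1$.

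To identify where equality holds, I would couple the LPP weights with the oriented site percolation process defined by $\sigma_x=\one\{\w_x=1\}$, whose open-site density is $p_1>\pc$. For $\xi\in[\etamin,\etamax]$, a block construction in the spirit of Durrett and Liggett produces, with probability tending to one, an up-right path from $0$ to $\fl{n\xi}$ along which all but $o(n)$ vertices carry $\sigma_x=1$. Each such vertex contributes weight $1$ to the path sum, while the remaining vertices contribute at least $c$ by the standing assumption \eqref{2d-ass}. Hence $\Gpp_{0,\fl{n\xi}}/n\to 1$, which combined with the upper bound gives $\gpp(\xi)=1$ throughout $[\etamin,\etamax]$.

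For $\xi\in\Uset\smallsetminus[\etamin,\etamax]$, the strict inequality $\gpp(\xi)<1$ is Marchand's refinement of the Durrett-Liggett flat edge theorem. The key input is the exponential large-deviation estimate $\P\{a_n\ge(\beta_{p_1}+\delta)n\}\le e^{-c_\delta n}$ for the rightmost open vertex on level $n$. Any admissible path from $0$ to $\fl{n\xi}$ with $\xi$ outside the cone either visits a linear number of closed sites, or else lies in an open cluster whose tip deviates from $\beta_{p_1}n$ by a macroscopic amount; the large-deviation estimate rules out the second possibility, so a positive density of vertices along the path have $\w_x<1$. Since the conditional expectation of $\w_0$ given $\w_0<1$ is strictly below $1$, a standard first-moment bound over the (exponentially many) admissible paths then delivers $\gpp(\xi)\le 1-c(\xi)<1$.

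The remaining and hardest assertion is that $\gpp$ is differentiable at the endpoints $\etamin$ and $\etamax$. The one-sided derivative of $\gpp$ at $\etamax$ from inside the cone is determined by the linearity on $[\etamin,\etamax]$; the task is to show the one-sided derivative from outside agrees, so that no corner forms at the edge of the cone. This is equivalent to a quantitative regularity estimate for how $1-\gpp(\xi)$ vanishes as $\xi$ approaches $\etamax$ from outside $[\etamin,\etamax]$. I expect this to be the main obstacle, since the large-deviation bound in the previous step alone only gives a weak upper bound on the rate of decay. My plan is to invoke the theorem of Auffinger and Damron, whose iterative renormalization argument sharpens the block construction above into a matching two-sided estimate and thereby yields differentiability at both endpoints of the percolation cone.
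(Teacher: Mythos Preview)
Your proposal correctly identifies the three ingredients---Durrett--Liggett for the flat segment, Marchand for strictness outside the cone, Auffinger--Damron for differentiability at the endpoints---and sketches them in the right order. This is exactly the paper's approach: it does not prove Theorem~\ref{th:flat-edge} in the body but cites these same references and points to Appendix~D of a companion paper for the adaptation from first-passage to directed last-passage percolation.
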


Theorem \ref{th:flat-edge}  above summarizes a development  carried out for undirected  first-passage percolation in articles   \cite{Auf-Dam-13,Dur-Lig-81,Mar-02}.    
A proof of Theorem \ref{th:flat-edge} for  the corner growth model, adapted  from   the earlier arguments,  is  in Appendix D of \cite{Geo-Ras-Sep-13b-}.  
As a corollary,  our results that assume differentiable endpoints of a maximal linear segment are valid for the percolation cone.  

\begin{theorem}\label{th:flat-B}
Assume \eqref{2d-ass}, $ \w_x\le 1$ and   $\opc<p_1=\P\{\w_0=1\}<1$.  
There exists a stationary $L^1(\P)$ cocycle $\{B(x,y):x,y\in\Z^2\}$ and an event  $\Omega_0$ with $\P(\Omega_0)=1$ such that  the following statements hold for each 
$\w\in\Omega_0$.  Let   $v_n\in\Z_+^2$ be a sequence  such that 
\begin{align}\label{eq:vn-perc}
\abs{v_n}_1\to\infty\quad\text{and}\quad 1-\beta_{p_1}\le\varliminf_{n\to\infty}\frac{v_n\cdot e_1}{\abs{v_n}_1}\le\varlimsup_{n\to\infty}\frac{v_n\cdot e_1}{\abs{v_n}_1}\le\beta_{p_1}.  \end{align}
Then 
	\be\label{flat-B9}   B(\w, x,y) = \lim_{n\to \infty} \big( \Gpp_{x, v_n}(\w) - \Gpp_{y, v_n}(\w) \big)  \ee
						 for all  $x,y\in\Z^2$.     Furthermore,   \ 
$\E[B(x,x+e_1)]=  \E[B(x,x+e_2)]=1. $ 
\end{theorem}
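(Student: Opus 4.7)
The plan is to obtain Theorem \ref{th:flat-B} as a direct corollary of Theorem \ref{thm:buse}(ii), with the percolation cone endpoints playing the role of $\zeta=\etamin$ and $\eta=\etamax$. To invoke that theorem I must check two hypotheses: that $\etamin,\etamax\in\Diff$, and that $[\etamin,\etamax]$ is a maximal linear segment of $\gpp$. The first is exactly the last sentence of Theorem \ref{th:flat-edge}.

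For maximality, I would use the flat-edge identity $\gpp(\xi)=1$ on $[\etamin,\etamax]$ together with $1$-homogeneity of $\gpp$. Homogeneity extends this identity off $\Uset$: in the open cone of directions $\xi\in\R_+^2$ with $\xi/\abs{\xi}_1\in\;]\etamin,\etamax[$ one has $\gpp(\xi)=\abs{\xi}_1=\xi\cdot e_1+\xi\cdot e_2$, so $\nabla\gpp(\xi)=(1,1)$ throughout that open cone and in particular at every point of $]\etamin,\etamax[\subset\ri\Uset$. Maximality follows from the strict inequality $\gpp<1$ outside $[\etamin,\etamax]$ given by Theorem \ref{th:flat-edge}: any open segment in $\ri\Uset$ strictly containing $]\etamin,\etamax[$ on which $\nabla\gpp\equiv(1,1)$ would force $\gpp\equiv 1$ on that segment (since $(1,1)\cdot(\xi-\xi')=0$ whenever $\xi,\xi'\in\Uset$), contradicting the strict inequality.

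With the hypotheses verified, Theorem \ref{thm:buse}(ii) immediately supplies the stationary $L^1(\P)$ cocycle $B$, the full-measure event $\Omega_0$, and the almost sure limit \eqref{flat-B9} along every $v_n$ satisfying \eqref{eq:vn-perc}---which is simply \eqref{eq:vn} rewritten via $\etamin\cdot e_1=1-\beta_{p_1}$ and $\etamax\cdot e_1=\beta_{p_1}$.

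For the mean identities I would apply \eqref{EB=Dg}, which states $\nabla\gpp(\xi)=\bigl(\E[B(x,x+e_1)],\E[B(x,x+e_2)]\bigr)$ for any $\xi\in[\etamin,\etamax]$. The gradient equals $(1,1)$ on the open segment by the earlier homogeneity computation, and the one-sided limit formulas \eqref{nabla-g-lim} together with $\etamin,\etamax\in\Diff$ propagate this value to the endpoints, yielding $\E[B(x,x+e_1)]=\E[B(x,x+e_2)]=1$. I do not foresee any serious obstacle: once Theorem \ref{thm:buse} and Theorem \ref{th:flat-edge} are in hand, Theorem \ref{th:flat-B} is essentially a bookkeeping exercise in verifying hypotheses and computing a gradient, with the only mildly delicate point being the justification of maximality from the strict inequality off the cone.
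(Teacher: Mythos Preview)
Your proposal is correct and matches the paper's approach: the paper does not give a separate proof of Theorem~\ref{th:flat-B} but presents it as the specialization of Theorem~\ref{thm:buse} to the percolation cone, made possible by Theorem~\ref{th:flat-edge}. Your verification of the maximality of $[\etamin,\etamax]$ and the computation $\nabla\gpp=(1,1)$ via homogeneity are exactly the details one fills in, and they are sound.
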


Continuing with the assumptions of Theorem \ref{th:flat-B}, we   develop a more explicit formula for the Busemann function.  Let 
\be\label{B-psi}  \psi(\w)= \inf_{x_{0,\infty}: \, x_0=0}  \sum_{k=0}^\infty(1-\w_{x_k}) \ee
where the infimum is over all infinite up-right paths that start at $x_0=0$.     $\psi$ is measurable because it is the nondecreasing limit of $n-\Gpl_n(0)$ as $n\to\infty$.  As part of verifying this we take  a convergent subsequence of maximizing paths for $\Gpl_n(0)$ and   establish the existence of a path $\bar x_{0,\infty}$ such that 
\[  \psi(\w)= \sum_{k=0}^\infty(1-\w_{\bar x_k}) = \lim_{n\to\infty} (n - G_{0,\,\bar x_n}) . \] 
$\bar x_{0,\infty}$ must be a geodesic, that is,  any segment $\bar x_{m,n}$ for $0\le m<n<\infty$  must give the maximal  passage time $G_{\bar x_m,\,\bar x_n}$ because otherwise there is a better path.


  By  \cite[eqn.~(3) on p.~1028]{Dur-84} applied to oriented site percolation,  under $p_1>\opc$  the event  $\cup_{0\le k\le n}\{(k,n-k)\to\infty\}$ fails with probability at most $e^{-\gamma n}$ for a constant $\gamma>0$.  On this event $\psi\le (1-c)n$.   (Recall that $\w_x\ge c$ a.s.\ is part of assumption \eqref{2d-ass}.)   Consequently   $\psi$ has an exponential moment and in particular is almost surely  finite.   

\begin{theorem}
Under the assumptions of Theorem \ref{th:flat-B},   the Busemann function $B$ of the percolation cone  in \eqref{flat-B9}   is given by 
 \be\label{Ber-B}  B(\w,x,y)=(y-x)\cdot(e_1+e_2) +\psi(T_y\w) -\psi(T_x\w). \ee
\end{theorem}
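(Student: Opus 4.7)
The plan is to identify the Busemann cocycle $B$ furnished by Theorem~\ref{th:flat-B} with the limit on the right side of the point-to-line formula \eqref{p2lB-1} of Theorem~\ref{thm:buse}, and then to read off $B(\w,0,e_i)$ from the relation $\psi(\w)=\lim_n(n-\Gpl_n(0))$ already established in the excerpt.

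First, I would verify that Theorem~\ref{thm:buse}(ii) applies on the maximal linear segment $[\etamin,\etamax]$. Theorem~\ref{th:flat-edge} provides exactly what is needed: $\gpp\equiv 1$ on $[\etamin,\etamax]$ and the endpoints $\etamin,\etamax$ are points of differentiability of $\gpp$. The cocycle $B$ produced by Theorem~\ref{th:flat-B} therefore coincides with the one supplied by Theorem~\ref{thm:buse} for $\zeta=\etamin$, $\eta=\etamax$, since both are obtained from the same a.s.\ limit \eqref{eq:grad:coc1}. By $1$-homogeneity and $\gpp\equiv 1$ on the boundary segment, $\gpp(\xi)=\xi\cdot(e_1+e_2)$ throughout the percolation cone, so $\nabla\gpp(\xi)=e_1+e_2$ for every $\xi\in[\etamin,\etamax]$. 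Choosing $t=1$ in $h=t(e_1+e_2)-\nabla\gpp(\xi)$ yields the dual tilt $h=0$ (consistent with $\gpl(0)=\sup_{\xi\in\Uset}\gpp(\xi)=1=\gpp(\xi)-0\cdot\xi$ on the cone).

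With $h=0$ and $z\in\{e_1,e_2\}$, formula \eqref{p2lB-1} reduces to
\[
B(\w,0,z)=\lim_{n\to\infty}\bigl(\Gpl_n(0)-\Gpl_{n-1}(0)\circ T_z\bigr).
\]
Rewriting the $n$-th term as $1-\bigl[n-\Gpl_n(0)\bigr]+\bigl[(n-1)-\Gpl_{n-1}(0)\circ T_z\bigr]$ and invoking the convergence $\psi(\w)=\lim_n(n-\Gpl_n(0)(\w))$ twice, at $\w$ directly and at $T_z\w$ via stationarity, one obtains $B(\w,0,z)=1+\psi(T_z\w)-\psi(\w)=z\cdot(e_1+e_2)+\psi(T_z\w)-\psi(\w)$, which is \eqref{Ber-B} in the special case $x=0$, $y=z$.

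Finally, I would extend to general $x,y\in\Z^2$ by routine cocycle bookkeeping. Stationarity and additivity give $B(\w,x,x+e_i)=B(T_x\w,0,e_i)=1+\psi(T_{x+e_i}\w)-\psi(T_x\w)$; telescoping along an up-right path from $x$ to $y$ (for $x\le y$) produces \eqref{Ber-B} in that case, and the antisymmetry $B(\w,x,y)=-B(\w,y,x)$ together with a common southwest anchor covers arbitrary pairs. The only substantive step is identifying $h=0$ as the dual tilt on the percolation cone; once that is in hand the rest of the argument is simply a rearrangement using the two defining properties of $\psi$.
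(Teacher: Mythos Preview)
Your argument is correct and takes a genuinely different route from the paper's own proof.

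The paper works with the point-to-point limit \eqref{flat-B9} directly: it picks an infinite path $x'_{0,\infty}$ that attains the infimum defining $\psi(T_x\w)$, argues via the shape theorem that the direction of $x'_n$ must stay inside the percolation cone (otherwise $\psi(T_x\w)=\infty$), takes the Busemann limit along this particular sequence, and bounds $\Gpp_{x,x'_n}-\Gpp_{y,x'_n}$ from below using $\Gpp_{y,x'_n}\le \Gpl_{m''_n}(0)\circ T_y$. This yields one inequality; swapping $x$ and $y$ and using antisymmetry of $B$ gives the other.

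Your route is more economical: you invoke the point-to-line limit \eqref{p2lB-1} with the dual tilt $h=0$, and then the identity $B(\w,0,z)=1-\psi(\w)+\psi(T_z\w)$ drops out immediately from the already-established monotone convergence $n-\Gpl_n(0)\to\psi$. This bypasses both the geodesic-direction argument and the two-sided inequality step. What the paper's proof buys in exchange is a self-contained argument that uses only Theorem~\ref{th:flat-B} as stated (the point-to-point limit), together with the geometric fact that minimizers of $\psi$ are asymptotically in the cone---a fact of independent interest. Your proof instead imports the point-to-line half of Theorem~\ref{thm:buse}, which is legitimate since Theorem~\ref{th:flat-B} is a specialization of it and the cocycles coincide, but it does lean on slightly more machinery.
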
 

\begin{proof} Let $x'_{0,\infty}$ be a path that achieves the infimum over paths that start at  $x$, in the environment $\w$:  
\[   \psi(x,\w)=\psi(T_x\w)= \inf_{x_{0,\infty}: \, x_0=x}  \sum_{k=0}^\infty(1-\w_{x_k}) =\sum_{k=0}^\infty(1-\w_{x'_k}). \]

We argue that sequence $x'_n$ must satisfy \eqref{eq:vn-perc}.    To get a contradiction, suppose that   $x'_{n_i}\cdot e_1> \abs{x'_{n_i}}_1(\beta_{p_1}+\e)$ for some $\e>0$ and a  subsequence $n_i$.    Then by the shape theorem \eqref{lln5}  and the part of Theorem  \ref{th:flat-edge} that says $\gpp<1$ away from  the percolation cone,  
$\varlimsup  n_i^{-1} \Gpp_{x,\, x'_{n_i}}  \le 1-\delta$ for some $\delta>0$.   We have a contradiction:
\begin{align*}
\psi(x,\w) \ge  \sum_{k=0}^{n_i-1} (1-\w_{x'_k})  =  n_i-  \Gpp_{x, \,x'_{n_i}}   \nearrow \infty  \quad \text{ as $i\to\infty$.}  
\end{align*}

Now we can take the limit in \eqref{flat-B9} along $x'_n$.  Let $m'_n=\abs{x'_n-x}_1$ and $m''_n=\abs{x'_n-y}_1$. Take $n$ large enough so that $x'_n\ge x\vee y$ in coordinatewise order.  Use  $\Gpp_{y, \,x'_{n}} \le \Gpl_{m''_n}(0)\circ T_y$ to write  
\begin{align*}
 \Gpp_{x, \,x'_{n}}- \Gpp_{y, \,x'_{n}} &\ge 
   (x'_n-x)\cdot(e_1+e_2) -[  m'_n- \Gpp_{x, \,x'_{n}} ] \\
   &\qquad\qquad
    - (x'_n-y)\cdot(e_1+e_2) +[m''_n -  \Gpl_{m''_n}(0)\circ T_y ]
\end{align*}
and take a limit to get 
\[   B(\w,x,y)=\lim_{n\to\infty} ( \Gpp_{x, \,x'_{n}}- \Gpp_{y, \,x'_{n}}) 
\ge   (y-x)\cdot(e_1+e_2) -  \psi(T_x\w) + \psi(T_y\w).  \]  
Switch around $x$ and $y$ and use $B(\w,x,y)=-B(\w,y,x)$ to get the opposite inequality. The proof is complete.\qed
\end{proof}  

\begin{remark}[Weak disorder in the percolation cone] \label{rk:pc-wdo}   At inverse temperature $\beta\in(0,\infty)$, the point-to-line partition function of the {\it directed polymer model} is 
\[   Z_n =\sum_{x_{0,n-1}} 2^{-n+1} e^{\beta \sum_{k=0}^{n-1}\w_{x_k}}  \]
 where the sum is over admissible paths $x_{0,n-1}$ that start at $x_0=0$.  
The normalized partition function  $W_n=Z_n/\E Z_n$ is a positive martingale and has an a.s.\  limit $W_n\to W_\infty$.  By definition,  the model is in  {\it weak disorder} if $\P(W_\infty>0)=1$ and otherwise  in {\it strong disorder}   \cite[p.~208]{Hol-09}.  
It is known that the 1+1 dimensional directed polymer is in strong disorder at all positive $\beta$-values, as long as $\w_0$ has finite exponential moments.
(See \cite[Thm.~2.3(b)]{Com-Shi-Yos-03} and \cite[Thm.~1.1]{Car-Hu-02}).

The zero-temperature limit of the polymer  is the  point-to-line last-passage model:  $e^{\Gpl_n(0)}=\lim_{\beta\to\infty}  Z_n^{1/\beta}$, while   $\lim_{\beta\to\infty}  W_n^{1/\beta}=e^{G_n(0)-n}$.  Process  $e^{\Gpl_n(0)-n}$  is no longer a martingale  
but it is a supermartingale (because $\Gpl_n(0)\le \Gpl_{n-1}(0)+1$)  and, as we have seen, converges a.s.\ to $e^{-\psi}$.   
Above we observed that $\psi<\infty$   under $p_1>\opc$.  So  the martingale criterion suggests that the model is in weak disorder.   

We can observe two other markers of weak disorder.  The fluctuation exponent of $\Gpl_n(0)$ is zero, since without any normalization, 
$ \Gpl_n(0) -n\gpl(0) =  \Gpl_n(0) -n$ converges to a finite random quantity.  
Furthermore,  the centered cocycle $F(\w,x,y)=h(B)\cdot(x-y)-B(\w,x,y)$ that minimizes in Theorem \ref{thm:var-buse}\eqref{thm:var-buse:i} is a gradient:  
$F(\w,x,y)=\psi(T_x\w)-\psi(T_y\w)$.   It is in general true  in weak disorder that the    minimizer of  the variational formula for the quenched point-to-line free energy is a gradient \cite[Thm.~2.8]{Ras-Sep-Yil-15-}, though it is currently an open question  whether this is a characterization of weak disorder.  
\end{remark}

\medskip 

{\it Notice for the remainder of the paper.}   No assumptions  on the weights beyond \eqref{2d-ass}  and no regularity assumptions   on the shape function $\gpp$ are used, except when otherwise stated.

\section{Duality}
\label{sec:duality}

  By homogeneity we can represent  $\gpp$ by a single variable function.  A way of doing this that ties in naturally with the  queuing  theory arguments we use later  is   to   define  
  	\begin{align}
		\gppa(s)=\gpp(1,s)=\gpp(s,1)\quad \text{for}  \quad 0\le  s<\infty.    \label{eq:gbar}
	\end{align}
Function   $\gppa$ is real-valued, continuous and  concave.  Consequently 
one-sided derivatives $\gppa'(s\pm)$ exist and are monotone: $\gppa'(s_0+)\ge\gppa'(s_1-)\ge\gppa'(s_1+)$  for $0\le s_0<s_1$.  
Symmetry and homogeneity of $\gpp$ give  $\gppa(s)=s\gppa(s^{-1})$. 
 
\begin{lemma} 
\label{gppa-lm}
The derivatives satisfy  $\gppa'(s\pm)>\Ew$ for all $s\in\R_+$, 	
 	$\gppa'(0+)=\infty$, and    
	$\gppa'(\infty-)\equiv\ddd\lim_{s\to\infty}\gppa'(s\pm) =\gppa(0)=\Ew$.   
	 \end{lemma}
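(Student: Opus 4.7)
The plan is to assemble the three claims from three ingredients: the boundary value $\gppa(0)=\Ew$, the boundary asymptotic $\gppa(s)=\Ew+2\sigma\sqrt{s}+o(\sqrt{s})$ as $s\searrow 0$ cited from \cite{Mar-04} in Section \ref{sec:results}, and the symmetry identity $\gppa(s)=s\,\gppa(1/s)$ which follows from $\gpp(\xi)=\gpp(\xi\cdot e_2,\xi\cdot e_1)$ together with $1$-homogeneity.

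First I would pin down $\gppa(0)$. By definition $\gppa(0)=\gpp(1,0)=\lim_n n^{-1}\Gpp_{0,(n,0)}$, and since the only admissible path from $0$ to $(n,0)$ is the horizontal one $0,e_1,\dots,(n-1)e_1$, we have $\Gpp_{0,(n,0)}=\sum_{k=0}^{n-1}\w_{ke_1}$ and the strong law of large numbers gives $\gppa(0)=\Ew$. Plugging this into the symmetry identity and using continuity of $\gppa$ at $0$ yields $\gppa(s)/s=\gppa(1/s)\to\Ew$ as $s\to\infty$. Since $\gppa$ is concave, the chord slope $(\gppa(s)-\gppa(0))/s$ is nonincreasing in $s$ and decreases to $\gppa'(\infty-)$, so $\gppa'(\infty-)=\lim_{s\to\infty}(\gppa(s)-\gppa(0))/s=\Ew$, which is the third claim.

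For $\gppa'(0+)=\infty$, the boundary asymptotic gives
\[
\frac{\gppa(s)-\gppa(0)}{s}=\frac{2\sigma+o(1)}{\sqrt{s}}\longrightarrow+\infty\quad\text{as }s\searrow 0.
\]
By concavity the chord slope from $0$ is nonincreasing in $s$ with supremum $\gppa'(0+)$, which is therefore $+\infty$. This is the one step that uses nondegeneracy of $\w_0$ through $\sigma>0$.

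For the strict lower bound $\gppa'(s\pm)>\Ew$ at every finite $s\ge 0$, the case $s=0$ is already covered. For $s>0$, I would argue by contradiction: assume $\gppa'(s_0-)=\Ew$ at some $s_0>0$. Monotonicity of the concave derivative combined with $\gppa'(\infty-)=\Ew$ forces $\gppa'\equiv\Ew$ on $[s_0,\infty)$, so $\gppa(s)=\Ew s+c$ on $[s_0,\infty)$ for some constant $c$. Applying the symmetry $\gppa(s)=s\,\gppa(1/s)$ at $s\ge s_0$ yields $\gppa(u)=\Ew+cu$ for $u\in(0,1/s_0]$, making $\gppa$ linear with finite slope $c$ on an interval touching $0$. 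This contradicts $\gppa'(0+)=\infty$. I expect this contradiction step to be the main obstacle, since it is the one place that chains symmetry, concavity, and the boundary asymptotic together to exclude any finite slope from appearing in the interior; every other step unpacks concavity or invokes the cited shape-function expansion directly.
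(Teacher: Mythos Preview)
Your proof is correct and follows essentially the same approach as the paper: both use Martin's boundary asymptotic for $\gppa'(0+)=\infty$, the symmetry $\gppa(s)=s\,\gppa(1/s)$ for the limit at infinity, and a contradiction ruling out $\gppa'\equiv\Ew$ on a tail $[s_0,\infty)$. The only cosmetic difference is the direction of the contradiction step---the paper transports the small-$s$ asymptotic to large $s$ to get $\gppa(s)=s\Ew+2\sigma\sqrt{s}+o(\sqrt{s})$, while you transport the assumed large-$s$ linearity back to small $s$; these are equivalent uses of the same symmetry.
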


\begin{proof}
	The   shape universality at the boundary of $\R_+^2$ by   J.~Martin 
	\cite[Theorem 2.4]{Mar-04} says that 
		\begin{align}
		\label{eq:g-asym}
			\gppa(s)=\Ew+2\sigma\sqrt{ s}+o(\sqrt{s}\,)  \quad \text{as} \;  s\searrow 0.  
		\end{align}
	This gives $\gppa(0)=\Ew$ and 
  	$\gppa'(0+)=\infty$.
 	Lastly,  
		\[
			\gppa'(\infty-)=\lim_{s\to\infty}s^{-1}\gppa(s)=\lim_{s\to\infty}\gppa(s^{-1})=\gppa(0)=\Ew.
		\]
	Martin's asymptotic \eqref{eq:g-asym}  and $\gppa(s)=s\gppa(s^{-1})$ give 
		\begin{align}
		\label{eq:g-asym2}
			\gppa(s)=s\Ew+2\sigma\sqrt{ s}+o(\sqrt{s}\,)  \quad \text{as} \;  s\nearrow \infty.  
		\end{align}
	This is incompatible with having $\gppa'(s)=\Ew$ for $s\ge s_0$ for any
	$s_0<\infty$.  \qed
	\end{proof}

The lemma above has two 
important geometric consequences: 
\begin{align} 
\label{gg-1}&\text{every  linear segment of  $\gpp$   must lie in the interior $\ri\Uset$, \ \  and}  \\
\label{gg-2}&\text{the boundary $\{\xi\in\R_+^2:  \gpp(\xi)=1\}$  of the limit shape is asymptotic to the axes.}  
 \end{align}  

Define  
	\begin{align}
		\f(\alpha)=\sup_{s\ge0}\{\gppa(s)-s\alpha\} \quad \text{for}  \quad  \Ew< \alpha<\infty. \label{eq:flux}
	\end{align}

\begin{remark}[Queueing interpretations]\label{rk:q}  The quantities introduced in this section have natural interpretations in a queueing context.   The queueing connection  of LPP in terms of tandem service stations goes as follows.   Imagine a queueing system with customers labeled  $0,1,\dotsc,m$ and service stations labeled $0,1,\dotsc,n$.   The random weight $\w_{i,j}$ is the service time of customer $i$ at station $j$.  At time $t=0$ all customers  are  lined up at service station $0$.   Customers proceed through the system in order, obeying FIFO (first-in-first-out) discipline, and joining the queue at station $j+1$ as soon as service at station $j$ is complete.  Then for each $0\le k\le m$ and $0\le \ell\le n$,  
   $\Gpp_{(0,0),(k,\ell)}$ is the time when customer $k$ enters service at station $\ell$ and $\Gpp_{(0,0),(k,\ell)}+\w_{k,\ell}$ is the time when customer $k$ departs station $\ell$ and joins the end of the queue at station $\ell+1$.   Among the seminal references for these ideas are \cite{Gly-Whi-91,Mut-79}. 

In Section \ref{app:q} we  make use of a queueing system that operates in this  manner but is stationary in space and time, and the sequences of customers and stations are bi-infinite.  In this setting $\alpha\in(\Ew,\infty)$ is the mean interarrival (and interdeparture) time of customers at each queue, and it parametrizes the stationary distributions of the system.   $\f(\alpha)$ is the mean sojourn time, that is, the time between arrival and departure of a particular customer at a particular station.   
\end{remark}

\begin{lemma}\label{lm:f-properties}
	Function  $f$ is a strictly decreasing, continuous and convex involution 
	of the interval $(\Ew,\infty)$ onto itself, with  limits $\f(\Ew+)=\infty$ and $\f(\infty-)=\Ew$. That $f$ is an   involution means that $\f(\f(\alpha))=\alpha$.  
\end{lemma}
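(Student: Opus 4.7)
The function $\f$ is a Legendre-type transform of the concave $\gppa$, so convexity comes for free; the essential structural ingredient is the symmetry $\gppa(s)=s\gppa(s^{-1})$ noted just before Lemma~\ref{gppa-lm}, which is what makes $\f$ an involution rather than merely a bijection. My plan is: (i) verify convexity, strict monotonicity, continuity, and that $\f$ maps $(\Ew,\infty)$ into itself; (ii) compute the two boundary limits from the asymptotics of $\gppa$; (iii) derive $\f\circ\f=\mathrm{id}$ by substituting $t=1/s$ in the supremum.

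For (i), convexity is immediate as $\f$ is a pointwise supremum of affine functions, and continuity on the open interval then follows. Finiteness of $\f(\alpha)$ for $\alpha>\Ew$ comes from the large-$s$ asymptotic \eqref{eq:g-asym2}: since $\gppa(s)/s\to\Ew<\alpha$, $\gppa(s)-s\alpha\to-\infty$ and the supremum is finite. The strict bound $\f(\alpha)>\Ew$ uses $\gppa'(0+)=\infty$ from Lemma~\ref{gppa-lm}: for small $s>0$, $\gppa(s)-s\alpha>\gppa(0)=\Ew$. For strict monotonicity I argue that the supremum is attained at some $s^*\in(0,\infty)$---not at $0$, since $\f(\alpha)>\Ew=\gppa(0)$, and not at infinity, since $\gppa(s)-s\alpha\to-\infty$---and then for $\alpha_1<\alpha_2$ and $s^*$ a maximizer for $\alpha_2$, $\f(\alpha_1)\ge\gppa(s^*)-s^*\alpha_1>\gppa(s^*)-s^*\alpha_2=\f(\alpha_2)$.

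For (ii), the limit $\f(\Ew+)=\infty$ comes from \eqref{eq:g-asym2}: fix $s_0$ large so that $\gppa(s_0)-s_0\Ew\ge\sigma\sqrt{s_0}$, and then $\f(\alpha)\ge\gppa(s_0)-s_0\alpha=(\gppa(s_0)-s_0\Ew)-s_0(\alpha-\Ew)\to\gppa(s_0)-s_0\Ew$ as $\alpha\searrow\Ew$, which can be made arbitrarily large in $s_0$. For $\f(\infty-)=\Ew$, given $\e>0$ I pick $\delta>0$ with $\gppa(\delta)\le\Ew+\e$ and use the supporting line $\gppa(s)\le\gppa(\delta)+\gppa'(\delta+)(s-\delta)$ together with the trivial estimate on $[0,\delta]$ to conclude $\f(\alpha)\le\Ew+\e$ once $\alpha>\max(\gppa'(\delta+),0)$; combined with $\f(\alpha)\ge\gppa(0)=\Ew$ this gives the limit.

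The main obstacle is (iii). Using $\gppa(s)=s\gppa(s^{-1})$ and substituting $t=1/s$ converts the defining supremum to
\[\f(\alpha)=\sup_{t>0}\frac{\gppa(t)-\alpha}{t};\]
the missing $s=0$ term is absorbed because $(\gppa(t)-\alpha)/t\to\Ew=\gppa(0)$ as $t\to\infty$. Rearranging, $\alpha+t\f(\alpha)\ge\gppa(t)$ for every $t>0$, which yields $\alpha\ge\f(\f(\alpha))$ on taking a supremum over $t$. For the matching lower bound, $(\gppa(t)-\alpha)/t\to-\infty$ as $t\searrow 0$ (numerator $\to\Ew-\alpha<0$) and $\to\Ew$ as $t\to\infty$, so since $\f(\alpha)>\Ew$ the supremum is attained at some $t^*\in(0,\infty)$ with $\gppa(t^*)=\alpha+t^*\f(\alpha)$; evaluating the sup defining $\f(\f(\alpha))$ at $t^*$ gives $\f(\f(\alpha))\ge\gppa(t^*)-t^*\f(\alpha)=\alpha$, completing the involution. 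Surjectivity of $\f$ onto $(\Ew,\infty)$ is then automatic from $\f\circ\f=\mathrm{id}$.
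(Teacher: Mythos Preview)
Your proof is correct and covers all the claims. The overall structure matches the paper's in part (i) (convexity via supremum of affine functions, strict decrease via a positive maximizer, finiteness and the lower bound $\f(\alpha)>\Ew$ from the boundary asymptotics of $\gppa$), but your treatment of the involution and of the boundary limits is genuinely different from the paper's.

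For the involution, the paper works through the derivative characterization $\f(\alpha)=\gppa(s)-s\alpha\Leftrightarrow\alpha\in[\gppa'(s+),\gppa'(s-)]$, differentiates the symmetry relation $\gppa(s)=s\gppa(s^{-1})$ to obtain $\gppa'(s\pm)=\gppa(s^{-1})-s^{-1}\gppa'(s^{-1}\mp)$, and then chains these identities to conclude $\f(\f(\alpha))=\alpha$. Your route is more elementary: the substitution $t=1/s$ together with the symmetry of $\gppa$ immediately yields the alternative representation $\f(\alpha)=\sup_{t>0}(\gppa(t)-\alpha)/t$, from which one inequality is a rearrangement and the other comes from evaluating at a maximizer. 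This avoids any discussion of one-sided derivatives, at the modest cost of not producing the derivative identities \eqref{eq:f=g-sa}--\eqref{eq:f=ga'} that the paper uses later. For the limits, the paper simply observes that a strictly decreasing continuous involution of $(\Ew,\infty)$ must exchange the endpoints, whereas you compute $\f(\Ew+)=\infty$ and $\f(\infty-)=\Ew$ directly from the asymptotics of $\gppa$; the paper's argument is shorter, yours is self-contained and does not require the involution to be established first.
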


\begin{proof}
   	Asymptotics 
   	\eqref{eq:g-asym} and  \eqref{eq:g-asym2} imply  that   
	$\Ew< f(\alpha)<\infty$ for all $\alpha>\Ew$ and also that 
	the supremum in \eqref{eq:flux} is attained at some $s$. 
 	Furthermore, $\alpha<\beta$ implies $\f(\beta)=\gamma(s_0)-s_0\beta$ 
	with $s_0>0$ and $\f(\beta)<\gamma(s_0)-s_0\alpha\le\f(\alpha)$. 
	As a supremum of linear functions $\f$ is convex, and  
	hence  continuous on the open interval $(\Ew,\infty)$. 

We show how 	the   symmetry of $\gpp$ implies   that $\f$ is an involution.  
	By  concavity of $\gppa$,   
		\begin{align}
		\label{eq:f=g-sa}
			\f(\alpha)=\gppa(s)-s\alpha\quad\text{if and only if  $\alpha\in[\gppa'(s+),\gppa'(s-)]$}
		\end{align}
	and by Lemma \ref{gppa-lm} the intervals on the right cover $(\Ew,\infty)$. 
	Since $\f$ is strictly decreasing the above is the same as 
		\begin{align}
		\label{eq:a=g-f/s}
			\alpha=\gppa(s^{-1})-s^{-1}\f(\alpha)\quad\text{if and only if $\f(\alpha)\in[\f(\gppa'(s-)),\f(\gppa'(s+))]$.}
		\end{align}
	Differentiating $\gppa(s)=s\gppa(s^{-1})$ gives 
		\begin{align}
		\label{eq:gppa(1/s)}
			\gppa'(s\pm)=\gppa(s^{-1})-s^{-1}\gppa'(s^{-1}\mp).
		\end{align} 
	By \eqref{eq:f=g-sa} and  \eqref{eq:gppa(1/s)} 
	the condition in \eqref{eq:a=g-f/s} can be rewritten as
		 \begin{align}\label{eq:f=ga'}
			\f(\alpha)\in[\gppa(s)-s\gppa'(s-),\gppa(s)-s\gppa'(s+)]=[\gppa'(s^{-1}+),\gppa'(s^{-1}-)].
		 \end{align}
 Combining this with  \eqref{eq:f=g-sa} and  \eqref{eq:a=g-f/s} shows that $\alpha=\f(\f(\alpha))$.  
	The claim about the limits follows from $\f$ being a decreasing involution.\qed
\end{proof}
%
%

Extend these functions to the entire real line by  $\gppa(s)=-\infty$ when $s<0$ and $\f(\alpha)=\infty$ when $\alpha\le\Ew$.  Then convex duality gives 
	\be\label{eq:ga=f*}
		\gppa(s)=\inf_{\alpha>\Ew}\{\f(\alpha)+s\alpha\}.
	\ee

The natural bijection between $s\in(0,\infty)$ and $\xi\in\ri\Uset$ that goes together with \eqref{eq:gbar} is 
\be\label{s-xi} s=\xi\cdot e_1/\xi\cdot e_2. \ee  
Then direct differentiation, \eqref{eq:f=g-sa} and \eqref{eq:gppa(1/s)} give 
		\begin{align}
		\label{grad g-1}
			\nabla\gpp(\xi\pm) 
			=\bigl(\gppa'(s\pm),\gppa'(s^{-1}\mp)\bigr)
			=\bigl(\gppa'(s\pm),\f(\gppa'(s\pm))\bigr).
		\end{align}
Since  $\f$ is linear on $[\gppa'(s+),\gppa'(s-)]$,    we  get the following connection between the gradients of   $\gpp$  and the graph of $\f$:   for $\xi\in\ri\Uset$, 
		\be
		\label{f-9} 
			[\nabla\gpp(\xi+),\nabla\gpp(\xi-)]=\{(\alpha,\f(\alpha)):\alpha\in[\gppa'(s+),\gppa'(s-)]\}.  
		\ee
		
%

\smallskip 	

The next theorem details the duality between tilts $h$ and velocities $\xi$.  It is needed only for Section \ref{subsec:varsol} where we solve the variational formulas.  
 
\begin{theorem}\noindent\label{th:tilt-velocity}  
 	\begin{enumerate}[\ \ \rm(i)]
%
	\item\label{th:tilt-velocity:h->xi} Let $h\in\R^2$. There exists a unique $t=t(h)\in\R$ such that 
		\be
			h-t(e_1+e_2)\in-[\nabla\gpp(\xi+),\nabla\gpp(\xi-)]
		\label{eq:h-t}
		\ee
	 for some $\xi\in\ri\Uset$. 
	 The set  of  $\xi$  for which \eqref{eq:h-t} holds is a nonempty {\rm(}but possibly degenerate{\rm)} line segment 
 	$[\,\ximin(h),\ximax(h)]\subset\ri\Uset$.    If  $\ximin(h)\ne\ximax(h)$ then  $[\,\ximin(h),\ximax(h)]$ is a maximal linear segment of $\gpp$.   
	\item\label{th:tilt-velocity:xi-h} $\xi\in\ri\Uset$ and $h\in\R^2$ satisfy duality \eqref{eq:duality} if and only if     \eqref{eq:h-t} holds. 
	\end{enumerate}
\end{theorem}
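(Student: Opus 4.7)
\medskip
\noindent\textbf{Proof plan.} The plan is to reduce both parts to the one-parameter curve $\{(\alpha,\f(\alpha)):\alpha\in(\Ew,\infty)\}$, which by \eqref{grad g-1} and \eqref{f-9} records all one-sided gradients $\nabla\gpp(\xi\pm)$ as $\xi$ ranges over $\ri\Uset$.

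I will begin with part (ii), since it explains the form of \eqref{eq:h-t}. By the definition \eqref{h-xi} of $\gpl$, the duality identity \eqref{eq:duality} is equivalent to $\xi$ maximizing $\zeta\mapsto\gpp(\zeta)+h\cdot\zeta$ over $\zeta\in\Uset$. Writing $\zeta=(r,1-r)$ and combining the boundary asymptotics \eqref{eq:g-asym}--\eqref{eq:g-asym2} (already established in the proof of Lemma~\ref{gppa-lm}), the derivative in $r$ of $r\mapsto\gpp(r,1-r)+h\cdot(r,1-r)$ blows up to $+\infty$ at $r=0$ and to $-\infty$ at $r=1$, so every maximizer lies in $\ri\Uset$. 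Since $\gpp$ is concave and $1$-homogeneous on $\R_+^2$, Euler's identity forces the superdifferential at any $\xi\in\ri\Uset$ to be the one-dimensional chord $\partial\gpp(\xi)=[\nabla\gpp(\xi+),\nabla\gpp(\xi-)]$. Because $\Uset$ lies in the hyperplane $\{\zeta:\zeta\cdot(e_1+e_2)=1\}$, the first-order condition for a maximum on $\Uset$ is that some $v\in\partial\gpp(\xi)$ satisfies $(v+h)\cdot(e_1-e_2)=0$, equivalently $v=t(e_1+e_2)-h$ for some $t\in\R$. Rewritten, this is exactly \eqref{eq:h-t}, and concavity makes the condition sufficient as well.

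For part (i), I will produce $t(h)$ analytically and then read off the $\xi$-set from the already-proved part (ii). Define
\[\varphi(t)=\f(t-h\cdot e_1)-(t-h\cdot e_2)\quad\text{on}\quad t>h\cdot e_1+\Ew.\]
By Lemma~\ref{lm:f-properties}, $\f$ is a continuous, strictly decreasing involution of $(\Ew,\infty)$ with $\f(\Ew+)=\infty$ and $\f(\infty-)=\Ew$. Hence $\varphi$ is continuous and strictly decreasing with $\varphi(t)\to\infty$ as $t\searrow h\cdot e_1+\Ew$ and $\varphi(t)\to-\infty$ as $t\to\infty$, giving a unique zero $t=t(h)$. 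Setting $\alpha^*=t(h)-h\cdot e_1>\Ew$ yields $(\alpha^*,\f(\alpha^*))=t(h)(e_1+e_2)-h$, so by \eqref{f-9} condition \eqref{eq:h-t} holds for a given $\xi\in\ri\Uset$ iff $\alpha^*\in[\gppa'(s+),\gppa'(s-)]$ with $s=\xi\cdot e_1/\xi\cdot e_2$. Concavity of $\gppa$ together with Lemma~\ref{gppa-lm} (which tells us the one-sided derivatives sweep out all of $(\Ew,\infty)$) implies that the admissible $s$ form a nonempty closed interval $[s_-,s_+]$; pulling back through the bijection \eqref{s-xi} produces $[\,\ximin(h),\ximax(h)\,]\subset\ri\Uset$. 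If $s_-<s_+$, then $\gppa'\equiv\alpha^*$ on $(s_-,s_+)$, so $\gppa$ and hence $\gpp$ is linear on $[\,\ximin(h),\ximax(h)\,]$; maximality is immediate from the definition of $s_\pm$.

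The main technical step is passing from ``$\xi$ maximizes $\gpp+h\cdot\,\cdot\,$ on $\Uset$'' to the supergradient form \eqref{eq:h-t}. The ingredient that makes this clean is $1$-homogeneity, which collapses $\partial\gpp(\xi)$ at an interior direction to a one-dimensional chord, so that the affine constraint $\zeta\cdot(e_1+e_2)=1$ leaves exactly the one-parameter freedom $t(e_1+e_2)$ appearing in \eqref{eq:h-t}. Everything else is a direct exploitation of the involution property of $\f$ and of the explicit description \eqref{f-9} of the gradient curve.
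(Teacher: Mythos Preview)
Your argument is correct. Part \eqref{th:tilt-velocity:h->xi} proceeds essentially as in the paper: both of you produce the unique $t(h)$ by intersecting the strictly decreasing curve $\{(\alpha,\f(\alpha)):\alpha>\Ew\}$ with the $45^\circ$ line through $-h$ (you phrase it analytically via the zero of $\varphi$, the paper draws the picture), and both of you then read off the admissible $\xi$ from the condition $\alpha^*\in[\gppa'(s+),\gppa'(s-)]$ and the monotonicity of the one-sided derivatives.

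Part \eqref{th:tilt-velocity:xi-h} is where the approaches diverge. The paper treats the two implications separately and computes through $\gpl$: for the forward direction it invokes $\gpl(-q)=0$ for $q$ in the superdifferential together with the identity $\gpl(h+t(e_1+e_2))=\gpl(h)+t$; for the converse it reduces duality to the algebraic relation $\gppa(s)=\alpha s+\f(\alpha)$ and then to $\alpha\in[\gppa'(s+),\gppa'(s-)]$. You instead argue directly via first-order conditions for the concave maximization $\max_{\zeta\in\Uset}\{\gpp(\zeta)+h\cdot\zeta\}$, using $1$-homogeneity (Euler's relation) to collapse the superdifferential $\partial\gpp(\xi)$ to the chord $[\nabla\gpp(\xi+),\nabla\gpp(\xi-)]$ and then matching against the normal direction $e_1+e_2$ of $\Uset$. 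Your route is cleaner conceptually and avoids touching $\gpl$ in part \eqref{th:tilt-velocity:xi-h}; the paper's route has the side benefit of explicitly recording $\gpl(h)=t(h)$ along the way, which is used later (e.g.\ in the proof of Theorem~\ref{th:var-sol}).
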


\begin{proof}
	The graph $\{(\alpha,\f(\alpha)):\alpha>\Ew\}$ is strictly decreasing with 
	limits $f(\Ew+)=\infty$ and $f(\infty-)=\Ew$.  Since every 45 degree diagonal 
	intersects it at a unique point, the equation
	\begin{align}\label{eq:h-(a,t)}
			h=-(\alpha,\f(\alpha))+t(e_1+e_2) 
		\end{align}
defines  a bijection  $\R^2\ni h\longleftrightarrow (\alpha,t)\in(\Ew,\infty)\times\R$  illustrated in Figure \ref{fig:3.3}. 
Combining this with \eqref{f-9} shows that \eqref{eq:h-t} happens for a unique $t$ and for at least one $\xi\in\ri\Uset$.  

\begin{figure}[h]
	\begin{center}
		\begin{tikzpicture}[>=latex,  scale=.4]

			\draw[<->](0,15)--(0,0)--(15,0)node[right]{$\alpha$};
			\draw[dashed] (2,0)node[below]{$m_0$}--(2,15);
			\draw[dashed] (0,2)node[left]{$m_0$}--(15,2);
			\draw[semitransparent] (0,0) -- (13,13);
			
%
			\draw [line width=2.5pt, color= nicosred](3,6)--(4,4)--(6,3);
			\draw[domain=6:15,variable=\x,color=nicosred,line width=2.5pt]plot[smooth]({\x},{2+6/\x});
			\draw[domain=6:15,variable=\y,color=nicosred,line width=2.5pt]plot[smooth]({2+6/\y},{\y});
			
			\draw[semitransparent] (25/4,1)--(15,39/4);
			\draw(11,5.9)node[left]{$t(e_1+e_2)$}; 
			\draw[->, line width=2pt](13,31/4)node[right]{$-h$}--(8,11/4);
			\shade[ball color=red](8,11/4)circle(2.5mm);
			\shade[ball color=red](13,31/4)circle(2.5mm);
			\draw(11.5,11/4) node[above]{$(\alpha, f(\alpha))$}; 
			\shade[ball color=red](13,31/4)circle(2.5mm);
		\end{tikzpicture}
	\end{center}
	\caption{The graph of $\f$ and bijection \eqref{eq:h-(a,t)} between $(\alpha, t)$ and $h$.
			}
	\label{fig:3.3}
\end{figure}
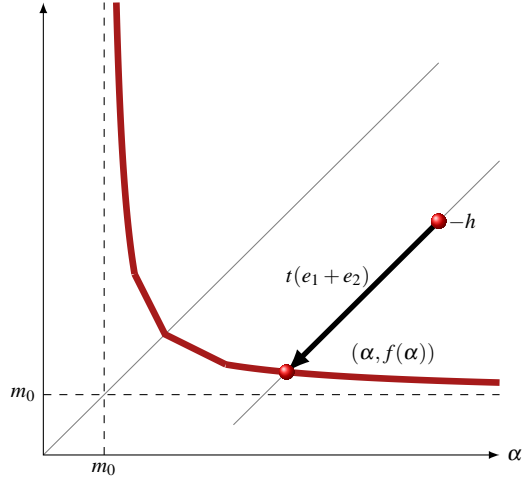

%
%
%
%
%

Once $h$ and $t=t(h)$ are given,  the geometry of the gradients (\eqref{grad g-1}--\eqref{f-9} and   limits \eqref{nabla-g-lim}) can be used to argue the claims about the $\xi$   that satisfy \eqref{eq:h-t}.    This proves part \eqref{th:tilt-velocity:h->xi}.  

That $h$ of the form \eqref{eq:h-t} is dual to $\xi$ follows readily from the fact that gradients are dual and  $\gpl(h+t(e_1+e_2))=\gpl(h)+t$ (this last from Definition \eqref{eq:g:p2l}).   

Note the following general facts for  any $q\in[\nabla\gpp(\zeta+),\nabla\gpp(\zeta-)]$.    By concavity   $\gpp(\eta)\le \gpp(\zeta)+q\cdot(\eta-\zeta)$ for all $\eta$.   Combining this with homogeneity gives $\gpp(\zeta)=q\cdot\zeta$.  Together with duality \eqref{h-xi}  we have 
	\begin{align}\label{g(h)=0}
	 \gpl(-q)=0  \; \text{ for } \; q\in\bigcup_{\zeta\in\ri\Uset}[\nabla\gpp(\zeta+),\nabla\gpp(\zeta-)]. 
	\end{align}  

It remains to show that if  $h$  is dual to $\xi$ then it satisfies \eqref{eq:h-t}.  Let $(\alpha, t)$ be determined by \eqref{eq:h-(a,t)}.  From the last two paragraphs  
\[  \gpl(h)=  \gpl(-\alpha, -\f(\alpha))+t =t. \]
  Let $s=\xi\cdot e_1/\xi\cdot e_2$  so that 
 	\[\gpp(\xi)+ h\cdot\xi=  \frac{\gppa(s)}{1+s}-\frac{\alpha s+\f(\alpha)}{1+s}  +t. \]
 Thus duality $\gpl(h)=\gpp(\xi)+ h\cdot\xi$ implies   $\gppa(s)=\alpha s+\f(\alpha)$ which happens if and only if $\alpha\in[\gppa'(s+),\gppa'(s-)]$.    \eqref{f-9} now implies \eqref{eq:h-t}. \qed
\end{proof}

\section{Stationary cocycles}\label{sec:cocycles}

In this section we describe  the   stationary cocycles, then show how these define stationary last-passage percolation processes and also solve the variational formulas for $\gpp(\xi)$ and $\gpl(h)$.   Assumption  \eqref{2d-ass} is in force but no other assumptions are made.

\subsection{Existence and properties of stationary cocycles} \label{sec:exist} 
   By appeal to queueing fixed points,   in Section \ref{app:q} we construct a family of cocycles  $\{B^\xi_{\pm}\}_{\xi\in\ri\Uset}$  on an extended space   $\Ombig=\Omega\times\Omega'  =\Omega\times \R^{\{1,2\}\times\cA_0\times\Z^2}$  where $\cA_0$ is a countable subset of the interval $(\Ew, \infty)$,  defined   in \eqref{q:A_0} below.      Generic elements  of $\Ombig$ are denoted by 
$\what=(\w, \w')$ where  $\w=(\w_x)_{x\in\Z^2}\in\Omega=\R^{\Z^2}$   is the original weight configuration  and $\w'=(\w^{i,\alpha}_x)_{i\in\{1,2\}, \,\alpha\in\cA_0, \, x\in\Z^2}$.    $\kSbig$ denotes the Borel $\sigma$-algebra of $\Ombig$, and in this context $\kS$ denotes   the sub-$\sigma$-algebra of $\kSbig$ generated by the projection $\what\mapsto\w$.    Spatial translations act   in the usual manner:  $(T_x\what)_y=\what_{x+y}$ for $x,y\in\Z^2$ where $\what_x=(\w_x,  \w'_x)=(\w_x,  (\w^{i,\alpha}_x)_{i\in\{1,2\}, \,\alpha\in\cA_0})$.  

Definition \ref{def:cK} of a cocycle  makes no reference to the LPP process.  
The key feature  that connects cocycles with the   last-passage weights is captured  in the next definition.   It was isolated in our previous paper \cite{Geo-Ras-Sep-13a-} that developed the cocycle set-up for general polymer and percolation models.  
This property is behind all our applications of the cocycles:    construction of stationary LPP in Section \ref{sec:stat-lpp} which in turn is used to prove  Busemann limits,  identification of minimizers of variational formulas in Section \ref{subsec:varsol}, and study of geodesics in \cite{Geo-Ras-Sep-15b-}.  
\begin{definition}
		\label{def:bdry-model}
			A stationary $L^1$ cocycle $B$ on $\Omhat$  {\rm recovers} 
			potential $V$ if 
				\be \label{eq:VB}
					V(\what)=\min_{i\in\{1,2\}} B(\what, 0,e_i)\quad\text{ for }\Pbig\text{-a.e.\ }\what.
				\ee
	 \end{definition}
  In our present case the  potential $V:\Omhat\to\R$  is  simply   $V(\what)=\w_0$, the last-passage weight at the origin.

    The next theorem gives the existence statement  and summarizes the properties of these cocycles.   
  This is the only place where our proofs   use the assumption $\P(\w_0\ge c)=1$,  and the only reason   is that the queueing results we reference have been proved only for $\w_0\ge 0$.    In part \eqref{th:cocycles:indep} below we use  this notation:  for  a finite or infinite set $I\subset\Z^2$,   $I^<=\{x\in\Z^2: x\not\ge z \;\forall z\in I\}$ is the set of lattice points that do not lie on a  ray from $I$ at an angle in $[0,\pi/2]$. For example, if $I=\{0,\dotsc,m\}\times\{0,\dotsc,n\}$ then $I^<=\Z^2\smallsetminus\Z_+^2$.



  \begin{theorem}\label{th:cocycles}
There exist real-valued Borel functions 
$B^\xi_+(\what,x,y)$ and  $B^\xi_-(\what,x,y)$ of  $(\what,\xi,x,y)\in\Ombig\times \ri\Uset\times\Z^2\times\Z^2$   and a translation invariant 
  Borel probability measure $\Pbig$ on  $(\Ombig, \kSbig)$ such that  the following properties hold.  
\begin{enumerate}[\ \ \rm(i)]  
\item\label{th:cocycles:indep} 
Under $\Pbig$, the marginal distribution of the configuration $\w$ is the i.i.d.\ measure $\P$ specified in assumption \eqref{2d-ass}. 
For each $\xi\in\ri\Uset$ and $\pm$, the  $\R^3$-valued  process   
$\{\psi^{\pm, \xi}_x\}_{x\in\Z^2}$
defined by   
\[ \psi^{\pm, \xi}_x(\what)=(\w_x, B^\xi_{\pm}(\what, x,x+e_1), B^\xi_{\pm}(\what, x,x+e_2))\] 
is separately ergodic under both translations $T_{e_1}$ and $T_{e_2}$.  
For any $I\subset\Z^2$,   the variables 
\[ \bigl \{ (\w_x, B^\xi_{+}(\what,x,x+e_i), B^\xi_{-}(\what,x,x+e_i)) : x\in I, \, \xi\in\ri\Uset, \, i\in\{1,2\} \bigr\}\] 
 are  independent of $\{ \w_x : x\in I^< \}$.   \\[-8pt]


 \item\label{th:cocycles:exist}  Each process 
 $B^\xi_{\pm}=\{B^\xi_{\pm}(x,y)\}_{x,y\in\Z^2}$ 
  is a stationary $L^1(\Pbig)$ cocycle {\rm(}Definition \ref{def:cK}{\rm)}  that recovers the  potential   {\rm(}Definition \ref{def:bdry-model}{\rm)}:
  \be\label{Bw-9}  \w_x=B^\xi_{\pm}(\what, x,x+e_1) \wedge B^\xi_{\pm}(\what, x,x+e_2)\qquad \text{$\Pbig$-a.s.}  \ee 
  The mean  vectors   $h_\pm(\xi)=h(B^\xi_{\pm})$ defined by \eqref{eq:EB}  satisfy
					\begin{align}\label{eq:h=grad} 
					-h_\pm(\xi)=
\bigl( \,\Ebig[B^\xi_{\pm}(x,x+e_1)]\,,\,  \Ebig[B^\xi_{\pm}(x,x+e_2)]\,\bigr) =\nabla\gpp(\xi\pm)\end{align}
				and are dual  to velocity $\xi$ as in \eqref{eq:duality}.   \\[-8pt]
				
	\item\label{th:cocycles:flat} 			 No two distinct cocycles have  a common tilt vector.  That is, if $h_+(\xi)=h_-(\zeta)$ then 
	\[  B^\xi_{+}(\what,x,y)=B^\zeta_{-}(\what,x,y)  
	\quad \forall\, \what\in\Ombig, \, x,y\in\Z^2  \] 
and similarly for all four combinations of $\pm$ and $\xi,\zeta$.  
 These equalities  hold    for all $\what$   without an   almost sure modifier    because they come directly   from the construction.   In particular,  if   $\xi\in\Diff$ then 
	\[  	 B^\xi_{+}(\what,x,y)= B^\xi_{-}(\what, x,y) = B^\xi(\what,x,y)\quad \forall \,\what\in\Ombig, \, x,y\in\Z^2, 
	 \]
	where the second equality defines the cocycle $B^\xi$.     \\[-8pt]
	
\item\label{th:cocycles:cont}  
There exists an event $\Ombig_0$ with $\Pbig(\Ombig_0)=1$ and such that {\rm(a)} and {\rm(b)} below  hold for all $\what\in\Ombig_0$, $x,y\in\Z^2$ and  $\xi,\zeta\in\ri\Uset$.  
 
{\rm(a)}  Monotonicity: if     $\xi\cdot e_1<\zeta\cdot e_1$ then  
					\be\begin{aligned}  
						B^\xi_{-}(\what, x,x+e_1) &\ge B^\xi_{+}(\what, x,x+e_1) \ge B^\zeta_{-}(\what, x,x+e_1) \\ \quad\text{and}\quad B^\xi_{-}(\what,x,x+e_2) &\le B^\xi_{+}(\what, x,x+e_2) \le B^\zeta_{-}(\what, x,x+e_2). 
					\end{aligned} \label{eq:monotone} 
					\ee
					
{\rm(b)}	 Right continuity: if $\xi_n\cdot e_1\searrow\zeta\cdot e_1$  then 
					\begin{align}\label{eq:cont}
						\lim_{n\to\infty}B^{\xi_n}_{\pm}(\what, x,y) = B^\zeta_{+}(\what, x,y)  . 
					\end{align}
					
\item\label{th:cocycles:cont-left}   Left continuity at a fixed $\zeta\in\ri\Uset$:   there exists an event $\Ombig^{(\zeta)}$ with $\Pbig(\Ombig^{(\zeta)})=1$ and such that for any sequence $\xi_n\cdot e_1\nearrow\zeta\cdot e_1$  
	\begin{align}\label{eq:cont-left}
						\lim_{n\to\infty}B^{\xi_n}_{\pm}(\what, x,y) = B^\zeta_{-}(\what,x,y)\qquad \text{for $\what\in\Ombig^{(\zeta)}, x,y\in\Z^d$.}   
					\end{align}

Limits \eqref{eq:cont} and \eqref{eq:cont-left}   hold also   in $L^1(\Pbig)$ due to inequalities \eqref{eq:monotone}.  
\end{enumerate} 
\end{theorem}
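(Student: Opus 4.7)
The strategy is to build $B^\xi_{\pm}$ from stationary queueing fixed points, using the connection between the corner growth model and a tandem of $\cdot/G/1/\infty$ queues explained in Remark \ref{rk:q}. For each parameter $\alpha\in(\Ew,\infty)$ the Mairesse--Prabhakar fixed point theorem \cite{Mai-Pra-03} produces a stationary bi-infinite queueing process with service-time marginal equal to that of $\w_0$ and mean interarrival time $\alpha$. Read along the two lattice axes, the inter-departure times and cross-station gaps of this process define a stationary cocycle $B^\alpha$ on $\Z^2$ whose increments satisfy Lindley's recursion; this is exactly the recovery identity $\w_x=B^\alpha(x,x+e_1)\wedge B^\alpha(x,x+e_2)$, and the means are $\Ebig[B^\alpha(0,e_1)]=\alpha$ and $\Ebig[B^\alpha(0,e_2)]=\f(\alpha)$, which by \eqref{f-9} are the coordinates of $-\nabla\gpp(\xi\pm)$ whenever $\alpha\in[\gppa'(s+),\gppa'(s-)]$ with $s=\xi\cdot e_1/\xi\cdot e_2$.

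The plan is to fix a countable dense subset $\cA_0\subset(\Ew,\infty)$ that resolves every one-sided slope $\gppa'(s\pm)$ for a dense set of $s$, and to couple all the $B^\alpha$ for $\alpha\in\cA_0$ on a single extended probability space $\Ombig$, using the auxiliary variables $\w'=(\w^{i,\alpha}_x)$ as driving randomness. Each $B^\alpha$ is to be realized as a deterministic measurable functional of the service times $\w$ together with $\w'$, consistently across $\alpha$, so that the coupling is simultaneously monotone in the sense of \eqref{eq:monotone}: enlarging $\alpha$ increases each $e_1$-increment and decreases each $e_2$-increment. Monotonicity of this type is standard for a single queue under stochastically ordered interarrival inputs, but producing one probability space on which it holds for all of $\cA_0$ at once, while keeping the $\w$-marginal i.i.d.\ and each $B^\alpha$ stationary and $T_{e_i}$-ergodic, is the heart of the construction and the main obstacle; it is the content of Section \ref{app:q}. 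The independence claim in part \eqref{th:cocycles:indep} follows from the construction because each increment of $B^\alpha$ at a site $x$ depends only on $\w_y$ for $y\ngeq x$, so the joint law over a set $I$ depends only on $\w|_{I^<}$; the separate ergodicities are inherited from those of $T_{e_1}$ and $T_{e_2}$ on the product measure through the translation-equivariance of the fixed point.

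With $\{B^\alpha\}_{\alpha\in\cA_0}$ in hand, define $B^\xi_{+}$ and $B^\xi_{-}$ at each $\xi\in\ri\Uset$ as the pointwise monotone limits of $B^{\alpha_n}$ along sequences $\cA_0\ni\alpha_n\searrow\gppa'(s+)$ and $\cA_0\ni\alpha_n\nearrow\gppa'(s-)$, respectively, with $s=\xi\cdot e_1/\xi\cdot e_2$. Monotonicity of the family on $\cA_0$ guarantees existence of these limits pointwise on $\Ombig$ and delivers the inequalities \eqref{eq:monotone}. Right continuity \eqref{eq:cont} for $\xi_n\cdot e_1\searrow\zeta\cdot e_1$ then follows because both $\gppa'(s_n+)$ and $\gppa'(s_n-)$ tend to $\gppa'(s_0+)$ by concavity, so the limits telescope inside $\cA_0$. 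The left continuity at a fixed $\zeta$ in part \eqref{th:cocycles:cont-left} is only a.s.\ and $\zeta$-dependent because, outside $\cA_0$, an approaching sequence $\xi_n\nearrow\zeta$ may yield a different pointwise monotone limit on a null event; this is handled by an $L^1$ identification, using the uniform bound $\Ebig|B^\alpha(0,e_i)|\le\alpha\vee\f(\alpha)$ together with dominated convergence along a countable dense approximating sequence in $\cA_0$.

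The remaining claims are then obtained by passing to the limit. The cocycle property and the recovery identity \eqref{Bw-9} are preserved under pointwise monotone limits of both sides, proving part \eqref{th:cocycles:exist} once $\Ebig[B^\xi_{\pm}(0,e_i)]$ is identified with $\nabla\gpp(\xi\pm)\cdot e_i$ via continuity of $\alpha\mapsto(\alpha,\f(\alpha))$ and \eqref{f-9}; duality with $\xi$ is then immediate from Theorem \ref{th:tilt-velocity}\eqref{th:tilt-velocity:xi-h}. Part \eqref{th:cocycles:flat} holds because the construction makes $B^\xi_{\pm}$ a deterministic measurable function of the tilt vector $h_{\pm}(\xi)$ alone, so any two directions that share a common $-\nabla\gpp$ pick out the same limiting $\alpha\in\cA_0$-parameter and hence produce identical cocycles sure, not merely a.s. The whole argument therefore rests on the simultaneous monotone stationary coupling of Mairesse--Prabhakar fixed points across $\cA_0$; constructing that coupling and verifying that it preserves the i.i.d.\ $\w$-marginal is the main technical task, carried out in Section \ref{app:q}.
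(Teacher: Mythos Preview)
Your overall strategy matches the paper's: build cocycles from Mairesse--Prabhakar queueing fixed points indexed by a countable parameter set $\cA_0$, couple them monotonically through common service times, and extend to all $\xi\in\ri\Uset$ via one-sided limits. The deferral of the hard coupling construction to Section~\ref{app:q} is appropriate.

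There is, however, a concrete error in your independence argument. You write that each increment $B^\alpha(x,x+e_i)$ depends only on $\w_y$ for $y\not\ge x$, and conclude that the joint law over $I$ depends only on $\w|_{I^<}$. This is backwards: if the cocycle at $I$ were a function of $\w|_{I^<}$, it would be \emph{determined by}, not independent of, those weights. In the paper's construction the reflection $(\w_{n,k},\w^{1,\alpha}_{n,k},\w^{2,\alpha}_{n,k})=(S_{-n,-k},A^\alpha_{-n-1,-k+1},W^\alpha_{-n,-k}+S_{-n,-k})$ makes the cocycle increments at $x$ functions of service times $S_{i,j}$ with $(-i,-j)\ge x$, i.e.\ of $\w_y$ with $y\ge x$; this is what yields independence from $I^<$.

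Two further points you gloss over are handled explicitly in the paper and are not routine. First, you take $\cA_0\subset(\Ew,\infty)$ dense without checking that each $\gppa'(s\pm)$ actually lies in the set $\cA$ of parameters admitting an \emph{ergodic} fixed point; this requires Lemma~\ref{q:lm5}, which links gaps in $\cA$ to linear pieces of $f$ and hence to nondifferentiability of $\gppa$. Second, your claim that $T_{e_2}$-ergodicity is ``inherited through translation-equivariance'' is too quick: the paper obtains it by showing that the transposed process $(\Ap,\Sp,\Wp)$ is itself a stationary queueing system with $\{\Ap_{n,0}\}\sim\mu^{f(\alpha)}$ (Lemma~\ref{q:erg-lm}), which in particular requires verifying Lindley's equation \eqref{q:700} in the transposed direction and establishing $f(\alpha)\in\cA$. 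Finally, the joint stationarity of the coupled family in the station index $k$ is not automatic from the queueing iteration; the paper obtains it by a Ces\`aro averaging and weak-limit argument that you do not mention.
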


 The conditional expectations $\Ebig[ B^\xi_{\pm}( x, y)\vert\kS]$ are cocycles that are  functions of $\w$ alone, but the conditioning may destroy crucial property \eqref{Bw-9} that relates the cocycles to the percolation.
Some mild additional regularity on $\gpp$ guarantees that  all cocycles  $\what\mapsto B^\xi_{\pm}(\what, x, y)$   are in fact $\kS$-measurable.  This theorem is proved in Section \ref{sec:busemann}.  

\begin{theorem}\label{thm:kS}  
Assume that  $\gpp$ is differentiable at the endpoints of its linear segments {\rm(}if any{\rm)}.   Then all cocycles $\{B^\xi_{\pm}(x,y)\}_{\xi\in\ri\Uset, \, x,y\in\Z^2}$ from  Theorem \ref{th:cocycles}  are  measurable with respect to {\rm(}the completion of{\rm)} $\kS$.  
\end{theorem}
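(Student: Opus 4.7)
The plan is to reduce the $\kS$-measurability question to the Busemann limit representation of Theorem \ref{thm:buse}, and then extend to non-differentiable directions using the continuity properties of the cocycle family in Theorem \ref{th:cocycles}. The argument has two steps corresponding to $\xi \in \Diff$ and $\xi \in \ri\Uset \smallsetminus \Diff$.

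First I would treat the directions $\zeta \in \Diff$. The regularity assumption ensures that every such $\zeta$ falls under the hypothesis of Theorem \ref{thm:buse}: if $\zeta \in \EP$ then case (i) applies with $\zeta = \eta = \xi$; if instead $\zeta$ is differentiable but not exposed, concavity of $\gpp$ forces $\zeta$ to lie in some closed maximal linear segment $[\zeta_1, \zeta_2]$, whose endpoints lie in $\Diff$ by the standing hypothesis of the theorem, so case (ii) applies. Theorem \ref{thm:buse} then produces a stationary $L^1(\P)$ cocycle $B$ whose values are almost-sure limits of the form $\Gpp_{x, v_n} - \Gpp_{y, v_n}$. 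Since these differences involve only the $\w$-coordinates, $B$ is $\kS$-measurable. Because $B$ has mean $\nabla\gpp(\zeta)$ and hence tilt $-\nabla\gpp(\zeta) = h_\pm(\zeta)$, the uniqueness statement in Theorem \ref{th:cocycles}(iii) forces $B = B^\zeta_+ = B^\zeta_- = B^\zeta$. Thus $B^\zeta$ admits a $\kS$-measurable representative for every $\zeta \in \Diff$.

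Next I would handle the at-most-countable set $\ri\Uset \smallsetminus \Diff$. For each such $\xi$, fix once and for all deterministic sequences $\xi_n^-, \xi_n^+ \in \Diff$ with $\xi_n^- \cdot e_1 \nearrow \xi \cdot e_1$ and $\xi_n^+ \cdot e_1 \searrow \xi \cdot e_1$; these exist because $\Diff$ is co-countable and hence dense in $\ri\Uset$. By part (iv)(b) of Theorem \ref{th:cocycles} applied on $\Ombig_0$, $B^{\xi_n^+} \to B^\xi_+$ pointwise, and by part (v) applied with $\zeta = \xi$ on the full-measure event $\Ombig^{(\xi)}$, $B^{\xi_n^-} \to B^\xi_-$. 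Intersecting $\Ombig_0$ with the countable family $\{\Ombig^{(\xi)}\}_{\xi \in \ri\Uset \smallsetminus \Diff}$ yields a single full-measure event on which both limits hold for every non-differentiable direction. Since each $B^{\xi_n^\pm}$ has a $\kS$-measurable representative by the previous step, so do the pointwise limits $B^\xi_\pm$, which is precisely measurability with respect to the completion of $\kS$.

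The main technical obstacle is the identification of the Busemann-limit cocycle from Theorem \ref{thm:buse} with the specific cocycle $B^\zeta$ produced in the queueing construction of Theorem \ref{th:cocycles}; this is exactly the role played by the uniqueness-of-tilt statement in part (iii) of that theorem. Once this identification is in hand, the remainder is bookkeeping: density of $\Diff$ in $\ri\Uset$, monotonicity of $\xi \mapsto B^\xi_\pm$, and countability of the exceptional set together reduce everything to a single null-set intersection.
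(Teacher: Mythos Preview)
Your overall structure matches the paper's proof exactly: handle $\zeta\in\Diff$ via the Busemann limit, then reach non-differentiable $\xi$ as one-sided limits using parts \eqref{th:cocycles:cont}--\eqref{th:cocycles:cont-left} of Theorem \ref{th:cocycles}. The second step is fine as written.

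There is, however, a genuine gap in your first step. You invoke Theorem \ref{th:cocycles}\eqref{th:cocycles:flat} to identify the cocycle $B$ produced by Theorem \ref{thm:buse} with $B^\zeta$. But part \eqref{th:cocycles:flat} is a statement \emph{internal to the constructed family} $\{B^\xi_{\pm}\}_{\xi\in\ri\Uset}$: it says that two cocycles from this family coincide whenever their tilt vectors agree. It does not assert that an arbitrary stationary $L^1$ cocycle with tilt $-\nabla\gpp(\zeta)$ must equal $B^\zeta$, and in general that would be false. So the identification $B=B^\zeta$ does not follow from the mean/tilt alone.

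The paper avoids this issue by citing Corollary \ref{cor:buse} (equivalently Theorem \ref{th:construction}) rather than Theorem \ref{thm:buse}. Corollary \ref{cor:buse} states directly that, when $\xi,\ximin,\ximax\in\Diff$, the specific cocycle $B^\xi$ from Theorem \ref{th:cocycles} is the almost-sure limit of $\Gpp_{x,v_n}-\Gpp_{y,v_n}$. That gives $\kS$-measurability of $B^\zeta$ immediately, with no separate identification step needed. Once you replace your appeal to Theorem \ref{thm:buse} plus part \eqref{th:cocycles:flat} by a direct appeal to Corollary \ref{cor:buse}, your argument is complete and coincides with the paper's.
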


\smallskip

\begin{remark}\label{rm:cocycles} 
 The  cocycle construction of Theorem \ref{th:cocycles} undertaken  in Section \ref{app:q} utilizes   a   countable dense subset $\Uset_0$ of $\Uset$ such that, for $\xi\in\Uset_0$,    nearest-neighbor cocycle values are coordinate projections   $B^\xi_{\pm}(\what, x,x+e_i)=\w^{i, \gppa'(s\pm)}_x$   
 where $s$ is defined by \eqref{s-xi}.
 $\Uset_0$   contains all points of nondifferentiability and endpoints of linear segments of $\gpp$.  
  For   $\zeta\in(\ri\Uset)\smallsetminus\Uset_0$   we define $B^\zeta=B^\zeta_\pm$ through right limits from  $\{B^\xi_{\pm}\}_{\xi\in\Uset_0}$.     This is behind  Theorem \ref{th:cocycles}(\ref{th:cocycles:flat})--\eqref{th:cocycles:cont}.    Monotonicity \eqref{eq:monotone} simultaneously for all $B^\xi_\pm$  outside a single null set will be convenient for constructing geodesics in   \cite{Geo-Ras-Sep-15b-}. 
   
     The cocycle properties and \eqref{Bw-9} can also be arranged to hold simultaneously  for all $B^\xi_\pm$ outside a single null set, if so desired.  
  But for  left and right limits  to  agree at a particular $\zeta$ we have to allow for the  exceptional $\Pbig$-null event $(\Ombig^{(\zeta)})^c$  that is specific to $\zeta$.   Thus left  limit \eqref{eq:cont-left} is not claimed for all $\zeta$ outside a single null set.  
  
Other conventions are possible in the construction.  We could extend $B^\xi_\pm$ from $\Uset_0$ so that    $B^\xi_+$ is  right-continuous and $B^\xi_-$   left-continuous in $\xi$.  Monotonicity \eqref{eq:monotone} would still hold outside a single null set,  but   $B^\xi_+=B^\xi_-$ would be only almost surely true for a given $\xi\in\EP$, instead of identically true.  

\end{remark}

\subsection{Stationary last-passage percolation} \label{sec:stat-lpp}

Fix a  cocycle $B(\what, x,y)=B^\xi_+(\what, x,y)$ or $B^\xi_-(\what, x,y)$  from Theorem \ref{th:cocycles}.  Fix a point $v\in\Z^2$ that will serve as an origin.   
By part \eqref{th:cocycles:indep} of Theorem \ref{th:cocycles}, the weights $\{\w_x: x\le v-e_1-e_2\}$ are independent of  $\{ B(v-(k+1)e_i, v-ke_i): k\in\Z_+, \, i\in\{1,2\}\}$.  These define a stationary last-passage percolation process in the third quadrant relative to the  origin $v$, in the following sense.   Define  passage times $\Gne_{u,v}$ that  use the cocycle as 
edge weights   on the 
north and  east boundaries   and  weights  $\w_x$  in the bulk  $x\le v-e_1-e_2 $: 	\begin{align}
	\label{eq:Gnecorner}
		\begin{split}
		\Gne_{u,v}&=B(u,v)\quad\text{for $u\in\{v-ke_i: k\in\Z_+, \, i\in\{1,2\}\}$}\\
	\qquad 	\text{and} \qquad 		\Gne_{u,v}
			&=\w_u+\Gne_{u+e_1,v}\vee \Gne_{u+e_2,v}\qquad \text{for } \  u\le v-e_1-e_2\,.
		\end{split}
	\end{align}
It is immediate from  recovery 
$\w_x=B(x,x+e_1)\wedge B(x,x+e_2)$     and additivity of $B$ that 
\[\Gne_{u,v}=  B(u,v)  \qquad \text{for all $u\le v$.  } \] 
Process $\{\Gne_{u,v}: u\le v\}$ is stationary in the sense that the increments
\be\label{stat-lpp5} \Gne_{x,v}-\Gne_{x+e_i,v} = B(x,x+e_i)  \ee
are stationary under lattice translations and, as the equation above reveals,  do not depend on the choice of the origin $v$ (as long as we stay southwest of the origin).

 \begin{remark}   In the exactly solvable cases where $\w_x$ is either exponential or geometric,  more is known.  Given the stationary cocycle,  define weights 
 \[Y_x=B(x-e_1,x)\wedge B(x-e_2,x).  \]
 Then the  weights   $\{Y_x\}$ have the same i.i.d.\ distribution as the original weights $\{\w_x\}$.  Furthermore,  $\{Y_x: x\ge v+e_1+e_2\}$ are independent of  $\{ B(v+ke_i, v+(k+1)e_i): k\in\Z_+, \, i\in\{1,2\}\}$.   Hence a  stationary last-passage percolation process can be defined   in the first quadrant with cocycles on the south and west boundaries: 
\begin{align*} 	
		\begin{split}
		\Gsw_{v, x}&=B(v,x)\quad\text{for $x\in\{v+ke_i: k\in\Z_+, \, i\in\{1,2\}\}$}\\
	\qquad 	\text{and} \qquad 		\Gsw_{v,x}
			&=Y_x+\Gsw_{v, x-e_1}\vee \Gsw_{v, x-e_2}\qquad \text{for } \  x\ge v+e_1+e_2\,.
		\end{split}
	\end{align*}

   This feature appears in \cite{Bal-Cat-Sep-06}  as the ``Burke property'' of the exponential last-passage model.  It also works for the log-gamma polymer in positive temperature \cite{Geo-etal-15-, Sep-12-corr}.  We do not know presently if this works in the general last-passage case. 
 \end{remark}
  
\subsection{Solution to the variational formulas}\label{subsec:varsol}

In this section we construct 
  minimizers for   variational formulas  \eqref{eq:g:K-var}--\eqref{eq:gpp:K-var} in terms of the cocycles from Theorem \ref{th:cocycles}.    The theorems below are   versions of    Theorem \ref{thm:var-buse}   that  we can state without extra regularity assumptions on the shape function $\gpp$.     The proof of Theorem \ref{thm:var-buse} has to wait till  Section \ref{sec:busemann},  where  we   identify the  minimizing cocycles used below  as Busemann functions under regularity assumptions on $\gpp$.      
  
    Recall from Theorem \ref{th:tilt-velocity} that $\gpp$ is linear over each line segment   $[\,\ximin(h),\ximax(h)]$ and hence, 
by  Theorem \ref{th:cocycles}\eqref{th:cocycles:flat}, 
cocycles $B^\xi$   coincide for all $\xi\in\,]\ximin(h),\ximax(h)[$.


\begin{theorem}\label{th:var-sol}
	Let $\{ B^\xi_{\pm}\}$ be the cocycles given  in  Theorem \ref{th:cocycles}.  Fix $h\in\R^2$. Let $t(h)$, $\ximin(h)$, and $\ximax(h)$ be as in Theorem \ref{th:tilt-velocity}. One has the following three cases.
		\begin{enumerate}[\ \ \rm(i)]
			\item\label{th:var-sol:flat} $\ximin(h)\ne\ximax(h)$:   For any {\rm(}and hence all{\rm)} $\xi\in\; ]\ximin(h),\ximax(h)[\,$  let 
				\begin{align}
				\label{eq:F-solution}
					F^h(\what, x,y)=h(\xi)\cdot(x-y)-B^\xi(\what, x,y). 
				\end{align}
	Then for $\Pbig$-almost every $\what$
			\begin{align}
			\label{eq:solution}
				\gpl(h)=\max_{i=1,2}\{\w_0+h\cdot e_i+F^h(\what, 0,e_i)\} = t(h).
			\end{align}
		\item\label{th:var-sol:diff} $\ximin(h)=\ximax(h)=\xi\in\Diff$: \eqref{eq:solution} holds for $F^h$ defined as in \eqref{eq:F-solution}. 
		
		\item\label{th:var-sol:notdiff} $\ximin(h)=\ximax(h)=\xi\not\in\Diff$: Let $\theta\in[0,1]$ be such that \[ h-t(h)(e_1+e_2)=\theta h_-(\xi)+(1-\theta)h_+(\xi)\] and define 
			\begin{align*}
				&F^{\xi\pm}(\what,x,y)=h_\pm(\xi)\cdot(x-y)-B^\xi_{\pm}(\what,x,y)\quad\text{and}\\
				&F^h(\what,x,y)=\theta F^{\xi-}(\what,x,y)+(1-\theta)F^{\xi+}(\what,x,y).
			\end{align*}
				If $\theta\in\{0,1\}$,   \eqref{eq:solution} holds again almost surely.  
For all $\theta\in[0,1]$ we have 
		\be \label{eq:solution2}	\begin{aligned}
							\gpl(h)&=
							\P\text{-}\esssup_{\w} \max_{i=1,2} \,\{\w_0+h\cdot e_i+\Ebig[ F^h(0,e_i)\vert\kS] \,\}  \\
							&=\Pbig\text{-}\esssup_{\what}\,\max_{i=1,2}\,\{\w_0+h\cdot e_i+F^h(\what, 0,e_i) \} = t(h).
			\end{aligned}\ee
	\end{enumerate}
	 In particular, in all cases \eqref{th:var-sol:flat}--\eqref{th:var-sol:notdiff},    $\Ebig[ F^h(x,y)\vert\kS]\in \Coc_0(\Omega)$ is a minimizer in \eqref{eq:g:K-var}. 
\end{theorem}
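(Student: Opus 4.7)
The unifying strategy is that when the explicit form of $F^h$ is substituted into $\w_0 + h\cdot e_i + F^h(\what,0,e_i)$, the tilt $h$ and the cocycle mean combine via Theorem~\ref{th:tilt-velocity} to produce the constant $t(h)$, leaving a quantity of the form $\w_0 + t(h)$ minus a term that is at least $\w_0$ by the recovery property \eqref{Bw-9}. This yields the pointwise upper bound $\max_i\le t(h)$; matching it from below is handled by the variational formula \eqref{eq:g:K-var}.

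Cases (i) and (ii) are unified by $\xi\in\Diff$: from \eqref{eq:h=grad} and Theorem~\ref{th:tilt-velocity} one has $h - t(h)(e_1+e_2) = -\nabla\gpp(\xi) = h(\xi)$, so $(h-h(\xi))\cdot e_i = t(h)$ for $i\in\{1,2\}$. In case (i), $B^\xi$ does not depend on the choice of $\xi\in\, ]\ximin(h),\ximax(h)[\,$ because of the matching of cocycles with common tilt in Theorem~\ref{th:cocycles}, so $F^h$ is well-defined. Substitution gives
\[
\w_0 + h\cdot e_i + F^h(\what,0,e_i) \;=\; \w_0 + t(h) - B^\xi(\what,0,e_i).
\]
Taking the max over $i$ and using the recovery identity $\w_0 = B^\xi(\what,0,e_1)\wedge B^\xi(\what,0,e_2)$ from \eqref{Bw-9} collapses the right side to $t(h)$ $\Pbig$-almost surely. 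Since $\gpl(h)=t(h)$ by Theorem~\ref{th:tilt-velocity}, this establishes \eqref{eq:solution}.

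In case (iii), the same algebra with $h - t(h)(e_1+e_2) = \theta h_-(\xi) + (1-\theta) h_+(\xi)$ yields
\[
\w_0 + h\cdot e_i + F^h(\what,0,e_i) \;=\; \w_0 + t(h) - \theta B^\xi_-(\what,0,e_i) - (1-\theta)B^\xi_+(\what,0,e_i).
\]
Recovery \eqref{Bw-9} applied separately to $B^\xi_\pm$ gives $B^\xi_\pm(\what,0,e_i)\ge\w_0$ for both $i$, so the convex combination is also $\ge\w_0$, and hence $\max_i\le t(h)$ almost surely. When $\theta\in\{0,1\}$ recovery for the chosen $B^\xi_\pm$ produces equality at the minimizing $i$ and \eqref{eq:solution} follows at once. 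For general $\theta\in[0,1]$, $F^h$ is a convex combination of centered cocycles on $\Ombig$, so $\Ebig[F^h(\cdot)\vert\kS]\in\Coc_0(\Omega)$ by stationarity of $\Pbig$ and linearity of conditional expectation. The variational formula \eqref{eq:g:K-var} then gives $\gpl(h)\le \P\text{-}\esssup_\w\max_i\{\w_0+h\cdot e_i+\Ebig[F^h(0,e_i)\vert\kS]\}$, while Jensen's inequality for the convex function $\max$ bounds this above by the $\Pbig\text{-}\esssup$ of $\max_i\{\w_0+h\cdot e_i+F^h(\what,0,e_i)\}$, itself at most $t(h)=\gpl(h)$ by the preceding pointwise inequality. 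Both essential suprema therefore equal $t(h)$, giving \eqref{eq:solution2}.

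The main obstacle is case (iii) with $\theta\in(0,1)$: no pointwise identity is available because $\theta B^\xi_-(0,e_i)+(1-\theta)B^\xi_+(0,e_i)$ need not equal $\w_0$ at either coordinate direction, so one genuinely loses a factor compared to the differentiable cases. The remedy is exactly the detour through \eqref{eq:g:K-var} and Jensen sketched above, which recovers minimality in the weaker essential-supremum sense. All other ingredients—well-definedness of $F^h$ on linear segments, the conditional-expectation bookkeeping that places $\Ebig[F^h\vert\kS]$ in $\Coc_0(\Omega)$, and the identification $\gpl(h)=t(h)$—have already been isolated in Theorems~\ref{th:tilt-velocity} and~\ref{th:cocycles}.
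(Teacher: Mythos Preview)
Your proof is correct and follows essentially the same route as the paper's: reduce $\w_0+h\cdot e_i+F^h(0,e_i)$ to $\w_0+t(h)-B^\xi_\pm(0,e_i)$ (or the convex combination), apply recovery \eqref{Bw-9} to get the pointwise bound $t(h)$, and in case~(iii) sandwich with the variational formula \eqref{eq:g:K-var} to close the loop. One small citation point: the identity $\gpl(h)=t(h)$ is not in the \emph{statement} of Theorem~\ref{th:tilt-velocity} but is established in its proof via \eqref{g(h)=0} and the diagonal shift $\gpl(h+t(e_1+e_2))=\gpl(h)+t$; the paper re-derives this inside the proof of Theorem~\ref{th:var-sol}, so you may want to make that derivation explicit rather than citing the theorem.
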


Here are qualitative descriptions of the cases above. 

(i) The graph  of $\f$ has a corner at the point $(\alpha, \f(\alpha))$ where it crosses the $45^o$ line through $-h$.  Correspondingly,  
$\gpp$ is linear on $[\ximin(h),\ximax(h)]$ with gradient $\nabla\gpp(\xi)=(\alpha, \f(\alpha))$ at  interior points $\xi\in\; ]\ximin(h),\ximax(h)[$. 

(ii)  The unique   $\xi$ dual  to $h$ lies in $\EP$. 

(iii)   The unique   $\xi$ dual  to $h$ is exposed but not in $\Diff$.  

\begin{proofof}{of Theorem \ref{th:var-sol}}
 Let $B$ be one of   $B^\xi, B^\xi_\pm$  and define centered cocycle $F$  by   \eqref{FF}.    	 By \eqref{eq:h=grad} and \eqref{g(h)=0},  $\gpl(h(B))=0$.
Then directly from definitions  \eqref{Gnh} and  \eqref{eq:g:p2l},    $\gpl(h)=(h-h(B))\cdot e_j$ for $j\in\{1,2\}$ for  any $h\in\R^2$ that 
satisfies $(h-h(B))\cdot(e_2-e_1)=0$.    Hence by recovery \eqref{Bw-9}, for these same $h$-values,  for $\Pbig$-a.e.\ $\what$ and $j\in\{1,2\}$, 
\be \label{eq:Kvar:min}	\begin{aligned}
	\gpl(h)&\;=\;\max_{i\in\{1,2\}}\, \{ \w_0+h\cdot e_i+F(\what, 0,e_i) \}
	\;=\;(h-h(B))\cdot e_j .
	\end{aligned}\ee
  (This situation is developed for  general models in  Theorem 3.4 of \cite{Geo-Ras-Sep-13a-}.)


		In cases \eqref{th:var-sol:flat} and \eqref{th:var-sol:diff}, \eqref{eq:solution}  comes from   \eqref{eq:Kvar:min} combined with \eqref{eq:h-t}  and \eqref{eq:h=grad}. 
	  The same   works in case \eqref{th:var-sol:notdiff} when $\theta\in\{0,1\}$.
	
	Consider  case \eqref{th:var-sol:notdiff}.  
	Using  Theorem \ref{th:tilt-velocity} and  $\gpl(h_\pm(\xi))=0$ gives 
		\begin{align*}
			\gpl(h)&=\gpp(\xi)+h\cdot\xi\\
				  &=t(h)+\theta(\gpp(\xi)+h_-(\xi)\cdot\xi)+(1-\theta)(\gpp(\xi)+h_+(\xi)\cdot\xi)\\
			          &=t(h)+\theta\gpl(h_-(\xi))+(1-\theta)\gpl(h_+(\xi)) 
			          =t(h).
		\end{align*}
By \eqref{Bw-9},   $\Pbig$-almost surely 
		\be\label{aux-67}  \min\{\theta B^\xi_-(0,e_1)+(1-\theta)B^\xi_+(0,e_1)\,,\,\theta B^\xi_-(0,e_2)+(1-\theta)B^\xi_+(0,e_2)\}\ge\w_0.\ee 
Since $\Ebig[ F^h(x,y)\vert\kS]$ is a member of $\Coc_0(\Omega)$,   we can use 	\eqref{eq:g:K-var}  to verify \eqref{eq:solution2}: 
\be\label{aux-68} \begin{aligned}
\gpl(h)&\le  \P\text{-}\esssup_{\w} \max_{i=1,2}\, \{\w_0+h\cdot e_i+\Ebig[ F^h(0,e_i)\vert\kS] \,\}  \\
&\le \Pbig\text{-}\esssup_{\what} \max_{i=1,2} \,\{\w_0+h\cdot e_i+F^h(\what, 0,e_i)\}   \le t(h) = \gpl(h).
\end{aligned}\ee 	
The last inequality above is just a rearrangement of 	\eqref{aux-67}.  	
The same inequalities show that  $\Ebig[ F^h(0,e_i)\vert\kS]$  minimizes in \eqref{eq:g:K-var}  in cases \eqref{th:var-sol:flat} and \eqref{th:var-sol:diff}.   
\qed\end{proofof}

We state also the corresponding theorem for the point-to-point case. Using the duality \eqref{eq:duality} of $h_\pm(\xi)$ and  $\xi$,  it follows as the theorem above.   

\begin{theorem}\label{thm:var-p2p}
	Let $\xi\in\ri\Uset$.  Then  for $\Pbig$-a.e.~$\what$
			\begin{align}
			\label{eq:solution-pp}
			\begin{split}
				\gpp(\xi)=\max_{i=1,2} \,\{\w_0-B^\xi_{\pm}(\what, 0,e_i)-h_\pm(\xi)\cdot\xi \}  = -h_\pm(\xi)\cdot\xi.  
			\end{split}
			\end{align}
For any 	$\theta\in[0,1]$,   cocycle $\Ebig[  \theta B^\xi_{-}+(1-\theta) B^\xi_{+}\vert\kS]\in\Coc(\Omega)$ is  a minimizer in \eqref{eq:gpp:K-var}. 
\end{theorem}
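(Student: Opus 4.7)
\begin{proofof}{of Theorem \ref{thm:var-p2p} (proposal)}{}
The plan is to mirror the structure of the proof of Theorem \ref{th:var-sol}, using recovery \eqref{Bw-9} for the ``$\max_i$'' part and the identity $\gpp(\xi)=-h_\pm(\xi)\cdot\xi$ for the constant part. The main inputs are Theorem \ref{th:cocycles}\eqref{th:cocycles:exist} (integrability, recovery, mean vectors) and the variational formula \eqref{eq:gpp:K-var}.

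\textbf{Step 1: Identity \eqref{eq:solution-pp}.} Recovery \eqref{Bw-9} gives, $\Pbig$-almost surely,
\[
\max_{i\in\{1,2\}}\{\w_0-B^\xi_\pm(\what,0,e_i)\}=\w_0-\min_{i\in\{1,2\}}B^\xi_\pm(\what,0,e_i)=0.
\]
On the other hand, by \eqref{eq:h=grad}, $-h_\pm(\xi)=\nabla\gpp(\xi\pm)$, and the general fact from the proof of Theorem \ref{th:tilt-velocity} (concavity plus homogeneity of $\gpp$) yields $\gpp(\xi)=q\cdot\xi$ for any $q\in[\nabla\gpp(\xi+),\nabla\gpp(\xi-)]$. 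Applied to $q=-h_\pm(\xi)$ this gives $\gpp(\xi)=-h_\pm(\xi)\cdot\xi$. Combining these two observations yields \eqref{eq:solution-pp}.

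\textbf{Step 2: $B_\theta$ satisfies an analogous bound.} Set $B_\theta=\theta B^\xi_-+(1-\theta)B^\xi_+$. Since $\min_i(\cdot)$ is concave in each coordinate, recovery gives $\min_i B_\theta(0,e_i)\ge \theta\w_0+(1-\theta)\w_0=\w_0$ a.s., hence
\[
\max_{i\in\{1,2\}}\{\w_0-B_\theta(\what,0,e_i)\}\le 0.
\]
The tilt satisfies $h(B_\theta)=\theta h_-(\xi)+(1-\theta)h_+(\xi)$, so by Step 1 one has $-h(B_\theta)\cdot\xi=\gpp(\xi)$ as well. Therefore $\Pbig$-a.s.
\[
\max_{i\in\{1,2\}}\{\w_0-B_\theta(\what,0,e_i)-h(B_\theta)\cdot\xi\}\le \gpp(\xi).
\]

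\textbf{Step 3: Minimality of the conditional cocycle.} Let $\wh B_\theta=\Ebig[B_\theta\vert\kS]$. Then $\wh B_\theta\in\Coc(\Omega)$ with $h(\wh B_\theta)=h(B_\theta)$, because conditional expectation preserves cocycle properties and means. Since $\w_0$ and the constant $h(B_\theta)\cdot\xi$ are $\kS$-measurable, conditional Jensen gives
\[
\max_{i\in\{1,2\}}\{\w_0-\wh B_\theta(0,e_i)-h(B_\theta)\cdot\xi\}\le \Ebig\bigl[\max_{i\in\{1,2\}}\{\w_0-B_\theta(0,e_i)-h(B_\theta)\cdot\xi\}\,\bigm|\,\kS\bigr]\le \gpp(\xi).
\]
Inserting $\wh B_\theta$ into \eqref{eq:gpp:K-var} shows that the infimum is attained. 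Conversely, \eqref{eq:gpp:K-var} provides the lower bound $\gpp(\xi)$, closing the loop and proving that $\wh B_\theta$ is a minimizer.

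\textbf{Expected obstacle.} The steps above are essentially algebraic once the machinery of Theorem \ref{th:cocycles} is in place; the only nontrivial points are (i) verifying that the conditional expectation actually lies in $\Coc(\Omega)$ with the correct mean, which is a routine consequence of translation invariance of $\Pbig$ and the fact that the marginal in $\w$ under $\Pbig$ is $\P$, and (ii) the a.s.\ identity $\gpp(\xi)=-h_\pm(\xi)\cdot\xi$, whose verification reduces to the homogeneity-plus-concavity computation already carried out in Theorem \ref{th:tilt-velocity}. No further regularity on $\gpp$ is needed.\qed
\end{proofof}
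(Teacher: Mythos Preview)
Your proposal is correct and follows essentially the same approach as the paper. The paper's own proof is a one-line pointer (``Using the duality \eqref{eq:duality} of $h_\pm(\xi)$ and $\xi$, it follows as the theorem above''), referring back to the proof of Theorem \ref{th:var-sol}, and your Steps~1--3 are precisely the point-to-point analogues of the ingredients there: recovery \eqref{Bw-9} to get the max equal to zero, the homogeneity identity $\gpp(\xi)=-h_\pm(\xi)\cdot\xi$ (which the paper records just before \eqref{g(h)=0}), the convex-combination inequality \eqref{aux-67}, and the conditional-Jensen-plus-variational-formula sandwich \eqref{aux-68}.
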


\section{Busemann functions from cocycles}
\label{sec:busemann}

In this section we prove the  existence of   Busemann functions.   As before,     \eqref{2d-ass} is a standing assumption.   No regularity assumptions are made on $\gpp$, hence the results below are in terms of inequalities for  limsup and liminf.     Under additional regularity assumptions,  the sharper theorems claimed in earlier sections  are derived as corollaries.

Recall the line segment $\Uset_\xi=[\ximin, \ximax]$ with $\ximin\cdot e_1\le \ximax\cdot e_1$ from \eqref{eq:sector1}--\eqref{eq:sector2}
and the cocycles $B^\xi_{\pm}$ constructed on the extended space $\OBPbig$ in Theorem \ref{th:cocycles}.   

\begin{theorem}\label{th:construction} 
Fix a possibly degenerate segment   $[\zeta,\eta]\subset\ri\Uset$.  Assume    either that there is an   exposed  point $\xi$  such that  $\xi=\ximin=\ximax$ and  $[\zeta,\eta]=\{\xi\}=[\,\ximin,\ximax\,]$,  or that  $[\zeta,\eta]$  a maximal nondegenerate  linear segment of $\gpp$ in which case  $[\zeta,\eta]=[\,\ximin,\ximax\,]$ for any $\xi\in]\zeta,\eta[$.
Then there exists an event  $\Ombig_0$ with $\Pbig(\Ombig_0)=1$ such that for each 
$\what\in\Ombig_0$ and  for any  sequence $v_n\in\Z_+^2$ that satisfies 
\begin{align}\label{eq:vn66}
\abs{v_n}_1\to\infty\quad\text{and}\quad\ximin\cdot e_1\le\varliminf_{n\to\infty}\frac{v_n\cdot e_1}{\abs{v_n}_1}\le\varlimsup_{n\to\infty}\frac{v_n\cdot e_1}{\abs{v_n}_1}\le\ximax\cdot e_1,
\end{align}
  we have for all $x\in\Z^2$ 
\be\label{bu:G1}  \begin{aligned}   B^{\ximax}_{+}(\what, x,x+e_1)&\le \varliminf_{n\to \infty} \big( \Gpp_{x, v_n}(\w) - \Gpp_{x+e_1, v_n}(\w) \big) \\ &\le \varlimsup_{n\to \infty} \big( \Gpp_{x, v_n}(\w) - \Gpp_{x+e_1, v_n}(\w)  \big) \le B^{\ximin}_{-}(\what,  x,x+e_1)  
\end{aligned}  \ee
and 
\be\label{bu:G2}  
\begin{aligned}  B^{\ximin}_{-}(\what, x,x+e_2)  &\le \varliminf_{n\to \infty} \big( \Gpp_{x, v_n}(\w) - \Gpp_{x+e_2, v_n}(\w) \big)\\
&\le \varlimsup_{n\to \infty} \big( \Gpp_{x, v_n}(\w) - \Gpp_{x+e_2, v_n}(\w) \big) \le B^{\ximax}_{+}(\what, x,x+e_2).  
\end{aligned}  \ee
\end{theorem}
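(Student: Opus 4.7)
The plan is to sandwich the bulk gradient $\Gpp_{x,v_n}-\Gpp_{x+e_i,v_n}$ between quantities determined by the endpoint cocycles $B^{\ximin}_-$ and $B^{\ximax}_+$, by coupling the bulk LPP with the stationary NE-LPP of Section~\ref{sec:stat-lpp} and exploiting that under hypothesis \eqref{eq:vn66} any limit direction $\xi_*=\lim v_n/\abs{v_n}_1$ must lie in the segment $[\ximin,\ximax]$, where $\gpp$ is linear.

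For the upper bound in \eqref{bu:G1}, take $B=B^{\ximin}_-$. The stationary construction \eqref{eq:Gnecorner} combined with recovery \eqref{Bw-9} yields $\Gne_{u,v_n}=B(u,v_n)$ identically in $u\le v_n$. Since cocycle increments satisfy $B(y,y+e_i)\ge\w_y$, the cocycle sum along either boundary ray dominates the corresponding $\w$-sum, so the stationary model weights every bulk path to $v_n$ no less than the bulk model does, giving $\Gpp_{x,v_n}\le B(x,v_n)$. Setting $\Delta_n(y):=B(y,v_n)-\Gpp_{y,v_n}\ge 0$, additivity of $B$ produces the identity
\[
\Gpp_{x,v_n}-\Gpp_{x+e_1,v_n}=B(x,x+e_1)+\bigl[\Delta_n(x+e_1)-\Delta_n(x)\bigr].
\]
Applying the cocycle ergodic theorem of Appendix~\ref{app:aux} with $-h(B)=\nabla\gpp(\ximin)$ from Theorem~\ref{th:cocycles}\eqref{th:cocycles:exist}, combined with the shape theorem \eqref{lln5} and the linearity of $\gpp$ on $[\ximin,\ximax]$, both $\abs{v_n}_1^{-1}B(0,v_n)$ and $\abs{v_n}_1^{-1}\Gpp_{0,v_n}$ converge to the common value $\gpp(\xi_*)$ along any subsequence along which $\xi_*$ exists. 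Hence $\abs{v_n}_1^{-1}\Delta_n(y)\to 0$ almost surely.

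The main obstacle is that this density-zero decay of the nonnegative quantity $\Delta_n$ does not by itself control the pointwise difference $\Delta_n(x+e_1)-\Delta_n(x)$. The plan is an exit-point/crossing argument applied to the stationary geodesic from $x$ to $v_n$, run with a perturbed cocycle $B^{\zeta'}_-$ whose index satisfies $\zeta'\cdot e_1<\ximin\cdot e_1$: since the characteristic direction of $B^{\zeta'}_-$ then lies strictly to the left of $\xi_*$, the stationary geodesic is forced to exit on the south boundary, which pins down the sign of the remainder and gives $\varlimsup_n(\Gpp_{x,v_n}-\Gpp_{x+e_1,v_n})\le B^{\zeta'}_-(x,x+e_1)$. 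The monotonicity \eqref{eq:monotone} and left-continuity \eqref{eq:cont-left} of the cocycle family then allow $\zeta'\cdot e_1\nearrow\ximin\cdot e_1$ to recover the upper bound of \eqref{bu:G1}. A symmetric argument with $B^{\zeta'}_+$, $\zeta'\cdot e_1>\ximax\cdot e_1$, combined with right-continuity \eqref{eq:cont}, yields the liminf bound. The companion inequality \eqref{bu:G2} follows by the same scheme with the roles of $\ximin$ and $\ximax$ interchanged, reflecting the reversed sign in the $e_2$-coordinate of \eqref{eq:monotone}.
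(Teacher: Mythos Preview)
Your second-half strategy—perturb the cocycle index off the segment, run an exit-direction argument for the stationary LPP, then push the perturbation back using monotonicity and one-sided continuity of the cocycle family—is exactly what the paper does. But the execution has real gaps.

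First, the boundary is wrong. The stationary LPP of Section~\ref{sec:stat-lpp} puts cocycle weights on the \emph{north and east} boundaries of the rectangle with upper-right corner $v_n$; there is no south boundary in this picture. With $\zeta'\cdot e_1<\ximin\cdot e_1$ the characteristic direction lies above (to the left of) the direction of $v_n$, and the conclusion of the exit lemma (Lemma~\ref{lm:exit:con}) is that for large $n$ the stationary geodesic hits the \emph{north} boundary.

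Second, and more importantly, knowing where the stationary geodesic exits does not by itself give the increment bound $\varlimsup(\Gpp_{x,v_n}-\Gpp_{x+e_1,v_n})\le B^{\zeta'}(x,x+e_1)$. The paper bridges this with the comparison Lemma~\ref{lm:new:comp0} (monotonicity of LPP increments in the endpoint), applied to an auxiliary process $\Gn$ that carries cocycle weights only on the north row and bulk weights elsewhere. One first dominates the bulk increment by the $\Gn$-increment at the shifted endpoint $v_n+e_2$ (this is pure Lemma~\ref{lm:new:comp0} in a single weight system), then identifies $\Gn$ with the full stationary $\Gne$ once the geodesic is known to avoid the east boundary. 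Your line about the exit ``pinning down the sign of the remainder $\Delta_n(x+e_1)-\Delta_n(x)$'' does not supply this mechanism, and you yourself flag that the $\Delta_n$ route stalls.

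Third, what you sketch applies only to sequences $v_n$ in a fixed direction with a lower linear growth bound $\abs{v_n}_1\ge\eta_0 n$ (both are needed for Lemma~\ref{lm:exit:con} and the underlying LLNs). The theorem allows $v_n$ to depend on $\what$ with direction wandering throughout $[\ximin,\ximax]$. The paper handles this by a second step: apply Step~1 to the deterministic sequences $\lfloor a_n\eta_\ell\rfloor$ and $\lfloor a_n\zeta_\ell\rfloor$ with $\eta_\ell\to\ximin$ from the left and $\zeta_\ell\to\ximax$ from the right, sandwich the general $v_n$ between them via Lemma~\ref{lm:new:comp0} again, then let $\ell\to\infty$. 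This step is absent from your proposal.

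The opening $\Delta_n$ paragraph is a dead end (as you note) and can simply be dropped.
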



Let us emphasize that the sequence $v_n$ is allowed to  depend on $\what\in\Ombig_0$.  
The interesting cases are of course the ones where we have a limit.   The next corollary follows immediately because if 
$\xi, \ximin, \ximax\in\Diff$  then by Theorem \ref{th:cocycles}\eqref{th:cocycles:flat}
$B_\pm^{\ximin}=B^{\xi}=B_\pm^{\ximax}$ for all $\xi\in\, [\ximin\,, \ximax]\,$.     

\begin{corollary}\label{cor:buse} 
 Assume  $\xi, \ximin, \ximax\in\Diff$ .
Then there exists an event  $\Ombig_0$ with $\Pbig(\Ombig_0)=1$ such that for each 
$\what\in\Omega_0$,   for any  sequence $v_n\in\Z_+^2$ that satisfies \eqref{eq:vn66}, 
and  for all $x,y\in\Z^2$, 
\be \label{eq:grad:coc} 
							B^\xi(\what,x,y) = \lim_{n\to \infty} \big( \Gpp_{x, v_n}(\w) - \Gpp_{y, v_n}(\w)  \big). 
						\ee 
				The limit implies that $\P$-a.s.\	$B^\xi(\what,x,y)$ is  a function of $(\w, x, y)$.  
						
						In particular, if $\gpp$ is differentiable everywhere on $\ri\Uset$,  then  for each direction $\xi\in\ri\Uset$ there is an event of full $\Pbig$-probability on which   limit \eqref{eq:grad:coc}  holds for any sequence $v_n/\abs{v_n}_1\to\xi$ with $\abs{v_n}_1\to\infty$.  
 \end{corollary}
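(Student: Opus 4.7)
The plan is to derive the corollary directly from Theorem \ref{th:construction}, observing that the hypothesis $\xi,\ximin,\ximax\in\Diff$ collapses its sandwich bounds into a single value.

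First I would invoke Theorem \ref{th:cocycles}\eqref{th:cocycles:flat}. Differentiability of $\gpp$ at $\ximin$ gives $\nabla\gpp(\ximin-)=\nabla\gpp(\ximin+)$, so by \eqref{eq:h=grad} the two tilt vectors agree: $h_+(\ximin)=h_-(\ximin)=-\nabla\gpp(\ximin)$. The same equality holds at $\ximax$, and since $\nabla\gpp$ is constant along $[\ximin,\ximax]$ (which is either a single exposed point or a maximal linear segment of $\gpp$), all of the tilt vectors $h_\pm(\zeta)$ for $\zeta\in\{\ximin,\xi,\ximax\}$ coincide with $-\nabla\gpp(\xi)$. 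Theorem \ref{th:cocycles}\eqref{th:cocycles:flat} then identifies the cocycles pointwise on $\Ombig$, giving
\[
B^{\ximin}_-(\what,u,u+e_i)=B^{\xi}(\what,u,u+e_i)=B^{\ximax}_+(\what,u,u+e_i).
\]

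Next I would apply Theorem \ref{th:construction} with $[\zeta,\eta]=[\ximin,\ximax]$; the hypothesis \eqref{eq:vn66} of the corollary matches the one invoked there. On the event $\Ombig_0$ of full $\Pbig$-probability supplied by that theorem, bounds \eqref{bu:G1}--\eqref{bu:G2} read
\[
B^{\xi}(\what,x,x+e_i)\le\varliminf_n\bigl(\Gpp_{x,v_n}-\Gpp_{x+e_i,v_n}\bigr)\le\varlimsup_n\bigl(\cdots\bigr)\le B^{\xi}(\what,x,x+e_i)
\]
for $i\in\{1,2\}$, pinning the nearest-neighbor limit to $B^\xi(\what,x,x+e_i)$. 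To extend to arbitrary $y\in\Z^2$ I would telescope $\Gpp_{x,v_n}-\Gpp_{y,v_n}$ along a nearest-neighbor lattice path passing through $x\wedge y$ that connects $x$ to $y$, and use cocycle additivity (Definition \ref{def:cK}\eqref{def:cK:coc}) to identify the limit as $B^\xi(\what,x,y)$; since only countably many pairs $(x,y)$ are involved, a single null set suffices. The $\kS$-measurability claim is immediate because the prelimits $\Gpp_{x,v_n}(\w)-\Gpp_{y,v_n}(\w)$ depend only on $\w$, so on $\Ombig_0$ the limit $B^\xi$ agrees almost surely with an $\kS$-measurable random variable. The ``in particular'' clause follows because everywhere-differentiability of $\gpp$ on $\ri\Uset$ places both $\ximin$ and $\ximax$ in $\Diff$ for every $\xi\in\ri\Uset$.

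There is no real obstacle at the level of this corollary. The entire analytic content, the comparison of $\Gpp$ with the stationary LPP $\Gne^B$ from Section \ref{sec:stat-lpp} and the asymptotic matching via the cocycle ergodic theorem of Appendix \ref{app:aux}, is already carried out inside Theorem \ref{th:construction}. Once one observes that differentiability collapses the two extreme cocycles $B^{\ximin}_-$ and $B^{\ximax}_+$ onto the single object $B^\xi$, the corollary is just a rewriting of that theorem's sandwich bounds.
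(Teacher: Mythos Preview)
Your proposal is correct and follows the paper's own approach: the paper simply states that the corollary ``follows immediately because if $\xi,\ximin,\ximax\in\Diff$ then by Theorem \ref{th:cocycles}\eqref{th:cocycles:flat} $B_\pm^{\ximin}=B^{\xi}=B_\pm^{\ximax}$,'' which collapses the sandwich bounds of Theorem \ref{th:construction} exactly as you describe. Your added detail on telescoping to general $(x,y)$ and on $\kS$-measurability is correct elaboration of what the paper leaves implicit.
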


Before the  proof of Theorem \ref{th:construction}, we  complete the proofs of some earlier theorems.

\begin{proofof}{of Theorem \ref{thm:buse}}   Limit \eqref{eq:grad:coc1}    is in Corollary \ref{cor:buse}.    Equation \eqref{EB=Dg} follows from \eqref{eq:h=grad}.

To prove the point-to-line limit \eqref{p2lB-1}    recall the duality from Theorem \ref{th:tilt-velocity}.   Observe that
\be\label{p2lB-5} 
\begin{aligned}
\zeta\in[\ximin, \ximax] &\;\Longleftrightarrow\;  \gpp(\zeta)=\gpp(\xi)+\nabla\gpp(\xi)\cdot(\zeta-\xi)  \\
&\;\Longleftrightarrow\;   \gpp(\zeta)=\gpp(\xi)-h\cdot(\zeta-\xi)   
\;\Longleftrightarrow\;  \zeta\in[\,\ximin(h), \ximax(h)\,] .
\end{aligned}\ee
 
Let $z\in\{e_1,e_2\}$.   Pick ($\w$-dependent)  $u_n, v_n\in\Z^2_+$  so  that  $\abs{u_n}_1=\abs{v_n}_1=n$ and 
\[  \Gpl_{n}(h)=\Gpp_{0,u_n}+h\cdot u_n 
\quad\text{and}\quad 
\Gpl_{n-1}(h)\circ T_z=\Gpp_{z,v_n}+h\cdot(v_n-z).  \]
Fix $\w$ so that limits \eqref{eq:g:p2l} and \eqref{lln5} hold both for $\w$ and $T_z\w$.   Then taking $n\to\infty$ along suitable subsequences shows that all  limit points of $u_n(\w)/n$ and $v_n(\w)/n$ satisfy duality \eqref{eq:duality} and so lie in $[\,\ximin(h), \ximax(h)\,]$.  By \eqref{p2lB-5}   these sequences satisfy  \eqref{eq:vn66}. 
Let $n\to\infty$  in the inequalities 
\[\Gpp_{0,v_n}-\Gpp_{z,v_n}+h\cdot z \le \Gpl_{n}(h)-\Gpl_{n-1}(h)\circ T_z \le \Gpp_{0,u_n}-\Gpp_{z,u_n}+h\cdot z,\]
and use \eqref{eq:grad:coc} to get the conclusion.  
\qed
\end{proofof}

\begin{proofof}{of Theorem \ref{thm:var-buse}}
The theorem follows from  
Theorems \ref{th:var-sol} and  \ref{thm:var-p2p}
because the Busemann function $B^\xi$ is the  cocycle $B^\xi$ from Theorem \ref{th:cocycles}. 
\qed\end{proofof}



\begin{proofof}{of Theorem \ref{thm:kS}}
Under the assumption of differentiability at endpoints of linear segments, every $\xi\in\Diff$ satisfies Corollary \ref{cor:buse} and so $B^\xi(\cdot, x,y)$ is $\kS$-measurable.  Any other point $\zeta\in\ri\Uset$ is a limit from both left and right of $\Diff$-points,  and  so by parts \eqref{th:cocycles:cont} and \eqref{th:cocycles:cont-left} of Theorem \ref{th:cocycles},  cocycles $B^\zeta_{\pm}$  are a.s.\ limits of $\kS$-measurable cocycles.   
\qed\end{proofof}

 The remainder of this section proves Theorem \ref{th:construction}.   
 We begin with   a general comparison lemma.   This idea  goes back at least to \cite{Alm-98, Alm-Wie-99}.  
With  arbitrary real weights $\{\wt Y_x\}_{x\in\Z^2}$ define  last-passage times   
 	\[\wt \Gpp_{u,v}=\max_{x_{0,n}}\sum_{k=0}^{n-1}\wt Y_{x_k}.\]
The maximum is over up-right paths from $x_0=u$ to $x_n=v$ with $n=|v-u|_1$.  The convention is $\wt \Gpp_{v,v}=0$.  
For  $x\le v-e_1$ and $y\le v-e_2$  denote the increments  by 
	\[	\wt I_{x,v} = \wt \Gpp_{x,v} - \wt \Gpp_{x+e_1,v} \qquad  \text{ and } \qquad  \wt J_{y,v} = \wt \Gpp_{y,v} - \wt \Gpp_{y+e_2,v}\,. \]
	
\begin{lemma}\label{lm:new:comp0}
	For  $x\le v-e_1$ and $y\le v-e_2$ 
		\be
		\label{eq:new:comp0}
			\wt I_{x,v+e_2} \ge \wt I_{x,v} \ge \wt I_{x,v+e_1} 
			\qquad  \text{ and } \qquad 
			\wt J_{y, v+e_2} \le \wt J_{y,v} \le \wt J_{y, v+e_1}\,.
		\ee
\end{lemma}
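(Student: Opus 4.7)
The plan is to prove all four inequalities in \eqref{eq:new:comp0} by a common path-swap (``uncrossing'') argument, spelled out in detail for the prototype $\wt I_{x,v+e_2} \ge \wt I_{x,v}$ and then adapted to the other three cases. The prototype rearranges to
\[
\wt G_{x, v+e_2} + \wt G_{x+e_1, v} \;\ge\; \wt G_{x, v} + \wt G_{x+e_1, v+e_2}.
\]
I would pick maximizing up-right paths $\pi_1 = (p_0, \dots, p_n)$ for $\wt G_{x, v}$ and $\pi_2 = (q_0, \dots, q_n)$ for $\wt G_{x+e_1, v+e_2}$; both have length $n = |v-x|_1$. The aim is to produce admissible paths $\tau_1$ from $x$ to $v+e_2$ and $\tau_2$ from $x+e_1$ to $v$ with $w(\tau_1) + w(\tau_2) = w(\pi_1) + w(\pi_2)$ (where $w$ denotes path weight in the sense of \eqref{Gxy1}); suboptimality of $\tau_1, \tau_2$ then gives the inequality.

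The key combinatorial input is that $\pi_1$ and $\pi_2$ must share a lattice point. I would establish this via a parity argument: set $d_k = (q_k - p_k) \cdot (e_1 - e_2)$, note $d_0 = +1$ and $d_n = -1$, and check by a four-case enumeration of the step directions that $d_{k+1} - d_k \in \{-2, 0, +2\}$. So $d_k$ is always odd and the transition from $+1$ to $-1$ must happen at a single step $k \to k+1$, forcing $\pi_1$ to take an $e_1$-step and $\pi_2$ to take an $e_2$-step there. Combined with the identity $(q_k - p_k) \cdot (e_1 + e_2) = 1$ (both paths have made $k$ steps from starts differing by $e_1$), this pins down $q_k - p_k = e_1$, so $z := q_k = p_{k+1}$ is a shared vertex. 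Splicing at $z$ then produces $\tau_1 = (p_0, \dots, p_{k+1}, q_{k+1}, \dots, q_n)$ and $\tau_2 = (q_0, \dots, q_k, p_{k+2}, \dots, p_n)$, both admissible with the correct endpoints.

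For the remaining three inequalities I would run analogous swaps. The two $\wt J$ inequalities are the $e_1 \leftrightarrow e_2$ mirror images of the two $\wt I$ inequalities and require no separate work. For $\wt I_{x,v} \ge \wt I_{x,v+e_1}$, rearranged to $\wt G_{x,v} + \wt G_{x+e_1,v+e_1} \ge \wt G_{x+e_1,v} + \wt G_{x,v+e_1}$, I would take maximizers $\pi_1$ from $x+e_1$ to $v$ (length $n-1$) and $\pi_2$ from $x$ to $v+e_1$ (length $n+1$). If $\pi_2$ starts or ends with an $e_1$-step then $\pi_1$ and $\pi_2$ share $x+e_1$ or $v$ respectively, and the tail-swap is direct. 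Otherwise $\pi_2$ begins and ends with $e_2$-steps; stripping these gives a length-$(n-1)$ path $\pi_2'$ from $x+e_2$ to $v+e_1 - e_2$, and the same $d_k$ parity argument applied to $\pi_1$ and $\pi_2'$ now yields $d_0 = -2$, $d_{n-1} = +2$ with integer even values jumping by $0$ or $\pm 2$, forcing $d$ to hit $0$ and producing the shared vertex.

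Main obstacle: I expect the most delicate step to be the vertex-weight bookkeeping at $z$. Because \eqref{Gxy1} sums $\wt Y$ over non-endpoint vertices only, I must verify that $z$ contributes exactly once to each of $w(\tau_1)$ and $w(\tau_2)$ regardless of where along the paths the crossing occurs, including the boundary cases $k=0$ (where $z = x+e_1$ is also the start of $\pi_1$) and $k = n-1$ (where $z = v$ is the endpoint of $\pi_1$, hence appears in $w(\tau_1)$ but not in $w(\pi_1)$, and the complementary mismatch arises in $\tau_2$ versus $\pi_2$). With the splicings given above the mismatches cancel exactly and $w(\tau_1) + w(\tau_2) = w(\pi_1) + w(\pi_2)$, but any alternative concatenation would introduce a spurious $\pm \wt Y_z$, which would be fatal since $\wt Y$ is permitted to be signed. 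Once this accounting is pinned down for the prototype, the remaining inequalities are routine notational variants.
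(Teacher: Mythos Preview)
Your path-swap argument is correct and complete; the crossing lemma via the parity sequence $d_k$ and the weight bookkeeping at the shared vertex $z$ both go through as you describe, including in the boundary cases $k=0$ and $k=n-1$. (One small slip in phrasing: \eqref{Gxy1} excludes only the terminal vertex, not both endpoints, but your actual accounting reflects this correctly.)

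The paper takes a different, shorter route: instead of working with maximizing paths, it uses the dynamic-programming recursion $\wt G_{u,v}=\wt Y_u+\wt G_{u+e_1,v}\vee\wt G_{u+e_2,v}$ to write
\[
\wt I_{u,v}=\wt Y_u+(\wt I_{u+e_2,v}-\wt J_{u+e_1,v})^+,
\]
and then argues by backward induction on $u$. The base cases are on the north and east boundaries of the rectangle, where the increments are trivially compared; the inductive step is a one-line monotonicity observation for the map $(a,b)\mapsto(a-b)^+$. This avoids all path combinatorics and vertex bookkeeping. Your uncrossing approach is more geometric and self-contained (it never invokes the recursion), and it generalizes readily to other planar models where a recursion may be less transparent; the paper's approach is more algebraic and economical for this specific lattice setting.
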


\begin{proof}
	Let $v=(m,n)$. 
	The proof goes   by an induction argument. 
	Suppose $x = (k,n)$ for some $k < m$. Then on the north boundary
		\begin{align*}
			\wt I_{(k,n), (m, n+1)} 
			&= \wt \Gpp_{(k,n), (m, n+1)} - \wt \Gpp_{(k +1,n), (m, n+1)} \\
			&= \wt Y_{k,n} + \wt \Gpp_{(k+1,n), (m, n+1)} \vee \wt \Gpp_{(k,n+1), (m, n+1)} - \wt \Gpp_{(k +1,n), (m, n+1)}\\
			&\ge \wt Y_{k,n}= \wt \Gpp_{(k,n), (m, n)} - \wt \Gpp_{(k+1,n), (m, n)} = \wt I_{(k,n), (m,n)}\,.
		\end{align*}
	On the east boundary, when $y= (m,\ell)$ for some $\ell < n$
		\begin{align*}
			\wt J_{(m,\ell),(m,n+1)}&=\wt \Gpp_{(m,\ell),(m,n+1)} - \wt \Gpp_{(m,\ell+1),(m,n+1)}\\
			&=\wt Y_{m,\ell}  = \wt \Gpp_{(m,\ell),(m,n)}-\wt \Gpp_{(m,\ell+1),(m,n)}  = \wt J_{(m,\ell),(m,n)}\,. 
		\end{align*}
	These inequalities start the induction. Now let   $u\le v- e_1- e_2$.
	Assume by induction  that \eqref{eq:new:comp0} holds for $x=u+e_2$ and $y=u+e_1$.  
		\begin{align*}
			\wt I_{u,v+e_2}
			&=\wt \Gpp_{u,v+e_2}-\wt \Gpp_{u+e_1,v+e_2}
			= \wt Y_u + (\wt \Gpp_{u+e_2,v+e_2}-\wt \Gpp_{u+e_1,v+e_2})^+ \\
			&=\wt Y_u+(\wt I_{u+e_2,v+e_2} - \wt J_{u+e_1,v+e_2})^+  \\
			&\ge  \wt Y_u+(\wt I_{u+e_2,v} - \wt J_{u+e_1,v})^+ = \wt I_{u,v}\,.  
		\end{align*} 
		The last equality comes by repeating  the first three equalities with $v$ instead of $v+e_2$. 
	A similar argument works for $\wt I_{u,v}\ge \wt I_{u,v+e_1}$ and a symmetric argument works for the $\wt J$ inequalities. 	
\qed	
\end{proof}

The estimates needed for the proof of Theorem \ref{th:construction} come from coupling $\Gpp_{u,v}$ with the stationary LPP described in Section \ref{sec:stat-lpp}.  For the next two lemmas fix $\zeta\in\ri\Uset$ and a  
  cocycle $B(\what, x,y)=B^\zeta_\pm(\what, x,y)$ from Theorem \ref{th:cocycles}.  Let $r=\zeta\cdot e_1/\zeta\cdot e_2$ so that $\alpha=\gppa'(r\pm)$ satisfies
\be\label{bu:a}  \alpha=\Ebig[B(x,x+e_1)]  \quad\text{and}\quad  \f(\alpha)=\Ebig[B(x,x+e_2)]. \ee
 As in \eqref{eq:Gnecorner} define 
 	\begin{align}
	\label{Gnecorner5}
		\begin{split}
		\Gne_{u,v}&=B(u,v)\qquad\text{for $u\in\{v-ke_i: k\in\Z_+, \, i\in\{1,2\}\}$}\\[3pt]
	\qquad 	\text{and} \qquad 		\Gne_{u,v}
			&=\w_u+\Gne_{u+e_1,v}\vee \Gne_{u+e_2,v}\qquad \text{for } \  u\le v-e_1-e_2\,.
		\end{split}
	\end{align}

Let  $\Gne_{u,v}(A)$ denote a maximum over paths restricted to the set $A$.  In particular, below we use 
  \[  \Gne_{0,\,v}(v-e_i\in x_\centerdot) = \max_{x_\centerdot\,: \, x_{\abs{v}_1-1}=v-e_i} \sum_{k=0}^{\abs{v}_1-1}  \wt Y_{x_k}
  \]
  where the maximum is restricted to  paths that go through the point $v-e_i$, and the 
 weights are from \eqref{Gnecorner5}:  $\wt Y_x=\w_x$ for $x\le v-e_1-e_2$ while 
 $\wt Y_{v-ke_i}=B(v-ke_i, v-(k-1)e_i)$.


Figure \ref{fig:7.5}  makes the limits of the next lemma obvious.   But  a.s.\ convergence requires some technicalities  because the north-east  boundaries themselves are translated as the limit is taken.  

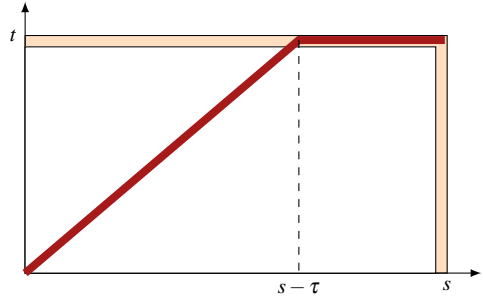
\begin{figure}[h]
	\begin{center}
		\begin{tikzpicture}[>=latex, scale=0.6]

\draw[<->] (0,6)--(0,0)--(10,0); 

			\fill[color=orange, nearly transparent](0,0)rectangle(9.25,5.25); 
			\draw(0,0)rectangle(9.25,5.25); 
			\fill[color=white](0,0)rectangle(9,5);
			\draw(0,0)rectangle(9,5);
			
			\draw(9.25,0)node[below]{$s$};
			\draw(0,5.25)node[left]{$t$};
			\draw[color=nicosred, line width=3pt](0,0)--(6,5.15)--(9.2, 5.15); 
			
			\draw[dashed] (6,5.15)--(6,0)node[below]{$s-\tau$};

%
%
%
%
%
%
%
%
%
%
%
%
%

		\end{tikzpicture}
	\end{center}
	\caption{\small Illustration of \eqref{eq:hor:lln}.  Forcing the last step to be $e_1$ restricts the maximization   to paths that hit the north boundary instead of the east boundary.  The path from $0$ to $(s-\tau,t)$ contributes $\gpp(s-\tau,t)$ and the remaining segment of length $\tau$ on the north boundary  contributes $\alpha \tau$. 
			}
	\label{fig:7.5}
\end{figure}

 \begin{lemma}
	Assume \eqref{2d-ass}. Fix reals $0<s,t<\infty$. Let $v_n\in\Z_+^2$ be such that    $v_n/\abs{v_n}_1\to(s,t)/(s+t)$ as $n\to\infty$ and  $\abs{v_n}_1\ge\eta_0n$ for some constant $\eta_0>0$. Then we have the following almost sure  limits:
		\be\label{eq:hor:lln}
			\abs{v_n}_1^{-1}\,\Gne_{0,\,v_n}(v_n-e_1\in x_\centerdot) \; \underset{n\to\infty}{ \longrightarrow }\;
			(s+t)^{-1}\sup_{0 \le \tau \le s}\{ \alpha\tau + \gpp(s-\tau,t) \}  
		\ee

and 
 		\be\label{eq:vert:lln}
			\abs{v_n}_1^{-1}\,\Gne_{0,\,v_n}(v_n-e_2\in x_\centerdot) \; \underset{n\to\infty}{\longrightarrow} \;
			(s+t)^{-1}\sup_{0 \le \tau \le t}\{ f(\alpha)\tau + \gpp(s, t-\tau) \}. 
		\ee
 \end{lemma}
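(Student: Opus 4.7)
\emph{Proof plan.} I focus on \eqref{eq:hor:lln}; the limit \eqref{eq:vert:lln} is obtained by the symmetric argument after swapping $e_1 \leftrightarrow e_2$ and using that $\Ebig[B(x,x+e_2)] = f(\alpha)$ via \eqref{bu:a}. Restricting to paths whose terminal step is $e_1$ means the path enters the north boundary row $\{y\cdot e_2 = v_n\cdot e_2\}$ for the first time at some column $c\in\{0,1,\dots,v_n\cdot e_1-1\}$ via an $e_2$ step from $(c, v_n\cdot e_2-1)$, and then travels horizontally along the boundary to $v_n$. Unfolding the recursion \eqref{Gnecorner5} and using additivity of $B$, I get the decomposition
\begin{equation*}
\Gne_{0,v_n}(v_n-e_1\in x_\centerdot)=\max_{0\le c\le v_n\cdot e_1-1}\bigl[\Gpp_{0,(c,v_n\cdot e_2-1)}+\w_{(c,v_n\cdot e_2-1)}+B((c,v_n\cdot e_2),v_n)\bigr].
\end{equation*}
Setting $\tau=v_n\cdot e_1-c\in\{1,\dots,v_n\cdot e_1\}$ (the length of the horizontal boundary run) and $\rho=\tau/|v_n|_1$ puts this in a form ready for the limit.

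\emph{Uniform asymptotics.} Three ingredients, all uniform in $\tau$, combine to give the asymptotic. First, the uniform shape theorem \eqref{lln5} yields $|v_n|_1^{-1}\Gpp_{0,v_n-\tau e_1-e_2}=|v_n|_1^{-1}\gpp(v_n-\tau e_1)+o(1)$, the lattice offset $-e_2$ being absorbed by continuity and $1$-homogeneity of $\gpp$. Second, the single extra weight $\w_{(c,v_n\cdot e_2-1)}$ is $o(|v_n|_1)$ uniformly by a standard Borel--Cantelli argument using the $\pp>2$ moment assumption. Third, and most delicate, I intend to establish the uniform cocycle law of large numbers
\begin{equation*}
\max_{0\le\tau\le v_n\cdot e_1}\bigl||v_n|_1^{-1}B(v_n-\tau e_1,v_n)-\alpha\tau/|v_n|_1\bigr|\longrightarrow 0\quad\text{a.s.}
\end{equation*}
from Theorem \ref{th:cocycles}\eqref{th:cocycles:indep}, which gives ergodicity of the $T_{e_1}$-stationary sequence of horizontal cocycle increments with mean $\alpha$ by \eqref{bu:a}, combined with the cocycle ergodic theorem in Appendix \ref{app:aux}.

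\emph{Passage to the continuum supremum.} Feeding these estimates into the outer maximum and using $v_n/|v_n|_1\to(s,t)/(s+t)$ together with homogeneity and continuity of $\gpp$, I obtain
\begin{equation*}
|v_n|_1^{-1}\Gne_{0,v_n}(v_n-e_1\in x_\centerdot)=\max_{\rho\in R_n}\bigl[(s+t)^{-1}\gpp(s-(s+t)\rho,t)+\alpha\rho\bigr]+o(1),
\end{equation*}
with $R_n\subset[0,s/(s+t)]$ the $\tau$-discretization. The function $\rho\mapsto\gpp(s-(s+t)\rho,t)+(s+t)\alpha\rho$ is continuous and concave on $[0,s/(s+t)]$, so the discrete maximum converges to the continuous supremum, and the substitution $u=(s+t)\rho$ then produces the right side of \eqref{eq:hor:lln}.

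\emph{Main obstacle.} The hard point is the uniform cocycle LLN above: because the base point $v_n$ varies with $n$, Birkhoff's theorem does not apply directly to the sequence $\{B(v_n-\tau e_1,v_n)\}_\tau$. Stationarity under $T_{e_1}$ reduces the law of the increments to a fixed base point, but uniformity in $\tau$ requires the cocycle ergodic theorem of Appendix \ref{app:aux} rather than plain Birkhoff. A fallback, if needed, is to sandwich $B(v_n-\tau e_1,v_n)$ between bulk $\Gpp$-differences using the recovery property \eqref{Bw-9} and the monotonicity of Lemma \ref{lm:new:comp0}, then appeal to the shape theorem \eqref{lln5} alone, at the cost of more bookkeeping.
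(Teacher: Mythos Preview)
Your proposal is correct and follows essentially the same route as the paper: the same decomposition according to where the path enters the north boundary, the same use of the uniform shape theorem \eqref{lln5} for the bulk passage time, and the same reliance on the cocycle ergodic theorem (Appendix \ref{app:aux}, invoked in the paper as \eqref{b:F-erg}) to control $B(v_n-\tau e_1,v_n)-\alpha\tau$ uniformly in $\tau$ via the centered cocycle $F(0,v_n-\tau e_1)-F(0,v_n)$. The only cosmetic difference is that the paper inserts an $\varepsilon$-grid $\{q^n_j\}$ and handles the resulting slack with a Doob--Burkholder/Borel--Cantelli estimate, whereas you apply the uniform shape theorem directly without discretizing; both arguments yield the same upper bound, and the lower bound is identical.
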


\begin{proof}
	We prove \eqref{eq:hor:lln}. Fix $\e>0$, 
	  let  $M=\fl{\e^{-1}}$,  and 
		\[
			q^n_j = j\Bigl\lfloor\frac{\e\abs{v_n}_1 s}{s+t}\Bigr\rfloor \, \text{ for }0\le j \le M-1, 
				\text{ and } q^n_{M}= v_n\cdot e_1.
		\]
		For large enough $n$ it is the case that  $q^n_{M-1}<v_n\cdot e_1$.  

Suppose    a maximal path for $\Gne_{0,\,v_n}(v_n-e_1\in x_\centerdot)$ enters 
	 the north boundary from  the bulk at the point 
  $v_n-(\ell,0)$  with $q^n_{j} < \ell\le q^n_{j+1}$.  By superadditivity 
		\begin{align*}
			&\Gne_{0,\,v_n}(v_n-e_1\in x_\centerdot) 
= \Gpp_{0, \,v_n-(\ell,1)} + \w_{v_n-(\ell,1)} + 
B( v_n-(\ell,0), v_n)      
\\
			&\quad 
	\le \Gpp_{0, \,v_n-(q^n_j,1)}   +  q^n_j\alpha - \sum_{k=q^n_j+1}^{\ell-1} \bigl(\w_{v_n-(k,1)} -\Ew\bigr)  +(\ell-1-q^n_j)  \Ew \\
&\qquad\qquad  + \; \;  
\bigl( B( v_n-(\ell,0), v_n) -\ell\alpha\bigr)      
+ (\ell-q^n_j)\alpha . 	
		\end{align*}
 The two main  terms come  right after the inequality above  and the rest are errors.  	
		
Define the centered cocycle 
$   F(x,y)= h(B)\cdot(x-y)-B(x,y) $   so that 
\[   
B( v_n-(\ell,0), v_n) -\ell\alpha= F(0, v_n-(\ell,0))  -F(0, v_n).  
\] 
The potential-recovery property \eqref{eq:VB}   $\w_0=B(0,e_1)\wedge B(0,e_2)$ gives 
\[  F(0, e_i) \le \alpha\vee\f(\alpha) -\w_0\qquad\text{for $i\in\{1,2\}$.}   \]  		
The i.i.d.\ distribution of $\{\w_x\}$ and  $\E(\abs{\w_0}^{\pp})<\infty$ with $\pp>2$  are  strong enough to guarantee that Lemma  \ref{th:Atilla}   from Appendix \ref{app:aux} applies and gives 
\be\label{b:F-erg}      \lim_{N\to\infty} \; \frac1N \; \max_{x\ge 0\,:\,\abs{x}_1\le N} \abs{F(\what,0,x)} =0\qquad
\text{for a.e.\ $\what$.}    	
\ee	
		
 Collect the bounds for all the intervals $(q^n_j, q^n_{j+1}]$ and let $C$ denote a constant.  Abbreviate $S^n_{j,m}=\sum_{k=q^n_j+1}^{q^n_j+m} \bigl(\w_{v_n-(k,1)} -\Ew\bigr)$. 
	\be	\begin{aligned}
			&\Gne_{0,\,v_n}(v_n-e_1\in x_\centerdot)  
				\;\le\;\max_{0\le j \le M-1 }\!\!\Big\{  \Gpp_{0, \,v_n-(q^n_j,1)}   +  q^n_j\alpha + C(q^n_{j+1}-q^n_j)  \\
	&\quad +   \max_{0\le m< q^n_{j+1}-q^n_j} \abs{S^n_{j,\,m}} 
+  \max_{q^n_{j} < \ell\le q^n_{j+1}} F(0, v_n-(\ell,0))  -F(0, v_n) 
\Big\}.
		\end{aligned}\label{G:bnd} \ee 
Divide through by $\abs{v_n}_1$ and let $n\to\infty$.   Limit   \eqref{lln5}  gives convergence of the $G$-term on the right.  
  We claim that the terms on the second line of \eqref{G:bnd} vanish.  Limit  \eqref{b:F-erg}    takes care of the $F$-terms. 		
 Combine Doob's maximal inequality for martingales with 	Burkholder's inequality 
 \cite[Thm.~3.2]{Bur-73}	to obtain, for $\delta>0$,  
\begin{align*}
&\P\Bigl\{  \,\max_{0\le m< q^n_{j+1}-q^n_j} \abs{S^n_{j,m}} \ge \delta\abs{v_n}_1 \Bigr\} \le \frac{ \E\bigl[ \abs{S^n_{j, \,q^n_{j+1}-q^n_j}}^{\pp}\bigr] }{\delta^{\pp}\abs{v_n}_1^{\pp} } \\
&\qquad\qquad\qquad
 \le  \frac{C}{\delta^{\pp}\abs{v_n}_1^{\pp} } \, { \E\biggl[ \,\biggl\lvert \,\sum_{i=1}^{q^n_{j+1}-q^n_j} \bigl(\w_{i,0} -\Ew\bigr)^2\biggr\rvert^{\pp/2} \,  \biggr] }  
\le \frac{C}{\abs{v_n}_1^{\pp/2} } . 
\end{align*} 	
Thus Borel-Cantelli takes 	care of the $S^n_{j,m}$-term on the second line of \eqref{G:bnd}.  (This is the place where the assumption $\abs{v_n}_1\ge \eta_0n$ is used.)   We have the upper bound
\begin{align*}
\varlimsup_{n\to\infty}   \abs{v_n}_1^{-1} \Gne_{0,\,v_n}(v_n-e_1\in x_\centerdot)  
    \le 
 (s+t)^{-1} \max_{0\le j \le M-1 }\big[\gpp(s-sj\e, t) +sj\e \alpha + C\e s \big].
 \end{align*} 
 Let $\e\searrow 0$ to complete the proof of the upper bound.

  To get the  matching lower bound let  the supremum $\sup_{\tau\in [0,s]} \{ \tau\alpha +  \gpp(s-\tau , t)\}$ be attained at $\tau^* \in [0,s]$.  
	With $m_n=\abs{v_n}_1/(s+t)$ we have 
 		\begin{align*}
	\Gne_{0,\,v_n}(v_n-e_1\in x_\centerdot) &\ge 	\Gpp_{0,v_n-(\fl{m_n\tau^*}\vee1,1)}\; +\; \w_{v_n-(\fl{m_n\tau^*}\vee1,1)} \\[2pt]  & \qquad  
	+  \; B(v_n- (\fl{m_n\tau^*}\vee1,0), \, v_n). 
		\end{align*}
Use again the cocycle $F$ from above,  and let $n\to\infty$ to get 
		\begin{align*}
\varliminf_{n\to\infty}  		\abs{v_n}_1^{-1}\Gne_{0,\,v_n}(v_n-e_1\in x_\centerdot)
		\ge  (s+t)^{-1}[\gpp(s-\tau^*,t) + \tau^* \alpha] . 
		\end{align*}
This completes  the proof of  \eqref{eq:hor:lln}.
\qed\end{proof}

 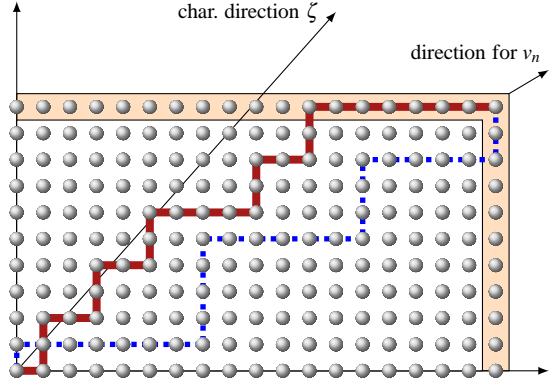
\begin{figure}[h]
	\begin{center}
		\begin{tikzpicture}[ >=latex, scale=0.7]

			\draw[<->] (0,7)--(0,0)--(10,0); 

			\fill[color=orange, nearly transparent](0,0)rectangle(9.25,5.25); 
			\draw(0,0)rectangle(9.25,5.25); 
			\fill[color=white](0,0)rectangle(8.75,4.75);
			\draw(0,0)rectangle(8.75,4.75);

			\draw[->] (0,0)--(6,6.8) node[left]{\small char.~direction $\zeta$\phantom{a}};

			\draw[line width=3pt, color=nicosred](9,5)--(5.5,5)--(5.5,4)--(4.5,4)--(4.5,3)--(2.5, 3)--(2.5, 2)--(1.5,2)--(1.5,1)--(.5,1)--(.5,0)--(0,0); 
			
			\draw[line width=2pt, dotted, color=blue](9,5)--(9,4)--(6.5,4)--(6.5,2.5)--(3.5, 2.5)--(3.5,1.5)--(3.5,0.5)--(.5,.5)--(0,.5)--(0,0); 
			
			 \foreach \x in {0,...,18}{
              			 \foreach \y in {0,...,10}{
					\shade[ball color=really-light-gray](\x*0.5,\y*0.5)circle(1.3mm);  
								}
							} 
			\draw[->] (9.25, 5.25)--(10,5.7)node[above left]{\small direction for $v_n$}; 
			


			
			

					\end{tikzpicture}
	\end{center}
	\caption{\small  Illustration of Lemma \ref{lm:exit:con}.  With $\alpha$-boundaries geodesics tend to go in the $\alpha$-characteristic direction $\zeta$.  If   $v_n$ converges in a direction  below $\zeta$,  maximal paths to $v_n$ tend  to hit  the north boundary. The dotted path that hits  the east boundary is unlikely to  be maximal
	for large $n$. }
	\label{fig:7.6}
\end{figure}

Continue with the stationary LPP defined by \eqref{Gnecorner5} in terms of a cocycle $B=B^\zeta_{\pm}$, with $r=\zeta\cdot e_1/\zeta\cdot e_2$ and $\alpha$ as in \eqref{bu:a}.    Let us call the direction $\zeta$ {\sl characteristic} for $\alpha$.  
  The next lemma shows 	that in stationary LPP a maximizing path  to a point   below the characteristic direction    will eventually hit    the north boundary before the east boundary.  (Illustration in Figure \ref{fig:7.6}.)    We omit  the entirely analogous result  and proof for  a point above the characteristic line.      

\begin{lemma} \label{lm:exit:con}    Let $s\in(r, \infty)$.  Let   $v_n\in\Z_+^2$  be such that   $v_n/\abs{v_n}_1\to(s,1)/(1+s)$ and $\abs{v_n}_1\ge\eta_0n$ for some constant $\eta_0>0$.
Assume that $\gppa'(r+)>\gppa'(s-)$.  
	Then
 	$\Pbig$-a.s.\ there exists a random $n_0 < \infty$ such that for all $n \ge n_0$, 
		\be\label{eq:move:e1}
		\Gne_{0,\,v_n} = 
		\Gne_{0, v_n}(v_n-e_1\in x_\centerdot). 
		\ee
\end{lemma}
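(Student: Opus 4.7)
The plan is to deduce \eqref{eq:move:e1} from a strict inequality between the two almost-sure limits provided by \eqref{eq:hor:lln}--\eqref{eq:vert:lln} (applied with $t=1$). Write
\[
\Phi_1=\sup_{0\le\tau\le s}\{\alpha\tau+\gpp(s-\tau,1)\},
\qquad
\Phi_2=\sup_{0\le\tau\le 1}\{\f(\alpha)\tau+\gpp(s,1-\tau)\},
\]
so that $|v_n|_1^{-1}\,\Gne_{0,v_n}(v_n-e_i\in x_\centerdot)\to(1+s)^{-1}\Phi_i$ $\Pbig$-almost surely. Since every admissible path to $v_n$ takes its last step in direction $e_1$ or $e_2$, $\Gne_{0,v_n}$ is the maximum of the two constrained passage times. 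Once $\Phi_1>\Phi_2$ is established, one can pick on the almost-sure event of convergence a random $n_0(\what)$ large enough that $\Gne_{0,v_n}(v_n-e_1\in x_\centerdot)>\Gne_{0,v_n}(v_n-e_2\in x_\centerdot)$ for all $n\ge n_0$, which is precisely \eqref{eq:move:e1}.

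For the strict inequality I will use the supporting-hyperplane consequence of duality. By \eqref{eq:h=grad} and \eqref{g(h)=0} the vector $(\alpha,\f(\alpha))=-h_\pm(\zeta)=\nabla\gpp(\zeta\pm)$ satisfies
\[
\gpp(a,b)\le \alpha a+\f(\alpha)b \qquad \text{for all } (a,b)\in\R_+^2,
\]
with equality precisely on the closed cone generated by the maximal linear segment of $\gpp$ on which $(\alpha,\f(\alpha))$ is a subgradient. In the parametrization $r\mapsto(r,1)$ this segment corresponds to an interval $[r_0,r_1]\ni r$ on which $\gppa'\equiv\alpha$. Substituting the hyperplane bound into $\Phi_1$ gives the upper bound $\alpha s+\f(\alpha)$, and equality is attained either at $\tau=s-r_1\in[0,s]$ (when $s\ge r_1$) or directly at $\tau=0$ (when $s<r_1$, because then $s\in[r_0,r_1]$ and $\gamma(s)=\alpha s+\f(\alpha)$). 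In either case $\Phi_1=\alpha s+\f(\alpha)$.

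The hypothesis $\gppa'(r+)>\gppa'(s-)$ together with the monotonicity of $\gppa'$ forces $s>r_1$: otherwise $\gppa'$ would be constant equal to $\alpha$ on $[r,s]\subseteq[r,r_1]$, contradicting the strict inequality. Hence for every $\tau\in[0,1)$ the ratio $s/(1-\tau)\ge s>r_1$ lies strictly to the right of $[r_0,r_1]$, so the hyperplane bound is strict at $(s,1-\tau)$; and at $\tau=1$ strictness holds as well since $\gpp(s,0)=s\Ew<\alpha s$ by Lemma~\ref{gppa-lm}. Continuity of $\gpp$ and compactness of $[0,1]$ then promote this pointwise strict inequality to the uniform bound
\[
\Phi_2<\sup_{0\le\tau\le 1}\{\f(\alpha)\tau+\alpha s+\f(\alpha)(1-\tau)\}=\alpha s+\f(\alpha)=\Phi_1.
\]

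The main point to keep straight is the identification of the maximal linear segment $[r_0,r_1]$ of $\gppa$ carrying slope $\alpha$ and the verification that the hypothesis $\gppa'(r+)>\gppa'(s-)$ places $s$ strictly to the right of $r_1$. Once this is done, the rest of the proof is a direct comparison of the two almost-sure limits and a pointwise choice of $n_0(\what)$. I do not anticipate serious difficulty in the selection of $n_0$, since the relevant event has full $\Pbig$-measure as the intersection of the two full-measure events on which \eqref{eq:hor:lln} and \eqref{eq:vert:lln} hold.
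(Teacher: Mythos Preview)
Your argument is correct. Both you and the paper reduce the lemma to the strict inequality $\Phi_1>\Phi_2$ between the two limits in \eqref{eq:hor:lln}--\eqref{eq:vert:lln}, but you establish that inequality by a genuinely different mechanism.

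The paper argues by contradiction and uses two local derivative computations: first it checks that the right $\tau$-derivative of $\alpha\tau+\gppa(s-\tau)$ at $\tau=0$ is $\alpha-\gppa'(s-)>0$, producing a $\tau^*>0$ with $\alpha\tau^*+\gpp(s-\tau^*,1)>\gpp(s,1)$; second it shows the $\tau$-derivative of $\f(\alpha)\tau+\gpp(s,1-\tau)$ is nonpositive, so $\Phi_2=\gpp(s,1)$. Together these give $\Phi_1>\gpp(s,1)=\Phi_2$. Your route instead exploits the global supporting-hyperplane inequality $\gpp(a,b)\le\alpha a+\f(\alpha)b$ coming from \eqref{g(h)=0}: it yields $\Phi_1=\alpha s+\f(\alpha)$ exactly (equality attained at $\tau=s-r_1$), and, because the hypothesis $\gppa'(r+)>\gppa'(s-)$ forces $s>r_1$, the hyperplane inequality is strict at every $(s,1-\tau)$, giving $\Phi_2<\alpha s+\f(\alpha)$ by compactness. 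Your approach is a bit more conceptual and delivers the extra information $\Phi_1=\alpha s+\f(\alpha)$; the paper's derivative analysis is more hands-on and also identifies $\Phi_2=\gpp(s,1)$. One small point worth making explicit in your write-up: when $\alpha=\gppa'(r-)$ and $r$ is a corner of $\gppa$, the statement ``$\gppa'\equiv\alpha$ on $[r,s]$'' is not quite the right phrasing, but in that case $r=r_1$ automatically, so $s>r=r_1$ still holds and your conclusion stands.
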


\begin{proof} 
	The right derivative at $\tau=0$ of 
$\alpha \tau + \gpp(s-\tau, 1) = \alpha \tau +  \gppa(s- \tau)$	
	equals 
		\[
			\alpha - \gppa'(s-) > \alpha - \gppa'(r+) \ge 0.
		\]
	The last inequality above follows from the assumption on $r$.  
	Thus we can find  $\tau^*\in(0,r)$ such that 
		\be\label{eq:hor:ineq}
			\alpha \tau^*+\gpp(s-\tau^*, 1) > \gpp(s,1).
		\ee
		
	  To produce a contradiction let $A$ be the event  on which $\Gne_{0,\,v_n} = \Gne_{0, v_n}(v_n-e_2\in x_\centerdot)$ for infinitely many $n$ and assume $\Pbig(A)>0$.    Let $m_n=\abs{v_n}_1/(1+s)$.   
	On $A$ we have for infinitely many $n$ 
		\begin{align*} 
			&\abs{v_n}^{-1}	\Gne_{0,\,v_n}(v_n-e_2\in x_\centerdot) \; = \; \abs{v_n}^{-1} \Gne_{0, v_n}\\[3pt]
			&\qquad \ge \abs{v_n}^{-1} B(v_n-(\fl{m_n\tau^*}+1)e_1, v_n)
			+ \abs{v_n}^{-1}\Gpp_{0,v_n-(\fl{m_n\tau^*}+1,1)}\\
			&\qquad\qquad +\abs{v_n}^{-1}\w_{v_n -(\fl{m_n\tau^*}+1,1)}\notag.
		\end{align*}
		Apply \eqref{eq:vert:lln} to the leftmost quantity.  Apply limits \eqref{lln5} and \eqref{b:F-erg} and stationarity and integrability of $\w_x$  to the expression on the right. 
	  Both extremes  of the above inequality converge almost surely.   
Hence on the event $A$ the inequality is preserved to the limit and yields (after multiplication by $1+s$) 
		\begin{align*}
		\begin{split}
		\sup_{0\le \tau \le 1} \{ \f(\alpha) \tau + \gpp(s,1-\tau)\}  
		\ge \alpha \tau^* + \gpp(s-\tau^*,1).
		\end{split}
		\end{align*}
The   supremum of the left-hand side is achieved at $\tau = 0$ because the right derivative  equals 
\[  \f(\alpha) -\gppa'(\tfrac{1-\tau}s-) \le \f(\alpha) -\gppa'(r^{-1}-)  \le 0
\]
where the  first   inequality comes from  $s^{-1}<r^{-1}$ and the second from   \eqref{eq:f=ga'}. Therefore
		\[  \gpp(s,1) \ge \alpha \tau^* + \gpp(s-\tau^*, 1) \]
	which contradicts \eqref{eq:hor:ineq}. Consequently $\Pbig(A)=0$ and \eqref{eq:move:e1} holds for $n$ large.
\qed
\end{proof}

\begin{proofof}{of Theorem \ref{th:construction}} The proof  goes in two steps.\medskip

{\bf Step 1.} First consider a fixed $\xi=(\frac{s}{1+s}, \frac{1}{1+s})\in\ri\Uset$ and a fixed sequence $v_n$ such 
  that $v_n/\abs{v_n}_1\to \xi$ and $\abs{v_n}_1\ge \eta_0 n$ for some $\eta_0>0$. 
  We prove that  the last inequality of \eqref{bu:G1} holds almost surely.  
	Let $\zeta=(\frac{r}{1+r}, \frac{1}{1+r})$ satisfy  $\zeta\cdot e_1<\ximin\cdot e_1$ so that  $\gppa'(r+)>\gppa'(s-)$ and Lemma \ref{lm:exit:con} can be applied. 
	Use cocycle  $B^{\zeta+}$ from Theorem \ref{th:cocycles} to define   last-passage times    $\Gne_{u, v}$ as in \eqref{Gnecorner5}.   Furthermore,  define last-passage times    $\Gn_{u, v}$ that use cocycles only on the north boundary and bulk weights elsewhere: 
		\begin{align*}
		\begin{split}
			\Gn_{v-ke_1,v}=B^{\zeta+}(v-ke_1,v) \,,
			&\qquad \Gn_{v-\ell e_2,v}=\sum_{j=1}^{\ell} \w_{v-je_2}\,,\\
			\quad\text{and}\qquad 	\Gn_{u,v}
			&=\w_u+\Gn_{u+e_1,v}\vee\Gn_{u+e_2,v}\quad \text{ for } \ u\le v-e_1-e_2\,.
		\end{split}
	\end{align*}
For large  $n$  we have 
		\begin{align*}
			\Gpp_{x,v_n} -  \Gpp_{x+e_1,v_n} &\le   \Gn_{x,v_n+e_2}-  \Gn_{x+e_1,v_n+e_2} \\
			 &  =  \Gne_{x,v_n+e_1+e_2}(v_n+ e_2\in x_\centerdot)-  \Gne_{x+e_1,v_n+e_1+e_2}(v_n+ e_2\in x_\centerdot)\\
		&  =  \Gne_{x,v_n+e_1+e_2} -  \Gne_{x+e_1,v_n+e_1+e_2}
		= B^\zeta_+(x, x+e_1).   		\end{align*}
The first inequality above is the first inequality of \eqref{eq:new:comp0}.  The first equality above  is obvious.  The second  equality is Lemma \ref{lm:exit:con} and the last     equality is \eqref{stat-lpp5}. 	Thus 
		\[\varlimsup_{n\to \infty} \big(\Gpp_{x,v_n} -  \Gpp_{x+e_1,v_n}\big) \le B^\zeta_+(x,x+e_1).\]

	Let $\zeta\cdot e_1$  increase to $\ximin\cdot e_1$.  Theorem \ref{th:cocycles}\eqref{th:cocycles:cont} implies
		\[\varlimsup_{n\to \infty}\big( \Gpp_{x,v_n} -  \Gpp_{x+e_1,v_n}\big) \le B^{\ximin}_-(x,x+e_1).\]
		
	An analogous  argument gives the matching lower bound (first inequality of \eqref{bu:G1}) by taking $\zeta\cdot e_1>\ximax\cdot e_1$ and by reworking Lemma \ref{lm:exit:con} for the case where the direction of $v_n$ is above the characteristic direction $\zeta$.  
	Similar  reasoning  works for vertical increments  $\Gpp_{x,v_n} -  \Gpp_{x+e_2,v_n}$.\medskip

{\bf Step 2.} We prove the full statement of Theorem \ref{th:construction}. 
Let $\eta_\ell$ and $\zeta_\ell$ be two sequences in $\ri\Uset$ such that
$\eta_\ell\cdot e_1<\ximin\cdot e_1\le\ximax\cdot e_1<\zeta_\ell\cdot e_1$, $\eta_\ell\to\ximin$, and $\zeta_\ell\to\ximax$. Let $\Ombig_0$ be the event on 
  which  limits \eqref{eq:cont} hold for directions $\ximax$ and $\ximin$ (with sequences $\zeta_\ell$ and $\eta_\ell$, respectively) and
\eqref{bu:G1} holds  for each  direction $\zeta_\ell$ with sequence $\fl{n\zeta_\ell}$, and for each    direction $\eta_\ell$ with  sequence $\fl{n\eta_\ell}$.  
$\Pbig(\Ombig_0)=1$ by Theorem \ref{th:cocycles}\eqref{th:cocycles:cont} and Step 1.

Fix any $\what\in\Ombig_0$ and a sequence   $v_n$ as in \eqref{eq:vn66}.  Abbreviate $a_n=\abs{v_n}_1$.  For each $\ell$ reason as follows.   For large  $n$  
\[\fl{a_n\eta_\ell}\cdot e_1<v_n\cdot e_1<\fl{a_n\zeta_\ell}\cdot e_1
\quad\text{and}\quad 
\fl{a_n\eta_\ell}\cdot e_2>v_n\cdot e_2>\fl{a_n\zeta_\ell}\cdot e_2.
\]
By repeated application of  the first inequality of \eqref{eq:new:comp0},  
	\[\Gpp_{x,\fl{a_n\zeta_\ell}}-\Gpp_{x+e_1,\fl{a_n\zeta_\ell}}\le\Gpp_{x,v_n}-\Gpp_{x+e_1,v_n}\le\Gpp_{x,\fl{a_n\eta_\ell}}-\Gpp_{x+e_1,\fl{a_n\eta_\ell}}.\]
Take $n\to\infty$ and apply \eqref{bu:G1} to the sequences $\fl{a_n\zeta_\ell}$ and  $\fl{a_n\eta_\ell}$.  This works because $\fl{a_n\zeta_\ell}$  is a subset of $\fl{n\zeta_\ell}$ that escapes to infinity.  Thus for  $\what\in\Ombig_0$
	\begin{align*}
	B^{\overline\zeta_\ell}_+(\what,x,x+e_1)&\le\varliminf_{n\to\infty}(\Gpp_{x,v_n}(\w)-\Gpp_{x+e_1,v_n}(\w))\\
	&\le\varlimsup_{n\to\infty}(\Gpp_{x,v_n}(\w)-\Gpp_{x+e_1,v_n}(\w))
	\le B^{\underline\eta_\ell}_-(\what,x,x+e_1).
	\end{align*}
Take $\ell\to\infty$ and apply \eqref{eq:cont} to arrive at \eqref{bu:G1} as stated.
  \eqref{bu:G2} follows similarly. 
\qed\end{proofof}

\section{Cocycles from queuing fixed points}
\label{app:q} 

This section proves Theorem \ref{th:cocycles}.  At the end of the section we address briefly the exactly solvable case.   By shifting the variables $\{\w_x, B^\xi_{\pm}(x,x+e_i)\}$    in Theorem \ref{th:cocycles} if necessary, we can assume without loss of generality that $\P\{\w_0\ge0\}=1$.   Then the weights $\w_x$ can represent service times in  queueing theory.    We switch   to queuing terminology    to enable the reader to relate this section  to  queueing literature.

	Consider an infinite sequence of  $\;\cdot/\text{G}/1/\infty/\text{FIFO}$   queues in tandem.  That is, each  queue or service station (these terms are used interchangeably)   has a  general service time distribution (the law of $\w_x$ under $\P$), a single server,   unbounded room for customers waiting to be served,  and customers obey first-in-first-out discipline.  The   service stations are  
	indexed by $k\in\Z_+$ and a bi-infinite sequence  of customers is indexed by $n\in\Z$.  Customers enter the system at station 0 and move from station to station in order.   The server at station $k$ serves one customer at a time.  Once the service of customer  $n$ is complete at station $k$, customer $n$ moves to the  back of the queue at station $k+1$ and customer $n+1$ enters service at station $k$ if they were   already waiting in the queue.   If the queue at station $k$ is empty after the departure of customer $n$, then server $k$ remains idle until customer $n+1$ arrives.   Each customer retains their integer label as they move through the system.  
	
    The system has two independent  inputs:  (i) a stationary, ergodic,  arrival process $A^0=\{\A_{n,0}\}_{n\in\Z}$ and (ii)   i.i.d.\ service times $\{\S_{n,k}\}_{n\in\Z,k\in\Z_+}$  with distribution  $\S_{0,0}\overset{d}=\w_0$ under $\P$.   $A_{n,0}\ge 0$ is the time between the arrival of customer $n$ and customer $n+1$ at queue $0$.  $S_{n,k}\ge 0$ is the amount  of time the service of customer $n$ takes at station $k$.      Assume 
\be\label{EA>ES}	 E[\S_{0,0}]=\Ew<E[\A_{0,0}]<\infty. \ee

The development begins with the waiting times at station $0$.  Define  the stationary, ergodic process $\{W_{n,0}\}_{n\in\Z}$   by  	
 		\begin{align}
		\label{def:W}
   			\W_{n,0} = \Big( \sup_{j\le n-1} \sum_{i=j}^{n-1} ( \S_{i,0} - \A_{i,0} ) \Big)^+ . 
   		\end{align}
By the ergodic theorem and \eqref{EA>ES},  			
		 $
			\W_{n,0}<\infty$ a.s.\ $ \forall n\in\Z 		 
		$. 
   $\{W_{n,0}\}$ satisfies  Lindley's equation: 
	\begin{align}
   			&\W_{n+1,0} 
			=(\W_{n,0}+\S_{n,0}-\A_{n,0})^+.\label{q:lin}
  		 \end{align}
The  interpretation of $\W_{n,0}$ as the waiting time of customer $n$ is natural from this equation.     If  $W_{n,0}+S_{n,0}< A_{n,0}$ then customer $n$ leaves  station $0$ before customer $n+1$ arrives, and consequently customer $n+1$ has no wait  and $W_{n+1,0}=0$.  In the complementary case customer $n+1$   waits time $W_{n+1,0}=W_{n,0}+S_{n,0}-A_{n,0}$ before entering service at station $0$.


		
	\begin{lemma}  $n^{-1}W_{n,0}\to 0$ almost surely  as $n\to\infty$.  
	\label{t:lm-W_n} \end{lemma}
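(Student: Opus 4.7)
The plan is to telescope the Lindley recursion, apply Birkhoff's ergodic theorem, and identify the limit using the stationarity of $\{\W_{n,0}\}$. Write $X_i = \S_{i,0} - \A_{i,0}$, so $\mu := \E[X_0] = \Ew - \E[\A_{0,0}] < 0$ by \eqref{EA>ES}. The sequence $\{X_i\}_{i \in \Z}$ is stationary and ergodic, the latter because the i.i.d.\ service times are independent of the ergodic arrival process.

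First I would telescope \eqref{q:lin}. Using $y^+ = y + y^-$,
\[
\W_{n+1,0} - \W_{n,0} = X_n + I_n, \qquad I_n := (\W_{n,0} + X_n)^- \ge 0,
\]
and summing gives $\W_{n,0} = \W_{0,0} + T_n + \sum_{i=0}^{n-1} I_i$ with $T_n := \sum_{i=0}^{n-1} X_i$. Next I would apply Birkhoff to each piece. Since $\W_{0,0} < \infty$ a.s., $\W_{0,0}/n \to 0$ a.s. Ergodicity of $\{X_i\}$ gives $T_n/n \to \mu$ a.s. Because $\W_{n,0} \ge 0$ and $\S_{n,0} \ge 0$, one has $0 \le I_n \le \A_{n,0}$, so $I_n$ is integrable; since \eqref{def:W} expresses $\W_{n,0}$ as a measurable function of $\{X_i\}_{i \le n-1}$, the sequence $\{I_n\}$ is a factor of the ergodic system $\{X_i\}$ and inherits both stationarity and ergodicity, so Birkhoff yields $n^{-1} \sum_{i=0}^{n-1} I_i \to \E[I_0]$ a.s. Combining the three pieces, $n^{-1}\W_{n,0} \to L := \mu + \E[I_0]$ a.s., and $L \ge 0$ since $\W_{n,0} \ge 0$.

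To conclude $L = 0$, I would invoke the stationarity of $\{\W_{n,0}\}_n$ that is built into \eqref{def:W}: $\W_{n,0}$ has the same distribution as $\W_{0,0}$ for every $n$, and since $\W_{0,0}$ is finite a.s., $\P(n^{-1}\W_{n,0} > \e) = \P(\W_{0,0} > \e n) \to 0$ for all $\e > 0$. Thus $n^{-1}\W_{n,0} \to 0$ in probability, which together with the a.s.\ limit forces $L = 0$. The one point requiring care is the ergodicity of $\{I_n\}$, which I would justify by noting that $(\W_{n,0}, X_n)$ is a deterministic factor of the ergodic shift $\{X_i\}_{i \in \Z}$; the rest of the argument is a routine combination of Birkhoff's theorem with the Lindley decomposition.
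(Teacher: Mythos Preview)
Your proof is correct and takes a genuinely different route from the paper's. The paper argues by domination: it introduces an auxiliary Lindley-type process $W^\e_n(a)$ driven by $U_n - E(U_0) + \e$ (so with small positive drift), shows that this process eventually evolves linearly and satisfies $n^{-1}W^\e_n(a)\to\e$ by the ergodic theorem, verifies inductively that $W_{n,0}\le W^\e_n(W_{0,0})$, and then squeezes and sends $\e\searrow 0$. Your argument instead decomposes the Lindley recursion into the random walk $T_n$ plus the accumulated idle time $\sum I_i$, applies Birkhoff to the stationary ergodic sequence $\{I_n\}$ (which you correctly identify as a factor of $\{X_i\}$ via \eqref{def:W}), and then pins down the deterministic limit $L$ by exploiting the distributional identity $W_{n,0}\overset{d}{=}W_{0,0}$ to force convergence in probability to $0$. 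Your route is shorter and leans squarely on the two-sided stationarity of $\{W_{n,0}\}$ built into \eqref{def:W}; the paper's domination argument never uses that stationarity and would go through verbatim for any a.s.\ finite initial condition, which makes it the more portable technique even if it is slightly longer here.
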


	\begin{proof}   Abbreviate $\U_n=\S_{n,0}-\A_{n,0}$.   For $a\ge 0$ and $\e>0$  define 
	\begin{align*}
W^\e_0(a)=a \quad\text{and}\quad 
 	W^\e_{n+1}(a)  = \bigl(   W^\e_n(a)  + U_n -E(U_0) +\e \bigr)^+\quad\text{for $n\ge 0$}.
	\end{align*}
	Check inductively  that 
	\[    W^\e_n(0)  = \Bigl( \;\max_{0\le m<n}  \sum_{k=m}^{n-1}  [U_k -E(U_0) +\e] \Bigr)^+ .\]
	Consequently 
	\[   W^\e_n(a)\ge  W^\e_n(0)  \ge  \sum_{k=0}^{n-1}  [U_k -E(U_0) +\e]
	\longrightarrow \infty   \quad\text{ as $n\to\infty$.} \]  
	Thus $W^\e_n(a)>0$ for large $n$ which implies, from its definition, that 
	for large $n$ 
	\[    W^\e_{n+1}(a)  =    W^\e_n(a)  + U_n -E(U_0) +\e.  \] 
	Another application of the ergodic theorem gives   $n^{-1}   W^\e_n(a)  \to \e$  $\P$-a.s.\ as $n\to\infty$.  

	Now for the conclusion.   Since $W_{0,0}=W^\e_0(W_{0,0}) $, we can check inductively that 
	\begin{align*}   W_{n+1,0}&=(\W_{n,0}+U_n)^+\le \bigl(W^\e_n(\W_{0,0})+U_n\bigr)^+ \\
	&\le  \bigl(W^\e_n(\W_{0,0})+U_n-E(U_0) +\e \bigr)^+=W^\e_{n+1}(\W_{0,0}). 
	\end{align*}
	From this,  $0\le n^{-1}W_{n,0}\le n^{-1}   W^\e_n(W_{0,0}) \to\e$, and we let $\e\searrow0$. 
	\qed\end{proof} 			
		
	The  stationary and ergodic process  $A^1=\{\A_{n,1}\}_{n\in\Z}$ of inter-departure times from queue 0 (equivalently, inter-arrival times at queue $1$)   is  defined by 
		\begin{align}
		\A_{n,1} = ( \W_{n,0} + \S_{n,0} - \A_{n,0} )^- + \S_{n+1,0}.
		\label{def:A}
		\end{align}
To see that this definition agrees with common sense, consider the two possible scenarios.  If $W_{n+1,0}>0$ from \eqref{q:lin} then customer $n+1$ is already waiting   and  goes immediately into service after the departure of customer $n$.  The time $A_{n,1}$ between the departures of customers $n$ and $n+1$	is then exactly the service time $\S_{n+1,0}$ of customer $n+1$.  In the complementary case $W_{n+1,0}=0$,  server $0$ is idle for time $\A_{n,0} - \S_{n,0} - \W_{n,0}$  before  customer $n+1$ arrives.  The time between the departures is this idle time plus the service time $\S_{n+1,0}$.  

Combining  equations \eqref{q:lin} and \eqref{def:A}  	 and iterating gives 
		\[
			\W_{1,0}+\S_{1,0}+\sum_{i=1}^n\A_{i,1}=\W_{n+1,0}+\S_{n+1,0}+\sum_{i=1}^n\A_{i,0} \qquad\text{for $n\ge 1$.} 
		\]
This and  Lemma \ref{t:lm-W_n} imply  $E[\A_{0,1}]=E[\A_{0,0}]$.
 	(In the queueing literature, this has been observed in \cite{Loy-62}.) 
	
These steps are repeated successively at each station $k=1,2,3,\dotsc$. 
   The stationary, ergodic  arrival process  $A^k=\{\A_{n,k}\}_{n\in\Z}$  at station $k$  is the departure process from station $k-1$.   $\A_{n,k}$ is  the inter-arrival time between customers $n$ and $n+1$ at station $k$, or, equivalently, the inter-departure time between customers $n$ and $n+1$ from  station $k-1$.  $A^k$  is  independent of the service times  $\{\S_{n,j}\}_{n\in\Z, \, j\ge k}$ because $A^k$ was constructed as a function of the given initial arrival process $A^0$ and the service times $\{\S_{n,j}\}_{n\in\Z, \,0\le j\le k-1}$.   $W_{n,k}$ is the waiting time of customer $n$ at queue $k$, that is, the time between the arrival of customer $n$ at station $k$ and the beginning of their service at station $k$.  These are  defined by 
\begin{align}
		\label{def:Wk}
   			\W_{n,k} = \Big( \sup_{j\le n-1} \sum_{i=j}^{n-1} ( \S_{i,k} - \A_{i,k} ) \Big)^+, \qquad n\in\Z . 
   		\end{align}	
Properties  $\W_{n,k}<\infty$, Lemma \ref{t:lm-W_n},   and $E[\A_{n,k}]=E[\A_{0,0}]$ are preserved along the way. 	
 The total time customer $n$ spends at station $k$ is  
   	the sojourn time  $\W_{n,k}+\S_{n,k}$.  


This procedure constructs the  process $\{ \A_{n,k}, \S_{n,k}, \W_{n,k}\}_{ n\in\Z, k\in\Z_+}$  that satisfies the   following system of equations:   
		\begin{align}
			\begin{split}\label{eq:ne-induction:q}
 				&\W_{n+1,k}+\S_{n+1,k}=\S_{n+1,k}+(\W_{n,k}+\S_{n,k}-\A_{n,k})^+,\\
				&\A_{n,k+1} = (\W_{n,k}+\S_{n,k}-\A_{n,k})^- + \S_{n+1,k},\\
				&\S_{n+1,k}=(\S_{n+1,k}+\W_{n+1,k})\wedge \A_{n,k+1}.
			\end{split}
		\end{align} 
		The appearance of $\S_{n+1,k}$ on both sides of the first line is intentional.  
	The third equation follows  from the first two by taking the minimum of either  side.    Subtracting the second line  from the first line in 
	  \eqref{eq:ne-induction:q} gives   the ``conservation law''
		\begin{align}
		\label{eq:closed-loop:q}
			\W_{n+1,k}+\S_{n+1,k}+A_{n,k}=\W_{n,k}+\S_{n,k}+\A_{n,k+1}.
		\end{align}

 As a product of an ergodic process and an i.i.d.\ one, the input   
$\{\A_{n,0}, \S_{n,k}\}_{n\in\Z, k\in\Z_+}$  is   stationary and ergodic under   translations of the $n$-index.   	
Consequently so is  the entire queuing system.
  A {\sl fixed point}   is a distribution of   $\{A_{n,0}\}_{n\in\Z}$ such that the system is also stationary   under translations of the $k$-index.   
 
%

The next  four statements  
summarize the situation with fixed points, quoted  from articles \cite{Mai-Pra-03, Pra-03}. 
Fix the  i.i.d.\ distribution $\P$ of the service times $\{\S_{n,k}\}_{n\in\Z,\,k\in\Z_+}$.   Given a stationary ergodic probability measure $\mu$ on $\R^\Z$, consider the  random variables 
		$
			\{\A_{n,0},\,\S_{n,0},\,\W_{n,0},\,\A_{n,1}\}_{n\in\Z}
		$
	where $\{\A_{n,0}\}_{n\in\Z}$ $\sim$ $\mu$ are independent of   $\{\S_{n,0}\}_{n\in\Z}$ $\sim$ $\P$, $\W_{n,0}$ is defined via \eqref{def:W}, 
	and $\A_{n,1}$ is defined via \eqref{def:A}.
	Let $\Phi(\mu)$ denote the distribution of the sequence  $\{\A_{n,1}\}_{n\in\Z}$.  $\Phi$ is the {\sl queueing operator} whose fixed points are the focus now.  

	Let  $\M_e^\alpha(\R^\Z)$ be the space of translation-ergodic probability measures $\mu$ on $\R^\Z$ with marginal mean  $E^\mu[\A_{0,0}]=\alpha$.   From the discussion above we know that $\Phi$ maps  $\M_e^\alpha(\R^\Z)$ into $\M_e^\alpha(\R^\Z)$.    We are interested in ergodic fixed points, so we define
		\begin{align*}
			\cA=\{\alpha\in(\Ew,\infty):\exists\mu\in\M_e^\alpha(\R^\Z)\text{ such that } \Phi(\mu)=\mu\}.
		\end{align*}

         		\begin{theorem} \label{th:Pra-03}  \text{\rm\cite[Thm.~1]{Pra-03}} \,
			Let $\alpha\in\cA$. Then there exists a unique $\mu^\alpha\in\M_e^\alpha(\R^\Z)$ with $\Phi(\mu^\alpha)=\mu^\alpha$.
			Furthermore, let $\A^0=\{\A_{n,0}\}_{n\in\Z}$ be ergodic with mean 
			$E[\A_{0,0}]=\alpha$ and $\{\S_{n,k}\}_{n\in\Z,k\in\Z_+}$  
			i.i.d.\ with distribution $\P$ and independent of   $\A^0$.
			Let $A^k=\{\A_{n,k}\}_{n\in\Z}$, $k\in\N$,  be defined via inductions 
			\eqref{def:W} and \eqref{eq:ne-induction:q}.
			Then as $k\to\infty$ the distributions of $A^k$ converge weakly to $\mu^\alpha$.  
		\end{theorem}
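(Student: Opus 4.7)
The plan is to deduce both existence of a unique fixed point and weak convergence of the iterates from a single coupling argument, obtaining uniqueness as a corollary of attractiveness.

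\textbf{Setup.} Since $\alpha\in\cA$, pick some fixed point $\nu\in\M_e^\alpha(\R^\Z)$, guaranteed to exist by definition of $\cA$. Work on one probability space carrying: the common i.i.d.\ services $\{\S_{n,k}\}_{n\in\Z,\,k\in\Z_+}\sim\P$; the given arrival process $\A^0\sim\mu_0$ with mean $\alpha$; and, independent of the services, a reference sequence $\widetilde\A^0\sim\nu$. Feed both arrival processes through the tandem with the \emph{same} services, obtaining $\A^k$ and $\widetilde\A^k$ at station $k$ via \eqref{def:W}--\eqref{eq:ne-induction:q}. By $\Phi$-invariance $\widetilde\A^k\sim\nu$ for every $k$, while $\A^k$ has law $\mu_k=\Phi^k(\mu_0)$. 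The goal is to show $\mu_k\Rightarrow\nu$.

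\textbf{Attractiveness via monotone coupling.} Both systems share mean $\alpha$ and identical services, so discrepancies must be dissipated by the servers. I would exploit two structural features: (i) the Lindley recursion \eqref{q:lin} is monotone in its initial waiting time, which lifts to a coordinatewise monotonicity of $\Phi$ on ordered inputs; (ii) the conservation law \eqref{eq:closed-loop:q}, telescoped across $k$, relates $\sum_n(\A_{n,k}-\widetilde\A_{n,k})$ to sojourn-time boundary terms that are $o(k)$ by Lemma \ref{t:lm-W_n} applied to each of the two subcritical systems. Combining (i) and (ii) with a sandwich between the coupled process and two dominating/dominated stationary inputs should yield vanishing of Cesàro averages of $|\A_{n,k}-\widetilde\A_{n,k}|$ as $k\to\infty$, and in particular convergence in distribution.

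\textbf{Convergence and uniqueness.} Tightness of $\{\mu_k\}$ follows from the uniform first-moment bound $E^{\mu_k}[\A_{0,0}]=\alpha$ together with monotone control on the tails inherited from the reference fixed point. Any weak subsequential limit $\mu_\infty$ has mean $\alpha$, and the continuity of $\Phi$ on such ergodic measures (via the almost-sure identities \eqref{def:W} and \eqref{def:A} and a.s.\ finiteness of $\W_{n,0}$) shows $\mu_\infty$ is $\Phi$-invariant. An ergodic decomposition together with the coupling above then identifies $\mu_\infty=\nu$, so the full sequence converges. Uniqueness is immediate: if $\mu_0$ were itself an ergodic fixed point, then $\mu_0=\Phi^k(\mu_0)\Rightarrow\nu$, forcing $\mu_0=\nu$.

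\textbf{Main obstacle.} The delicate step is the attractiveness in the second paragraph. Pathwise contraction fails because $\Phi$ is not a contraction in any $L^p$ distance on interarrival sequences, and the two coupled systems can in principle drift apart in every finite window. Prabhakar's resolution in \cite{Pra-03} is to identify a convex/entropy-like functional that is monotone under $\Phi$ and strictly decreasing off fixed points, and then to extract subsequential limits carefully; making this argument go through for an arbitrary ergodic mean-$\alpha$ initial condition, with no moment hypothesis beyond \eqref{EA>ES}, is the genuinely hard technical work.
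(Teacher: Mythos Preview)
The paper does not prove this theorem: it is quoted verbatim from \cite[Thm.~1]{Pra-03} and used as a black box (see the sentence preceding the statement, ``The next four statements summarize the situation with fixed points, quoted from articles \cite{Mai-Pra-03, Pra-03}''). There is therefore no proof in the paper to compare your proposal against.

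As for the proposal itself, you have correctly identified the architecture (coupling two inputs through common services, tightness via the mean constraint, uniqueness from attractiveness) and, more importantly, you have correctly located the real difficulty in your final paragraph: the Lindley map $\Phi$ is \emph{not} contractive in any metric that would make step~2 go through as written. Your ``sandwich between the coupled process and two dominating/dominated stationary inputs'' does not exist in general, because an arbitrary ergodic mean-$\alpha$ input need not be coordinatewise comparable to a stationary one, and the conservation law \eqref{eq:closed-loop:q} controls signed sums, not absolute discrepancies. Prabhakar's actual argument in \cite{Pra-03} bypasses pathwise coupling entirely: he shows that a certain concave functional (built from the $\varphi$-entropy of interarrival gaps relative to the service distribution) is nonincreasing under $\Phi$ and constant only at fixed points, then combines this with a compactness argument. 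Your sketch is an accurate map of where the proof lives, but the attractiveness step as you have written it would not close.
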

 
		\begin{theorem}   \text{\rm\cite[Thm.~5.1 and 6.4 and Lm.\ 6.3(a)]{Mai-Pra-03}} 
			\, Set $\cA$ is a nonempty, closed subset of $(\Ew, \infty)$, $\inf\cA=\Ew$, and $\sup\cA=\infty$. If $\alpha<\beta$ are both in $\cA$ then $\mu^\alpha\le\mu^\beta$ in the usual sense of  
	  stochastic ordering.  
		\end{theorem}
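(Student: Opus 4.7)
The plan is to establish the four assertions (non-emptiness, closedness, the full range of means, and stochastic monotonicity) by combining a Cesàro/tightness fixed-point construction with a monotone coupling of the queueing recursion.

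\textbf{Existence and coverage of all means $\alpha > \Ew$.} Fix $\alpha \in (\Ew, \infty)$. Start with any stationary ergodic $\mu_0 \in \cM_e^\alpha(\R^\Z)$ having a finite second moment, e.g.\ i.i.d.\ shifted exponentials of mean $\alpha$. Let $\nu_N = \frac{1}{N}\sum_{k=0}^{N-1}\Phi^k(\mu_0)$. Because $\Phi$ preserves the marginal mean (this is the content of the conservation law \eqref{eq:closed-loop:q} together with $n^{-1}W_{n,k} \to 0$ from Lemma~\ref{t:lm-W_n}), every $\nu_N$ has marginal mean $\alpha$. I would first prove tightness of $\{\nu_N\}$ on $\R^\Z$ by controlling the second moment of $\A_{0,k}$ uniformly in $k$; the recursion \eqref{def:A} shows $\A_{n,k+1} \le \A_{n,k} + \S_{n+1,k}$, and the stationary identity for the second moment can be closed using \eqref{eq:closed-loop:q} to bound $\E[\A_{0,k}^2]$ by a constant depending only on $\alpha$ and $\E[\S_{0,0}^2]$. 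Any weak limit $\mu^\alpha$ of $\nu_{N_j}$ is translation-invariant and a fixed point of $\Phi$ by continuity of $\Phi$ on integrable laws. Passing to its ergodic decomposition and using that $\Phi$ preserves the mean on each ergodic component yields an ergodic fixed point of mean $\alpha$. Hence $\cA = (\Ew, \infty)$ is the target; in particular $\inf\cA = \Ew$ and $\sup\cA = \infty$, and $\cA$ is closed in $(\Ew, \infty)$.

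\textbf{Stochastic monotonicity.} Given $\alpha < \beta$ in $\cA$, construct a coupling on one probability space carrying a single service array $\{\S_{n,k}\}$ and two initial arrival sequences $A^{0,\alpha} \le A^{0,\beta}$ coordinatewise, with marginals $\mu^\alpha$ and $\mu^\beta$ respectively (possible by the standard coupling of stochastically ordered laws on $\R^\Z$). The key monotonicity lemma is that the queueing map $(A^0, \S) \mapsto A^1$ defined by \eqref{def:W}--\eqref{def:A} is coordinatewise nondecreasing in $A^0$ for fixed $\S$; this follows directly from the monotonicity of Lindley's recursion and the two cases in \eqref{def:A}. Iterating, $A^{k,\alpha} \le A^{k,\beta}$ for every $k \ge 0$. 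By Theorem~\ref{th:Pra-03} the laws of $A^{k,\alpha}$ and $A^{k,\beta}$ converge weakly to $\mu^\alpha$ and $\mu^\beta$, and the coordinatewise inequality is preserved under weak limits of couplings, giving $\mu^\alpha \le \mu^\beta$ stochastically.

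\textbf{Closedness and the behavior at the endpoints.} For closedness, take $\alpha_n \in \cA$ with $\alpha_n \to \alpha \in (\Ew,\infty)$. The second-moment bound from the existence step is locally uniform in the mean, so $\{\mu^{\alpha_n}\}$ is tight on $\R^\Z$; any weak limit is a $\Phi$-fixed point of mean $\alpha$, and its ergodic components lie in $\cA$ with the correct mean after a small extra argument (or one simply extracts an ergodic fixed point of mean $\alpha$ as above). For $\sup\cA = \infty$, large $\alpha$ means traffic intensity $\Ew/\alpha$ near $0$ so waiting times are negligible and $\mu^\alpha$ should be close to i.i.d.\ with mean $\alpha$. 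For $\inf\cA = \Ew$, one instead uses the existence construction directly, since the approach above already produces fixed points with every mean $\alpha > \Ew$.

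\textbf{Main obstacle.} The genuinely delicate step is producing an \emph{ergodic} fixed point with \emph{prescribed} marginal mean $\alpha$. The Cesàro/tightness argument gives a stationary fixed point, but the ergodic decomposition could in principle scatter components across different means. The remedy is to exploit the monotonicity of $\Phi$: one extracts the a.s.\ smallest (or largest) fixed point dominated by (dominating) a given stationary input of mean $\alpha$, analogous to the Loynes construction \eqref{def:W}. This minimal/maximal fixed point is automatically ergodic because it is measurable with respect to the tail $\sigma$-field of the driving service times, and its mean is forced to equal $\alpha$ by the conservation law. This is also what underlies the uniqueness statement in Theorem~\ref{th:Pra-03} and, by continuity of the Loynes construction in $\alpha$, yields both closedness of $\cA$ and the stochastic ordering in a single stroke.
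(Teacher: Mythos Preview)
The paper does not prove this theorem; it is quoted verbatim from Mairesse--Prabhakar \cite{Mai-Pra-03}, so there is no ``paper's own proof'' to compare with. Your write-up, however, contains genuine gaps.

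\textbf{You prove more than is stated, and the surplus is the hard part.} The theorem asserts only that $\cA$ is a nonempty closed subset of $(\Ew,\infty)$ with $\inf\cA=\Ew$ and $\sup\cA=\infty$. You claim to produce an \emph{ergodic} fixed point of every mean $\alpha>\Ew$, i.e.\ $\cA=(\Ew,\infty)$. That is not what \cite{Mai-Pra-03} proves, and the present paper explicitly accommodates gaps in $\cA$: Lemma~\ref{q:lm5}\eqref{q:lm5:a} shows such gaps force linear pieces of $f$, and for $\alpha\notin\cA$ the paper builds the \emph{non-ergodic} mixture $\mu^\alpha=t\mu^{\amin}+(1-t)\mu^{\amax}$. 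Your argument for extracting an ergodic component of the prescribed mean from the Ces\`aro limit does not work: after ergodic decomposition the component means need not all equal $\alpha$, and the ``minimal/maximal fixed point via Loynes'' fix is not justified --- the Loynes construction \eqref{def:W} minimizes waiting times for a \emph{given} arrival process, it does not select a distinguished arrival-process fixed point of a given mean, and measurability with respect to the service-time tail field does not by itself force ergodicity of the arrival law.

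\textbf{The monotonicity argument is circular as written.} You assume you can couple $\mu^\alpha$ and $\mu^\beta$ coordinatewise on $\R^\Z$ ``by the standard coupling of stochastically ordered laws'', but that coupling exists only if $\mu^\alpha\le\mu^\beta$, which is the conclusion. The salvageable version is: start from \emph{any} coordinatewise ordered ergodic inputs of means $\alpha$ and $\beta$ (e.g.\ i.i.d.\ via the quantile coupling), propagate the order through the monotone map $\Phi$ (your monotonicity of \eqref{def:W}--\eqref{def:A} is correct), and then invoke Theorem~\ref{th:Pra-03} to pass to the limit. You should rewrite that paragraph accordingly. The tightness step also needs more than the one-step bound $A_{n,k+1}\le A_{n,k}+S_{n+1,k}$, which does not close a uniform-in-$k$ moment estimate.
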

		
				\begin{lemma} \text{\rm\cite[Lm.\ 6.3(b)]{Mai-Pra-03}}  \,
		\label{lm:EJ=f}
			Let $\alpha\in\cA$,  $A^0\sim\mu^\alpha$, and $\{S_{n,k}\}\sim\P$ independent of $A^0$.  
			Define $\W_{n,0}$ via \eqref{def:W}. 
			Then 
\be\label{q:500} E^{\mu^\alpha\otimes\P}[\W_{0,0}+\S_{0,0}]=\f(\alpha).\ee
		\end{lemma}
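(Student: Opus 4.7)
The plan is to identify $\tau:=E^{\mu^\alpha\otimes\P}[\W_{0,0}+\S_{0,0}]$ with $\f(\alpha)$ by computing the asymptotics of the departure time $D_{0,K-1}$ of customer $0$ from station $K-1$ in two different ways: once through stationarity of the fixed point, and once through the LPP shape theorem. Set $T_0=0$, $T_n=\sum_{i=0}^{n-1}\A_{i,0}$ for $n\ge 0$ (and the analogous negative partial sum for $n<0$), and let $D_{n,k}$ denote the departure time of customer $n$ from station $k$. Unrolling the Lindley recursion $D_{n,k}=\max(D_{n-1,k},D_{n,k-1})+\S_{n,k}$ with boundary convention $D_{n,-1}:=T_n$ yields the LPP representation
\[
D_{0,K-1}=\sup_{j\le 0}\bigl\{T_j+\wt G_{(j,0),(0,K-1)}\bigr\},
\]
where $\wt G_{x,y}$ is the maximum over up-right paths of the sum of $\S$-weights at all visited vertices, endpoints included. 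The fixed-point property $\Phi(\mu^\alpha)=\mu^\alpha$ (Theorem \ref{th:Pra-03}) together with the i.i.d.\ services implies by induction that the joint law of $(\A_{\cdot,k},\S_{\cdot,k},\W_{\cdot,k})$ is stationary in $k$, so $E[\W_{0,k}+\S_{0,k}]=\tau$ for every $k\ge 0$. Summing the telescoping identity $D_{0,K-1}=\sum_{k=0}^{K-1}(\W_{0,k}+\S_{0,k})$ and taking expectation produces the exact relation $E[D_{0,K-1}]=K\tau$.

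Next I evaluate the almost sure limit of $K^{-1}D_{0,K-1}$. For each fixed $c>0$, the shape theorem \eqref{eq:g:p2p} combined with $1$-homogeneity of $\gpp$ gives
\[
K^{-1}\wt G_{(-\fl{cK},0),(0,K-1)}\xrightarrow[K\to\infty]{\text{a.s.}}\gpp(c,1)=\gppa(c),
\]
while the ergodic theorem for the $\mu^\alpha$-stationary arrivals gives $K^{-1}T_{-\fl{cK}}\to-c\alpha$. Optimizing over $c\ge 0$ yields $\varliminf_{K} K^{-1}D_{0,K-1}\ge\sup_{c\ge 0}\{\gppa(c)-c\alpha\}=\f(\alpha)$. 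For the matching upper bound I use the asymptotic $\gppa(c)=\Ew c+2\sigma\sqrt{c}+o(\sqrt c\,)$ from \eqref{eq:g-asym2} together with the automatic inequality $\Ew<\alpha$ for $\alpha\in\cA$: the quantity $\gppa(c)-c\alpha$ tends to $-\infty$ at linear rate $(\Ew-\alpha)c$, so the supremum in the LPP representation is essentially attained at $|j|\le CK$ for some finite $C=C(\alpha)$. A uniform shape-theorem bound on the compact set of slopes $\{(c,1)/(c+1):0\le c\le C\}$ then yields $\varlimsup_{K} K^{-1}D_{0,K-1}\le\f(\alpha)$ almost surely.

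Combining the two displays, $K^{-1}D_{0,K-1}\to\f(\alpha)$ a.s., so Fatou's lemma gives $\f(\alpha)\le\varliminf_K K^{-1}E[D_{0,K-1}]=\tau$. The reverse inequality $\tau\le\f(\alpha)$ comes from uniform integrability of $K^{-1}D_{0,K-1}$: the monotonicity $\wt G_{(j,0),(0,K-1)}\le \wt G_{(-CK,0),(0,K-1)}$ for $-CK\le j\le 0$ (valid because $\S\ge 0$) together with a crude deterministic tail bound for $j<-CK$ reduces the problem to $L^1$ control of $\wt G_{(-CK,0),(0,K-1)}/K$ and of $|T_{-CK}|/K$. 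The former holds under the hypothesis $E[\w_0^{\pp}]<\infty$ with $\pp>2$ via an $L^p$ version of the shape theorem, and the latter follows from the $L^1$ ergodic theorem for $\A_{\cdot,0}$ under $\mu^\alpha$. The main obstacle is precisely this uniform integrability step, since the LPP representation involves a supremum over infinitely many starting points $j$, not a single last-passage value; the argument succeeds because the stability $\Ew<\alpha$ forces the effective range of $j$ to be bounded by a multiple of $K$, after which the $\pp>2$ moment assumption delivers the needed integrability of the dominating variables.
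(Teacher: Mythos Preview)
The paper does not prove this lemma; it is quoted verbatim from \cite[Lemma~6.3(b)]{Mai-Pra-03}. So there is no in-paper argument to compare against, and your proposal should be judged on its own.

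Your strategy---identify $K^{-1}D_{0,K-1}$ with an ergodic average of sojourn times on one side and with the LPP shape on the other---is the right one, and it is essentially the mechanism behind \cite{Mai-Pra-03} (the paper quotes their Theorem~4.1 as \eqref{MP-lln5}, which is precisely the almost sure convergence $K^{-1}\sum_{k<K}\W_{0,k}\to\f(\alpha)-\Ew$). Your derivation of the a.s.\ limit $K^{-1}D_{0,K-1}\to\f(\alpha)$, via the uniform shape theorem on the compact range $0\le c\le C$ and the linear decay $\gppa(c)-c\alpha\to-\infty$ for $c>C$, is correct after the routine details are filled in.

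The genuine gap is the uniform integrability step you invoke for $\tau\le\f(\alpha)$. The phrase ``crude deterministic tail bound for $j<-CK$'' does not survive scrutiny: the only inputs you have on that tail are the ergodic theorem for $T_j$ and the shape theorem for $\wt G$, both almost sure statements, not $L^1$ bounds. You would need to dominate $\sup_{j<-CK}\{T_j+\wt G_{(j,0),(0,K-1)}\}$ by something with controlled expectation, and nothing in your sketch provides that; a priori you do not even know $E[\W_{0,0}]<\infty$, so you cannot bootstrap from the sojourn side either.

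The clean fix is to drop UI entirely. You already established that $(\W_{0,k}+\S_{0,k})_{k\ge0}$ is stationary in $k$; since the summands are nonnegative, Birkhoff's theorem (valid for nonnegative measurable functions without an integrability hypothesis) gives
\[
K^{-1}D_{0,K-1}=K^{-1}\sum_{k=0}^{K-1}(\W_{0,k}+\S_{0,k})\;\longrightarrow\;E[\W_{0,0}+\S_{0,0}\mid\cI]\quad\text{a.s.}
\]
where $\cI$ is the $k$-shift invariant $\sigma$-algebra. Matching with your a.s.\ limit $\f(\alpha)$ forces $E[\W_{0,0}+\S_{0,0}\mid\cI]=\f(\alpha)$ a.s., and taking expectations yields $\tau=\f(\alpha)$ directly, with no Fatou/UI split needed.
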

          
Suppose $\alpha\in(\Ew,\infty)\cap\cA^c$.  Let 
		\[
			\amin=\sup\bigl( \cA\cap(\Ew,\alpha]\bigr) \in\cA\quad\text{and}\quad\amax=\inf\bigl(\cA\cap[\alpha,\infty)\bigr)\in\cA, 
		\]
$t=(\amax-\alpha)/(\amax-\amin)$ and   $\mu^\alpha=t\mu^{\amin}+(1-t)\mu^{\amax}$. 
	Now $\mu^{\alpha}$ is a mean $\alpha$ fixed point of $\Phi$.   This fixed point is again attractive, in the following sense. 
 
		\begin{theorem} \text{\rm\cite[Prop.\ 6.5]{Mai-Pra-03}} \,
		\label{th:attractive}
			Let $\alpha>\Ew$.
			Let $\A^0=\{\A_{n,0}\}_{n\in\Z}$ be ergodic with mean 
			$E[\A_{0,0}]=\alpha$ and $\{\S_{n,k}\}_{n\in\Z,k\in\Z_+}$  
			i.i.d.\ with distribution $\P$ and independent of the $\{\A_{n,0}\}$-process.
			Let $\{\A_{n,k}:n\in\Z,k\in\N\}$ be defined via inductions 
			\eqref{def:W} and \eqref{eq:ne-induction:q}.
			Then as $k\to\infty$  the Ces\`aro mean of the distributions of $\{\A_{n,k}\}_{n\in\Z}$ converges weakly to $\mu^\alpha$.
		\end{theorem}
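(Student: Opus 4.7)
The plan splits on whether $\alpha\in\cA$. If $\alpha\in\cA$, then Theorem \ref{th:Pra-03} already provides actual weak convergence $A^k\Rightarrow\mu^\alpha$, which trivially upgrades to Ces\`aro convergence, so the real content is when $\alpha\in(\Ew,\infty)\setminus\cA$. In that case $\amin<\alpha<\amax$ bracket $\alpha$ in $\cA$ with $\cA\cap(\amin,\amax)=\varnothing$ by closedness of $\cA$, and $\mu^\alpha=t\mu^{\amin}+(1-t)\mu^{\amax}$ with $t=(\amax-\alpha)/(\amax-\amin)$. The goal is to show any subsequential weak limit of $\nu_N:=N^{-1}\sum_{k=0}^{N-1}\mathrm{Law}(A^k)$ equals this mixture.

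First I would establish tightness of the family $\{\nu_N\}$ on $\R_+^\Z$ with the product topology. Since $\Phi$ preserves the mean, as noted after \eqref{def:A}, each $A^k$ is stationary under $n$-shifts with nonnegative marginals of mean $\alpha$, so marginal tightness is immediate from Markov's inequality, and joint tightness follows. Let $\nu$ be any subsequential weak limit of $\nu_N$.

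Next I would argue that $\nu$ is a stationary, $\Phi$-invariant probability measure on $\R^\Z$, now allowing non-ergodic laws. Stationarity under $n$-shifts is inherited termwise. For $\Phi$-invariance, use the telescoping identity $\Phi(\nu_N)-\nu_N=N^{-1}(\mathrm{Law}(A^N)-\mathrm{Law}(A^0))$, whose right side tests to $0$ against any bounded continuous cylinder function; tightness lets one pass to the weak limit. Then apply ergodic decomposition under the $n$-shift: every ergodic component is itself $\Phi$-invariant and ergodic, hence by the uniqueness half of Theorem \ref{th:Pra-03} equals $\mu^\beta$ for some $\beta\in\cA$. This yields a representation $\nu=\int_\cA\mu^\beta\,d\lambda(\beta)$ for a probability measure $\lambda$ on $\cA$, and the mean constraint gives $\int\beta\,d\lambda(\beta)=\alpha$.

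The hardest step is to show $\mathrm{supp}\,\lambda\subset\{\amin,\amax\}$, after which the mean constraint alone forces $\lambda=t\delta_{\amin}+(1-t)\delta_{\amax}$ and identifies $\nu$ with $\mu^\alpha$. The strategy is to establish a stochastic sandwich $\mu^{\amin}\preceq\nu\preceq\mu^{\amax}$ in the coordinatewise order, which combined with the known monotonicity of $\alpha\mapsto\mu^\alpha$ forces $\mathrm{supp}\,\lambda\subset\cA\cap[\amin,\amax]=\{\amin,\amax\}$. The obstacle is that a generic mean-$\alpha$ arrival process $A^0$ does not admit a direct coordinatewise coupling with realizations from $\mu^{\amin}$ and $\mu^{\amax}$. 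The remedy is to run $\Phi$ on three systems sharing common service times $\{\S_{n,k}\}$ and exploit the monotonicity of the Lindley recursion \eqref{def:W}--\eqref{def:A} in the arrivals: decreasing arrivals increases waits and hence decreases subsequent interdeparture times, so $\Phi$ preserves the stochastic order. Averaging in $k$ before attempting the comparison is what allows the sandwich to hold at the Ces\`aro level even when it may fail instantaneously, which is also why Ces\`aro convergence, rather than plain weak convergence of $A^k$, is the correct mode for the conclusion.
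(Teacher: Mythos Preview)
The paper does not prove this theorem. It is quoted verbatim as Proposition~6.5 of Mairesse--Prabhakar \cite{Mai-Pra-03} and used as a black box; there is no proof in the paper to compare your attempt against.

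That said, a few points in your sketch would need tightening if you want it to stand on its own. Your telescoping step implicitly uses that $\Phi$ is affine on probability measures (which is true, since $A^1$ is a deterministic function of $(A^0,S^0)$ with $S^0$ independent of $A^0$), but passing $\Phi$ through the weak limit $\nu_{N_j}\to\nu$ requires a continuity property of $\Phi$ that you do not justify; the standard workaround is to take the weak limit of the full tuple $(A^k,S^k,W^k,A^{k+1})$ and show the limiting waiting time equals $\Psi(\wh A,\wh S)$, as Mairesse--Prabhakar do (cf.\ the paper's own use of this in the proof of Lemma~\ref{q:lm5}). Your final sandwich argument is also incomplete as written: you correctly identify the obstacle that $A^0$ need not be coordinatewise comparable with samples from $\mu^{\amin}$ or $\mu^{\amax}$, but the remedy you propose (``averaging in $k$ before attempting the comparison'') does not by itself produce the needed stochastic inequalities at the level of Ces\`aro limits without an additional coupling or attractor argument.
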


Based on the development in \cite{Mai-Pra-03} we deduce auxiliary facts.  

\hbox{}

\hbox{}
 

\begin{lemma}\noindent\label{q:lm5} 
\begin{enumerate}[\ \ \rm(a)]
\item\label{q:lm5:a}  Let $\amin< \amax$ be points in $\cA$ such that  $(\amin, \amax)\subset\cA^c$.  Then $\f$ is linear on the interval  $[\amin, \amax]$.  
\item\label{q:lm5:b}  Let $\xi\in\Diff$, $s=\xi\cdot e_1/\xi\cdot e_2$ and $\alpha=\gppa'(s)$.  Then $\alpha\in\cA$.  
\end{enumerate}
\end{lemma}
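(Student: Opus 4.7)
The plan is to prove \eqref{q:lm5:a} first via the mixture fixed point and then derive \eqref{q:lm5:b} as a consequence via Legendre duality. The main obstacle is a small extension of Lemma \ref{lm:EJ=f} to the non-ergodic mixture fixed point in part \eqref{q:lm5:a}; the rest is convex analysis.

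For part \eqref{q:lm5:a}, fix $\alpha=t\amin+(1-t)\amax$ with $t\in(0,1)$. By Theorem \ref{th:attractive}, the mixture $\mu^\alpha=t\mu^{\amin}+(1-t)\mu^{\amax}$ is a fixed point of $\Phi$ with mean $\alpha$. Linearity of expectation combined with Lemma \ref{lm:EJ=f} applied to each ergodic component gives
\[
E^{\mu^\alpha\otimes\P}[W_{0,0}+S_{0,0}]=tf(\amin)+(1-t)f(\amax).
\]
To conclude linearity of $f$ on $[\amin,\amax]$ it then suffices to identify this expectation with $f(\alpha)$. I would verify this extension of Lemma \ref{lm:EJ=f} to non-ergodic fixed points through the $\text{LPP}$-queueing dictionary: in any stationary system whose arrival process is a mean-$\alpha$ fixed point, stationarity in the station index $k$ together with Lemma \ref{t:lm-W_n} and the identity \eqref{eq:closed-loop:q} forces the expected departure time of customer $n$ from station $K-1$ to equal $n\alpha + K\,E^\nu[W_{0,0}+S_{0,0}]$, while the shape theorem \eqref{lln5} combined with the variational characterization \eqref{eq:flux} of $f$ pins this asymptotic down to $n\alpha+Kf(\alpha)$, independently of the choice of fixed point.

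For part \eqref{q:lm5:b}, argue by contradiction. Assume $\xi\in\Diff$ with $s=\xi\cdot e_1/\xi\cdot e_2$ but $\alpha=\gppa'(s)\notin\cA$. Since $\cA$ is closed in $(\Ew,\infty)$ with $\inf\cA=\Ew$ and $\sup\cA=\infty$, the point $\alpha$ lies in a bounded open gap $(\amin,\amax)$ with $\amin,\amax\in\cA$ and $(\amin,\amax)\cap\cA=\varnothing$. Part \eqref{q:lm5:a} then gives that $f$ is affine on $[\amin,\amax]$; call its slope $-s_*$. The convex/concave dual pair \eqref{eq:flux}, \eqref{eq:ga=f*} together with relation \eqref{eq:f=g-sa} forces this linear piece of $f$ to come from a corner of $\gppa$ at $s_*$ with $\gppa'(s_*+)=\amin$ and $\gppa'(s_*-)=\amax$. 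Monotonicity of $\gppa'$ inherited from concavity of $\gppa$ then implies $\gppa'(s'\pm)\ge\amax$ for $s'<s_*$ and $\gppa'(s'\pm)\le\amin$ for $s'>s_*$, so no one-sided derivative of $\gppa$ ever takes a value in $(\amin,\amax)$. This contradicts $\gppa'(s)=\alpha\in(\amin,\amax)$, and the contradiction proves $\alpha\in\cA$.

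The main obstacle is the extension of Lemma \ref{lm:EJ=f} needed in part \eqref{q:lm5:a}, since \cite{Mai-Pra-03} state the sojourn-time identity only for the ergodic fixed points $\mu^\alpha$ with $\alpha\in\cA$. Once that identification is in hand, part \eqref{q:lm5:a} is a one-line mixture calculation and part \eqref{q:lm5:b} is pure Legendre duality.
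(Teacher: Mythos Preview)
Your part \eqref{q:lm5:b} is the paper's argument, spelled out in more detail: the paper simply says ``basic convex analysis implies that $\gppa$ has multiple tangent slopes at $s$,'' and your Legendre-duality paragraph makes this explicit.

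In part \eqref{q:lm5:a} your direct approach is circular, and the sketch you offer to close the gap does not work. You correctly get $E^{\mu^\alpha\otimes\P}[W_{0,0}+S_{0,0}]=tf(\amin)+(1-t)f(\amax)$ from the ergodic decomposition, and you correctly flag that identifying this with $f(\alpha)$ is the whole problem. But the LPP argument you propose cannot give $f(\alpha)$ for the mixture: the departure-time asymptotic you describe depends on the almost-sure linear growth rate of the arrival process, and under $\mu^\alpha=t\mu^{\amin}+(1-t)\mu^{\amax}$ that rate is not the constant $\alpha$ but the random variable equal to $\amin$ or $\amax$ according to the component. Running your shape-theorem computation on each component and then averaging just returns $tf(\amin)+(1-t)f(\amax)$. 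So ``$E[W+S]=f(\alpha)$ for any mean-$\alpha$ fixed point'' is equivalent to the linearity you are trying to prove.

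The paper breaks the circularity differently. It starts from an \emph{ergodic} arrival process of mean $\alpha$, where the law of large numbers of \cite[Thm.~4.1]{Mai-Pra-03} legitimately gives $n^{-1}\sum_{k<n}W_{0,k}\to f(\alpha)-\Ew$ almost surely. It then passes to a Ces\`aro weak limit $(\hat A,\hat S,\widetilde W)$; by Theorem \ref{th:attractive} the marginal of $\hat A$ is the mixture, so $E[\widetilde W_0]=t(f(\amin)-\Ew)+(1-t)(f(\amax)-\Ew)$ exactly as you found. The key extra step is that weak convergence together with truncation and the concavity of $x\mapsto x\wedge c$ yields the \emph{one-sided} bound $E[\widetilde W_0]\le f(\alpha)-\Ew$. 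This inequality points in the opposite direction from convexity of $f$, so together they force equality and hence linearity. It is this inequality-plus-convexity maneuver, not a direct identification of $E[W+S]$ with $f(\alpha)$, that carries the argument.
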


\begin{proof}  Part \eqref{q:lm5:a}.   Let $0<t<1$ and  $\alpha= t\amin+(1-t)\amax$.  In the notation of \cite{Mai-Pra-03}, consider a sequence of tandem queues
$(\A^k, \S^k, \W^k, \A^{k+1})_{k\in\Z_+}$ where the initial arrival process  $\A^0=\{A_{n,0}\}_{n\in\Z}$ is ergodic with mean $E(A_{n,0})=\alpha$,  the service times   $\{S^k\}_{k\in\Z_+}=\{\S_{n,k}\}_{n\in\Z, k\in\Z_+}$ are independent of $\A^0$ and i.i.d.\ $\P$-distributed, and the remaining variables are defined iteratively.    Let $(\wh\A, \wh\S, \wt\W, \wt\D)$ denote a weak limit point of the Ces\`aro averages of the distributions of $(\A^k, \S^k, \W^k, \A^{k+1})$.  (The tightness argument is on p.~2225 of \cite{Mai-Pra-03}.)   Then, as shown in \cite[eqn.~(29)]{Mai-Pra-03} in the course of the proof of their Theorem 5.1,   $\wt\W=\Psi(\wh\A, \wh\S)$ where the mapping  $\Psi$ encodes definition \eqref{def:W}.   By Theorem \ref{th:attractive}   \cite[Prop.~6.5]{Mai-Pra-03}    the distribution of $\wh\A$ is  $t\mu^{\amin}+(1-t)\mu^{\amax}$.   By \cite[Theorem 4.1]{Mai-Pra-03}, 
\be\label{MP-lln5} n^{-1}\sum_{k=0}^{n-1} \W_{0,k} \to M(\alpha)\equiv  \f(\alpha)-\Ew
\qquad\text{almost surely.}\ee
  Combine these facts as follows.   First  
\begin{align*}
E(\wt\W_0)&=E[\Psi(\wh\A, \wh\S)_0]= t E^{\mu^{\amin}\otimes\P}[\Psi(\wh\A, \wh\S)_0] + (1-t) E^{\mu^{\amax}\otimes\P}[\Psi(\wh\A, \wh\S)_0]\\
  &=   tM(\amin)+(1-t)M(\amax)
\end{align*}
where the last equality comes from \cite[Lemma 6.3(b)]{Mai-Pra-03} restated as Lemma \ref{lm:EJ=f} above.    The weak limit, combined with the law of large numbers \eqref{MP-lln5}
and dominated convergence, gives, for any $c<\infty$ and along a subsequence, 
\begin{align*}
E(\wt\W_0\wedge c)&= \lim_{n\to\infty}  n^{-1}\sum_{k=0}^{n-1} E(\W_{0,k}\wedge c) \le  \lim_{n\to\infty}  E\Bigl[\,\Bigl(  n^{-1}\sum_{k=0}^{n-1} \W_{0,k}\Bigr) \wedge c\Bigr] = M(\alpha)\wedge c\\
&\le M(\alpha).  
\end{align*}  
Letting $c\nearrow\infty$ gives 
\[    tM(\amin)+(1-t)M(\amax) \le  M(\alpha).   \]
Since $M$ is convex and $\f$ differs from $M$ by a constant, this implies 	
$\f(\alpha)=  t\f(\amin)+(1-t)\f(\amax)$ and completes the proof of part \eqref{q:lm5:a}. 	

\smallskip

Part \eqref{q:lm5:b}.    If  $\alpha\in\cA^c$,  there exist $\amin< \amax$   in $\cA$ such that  $\alpha\in (\amin, \amax)\subset\cA^c$.    By part \eqref{q:lm5:a}  $f$ is linear on  $[\amin, \amax]$.  Basic convex analysis   implies that $\gppa$ has multiple tangent slopes at $s$ and hence cannot be differentiable  at $s$. 
%
\qed
\end{proof}

	Given $\alpha>\Ew$,  let $\{\A_{n,0}\}_{n\in\Z}$ $\sim$ 
	  $\mu^\alpha$ and i.i.d.\ $\{\S_{n,k}\}_{n\in\Z,k\in\Z_+}$ 
	$\sim$  $\P$  be independent. 
	Define $\{\W_{n,k},\A_{n,k+1}\}_{n\in\Z,k\in\Z_+}$ using \eqref{def:Wk} 
	and \eqref{eq:ne-induction:q}.
	Because $\Phi(\mu^\alpha)=\mu^\alpha$,  
	$\{\A_{n,k},\S_{n,k},\W_{n,k}\}_{n\in\Z,k\in\Z_+}$ is stationary in $n$ and  $k$.
	Extend this to the   stationary process $(\A, \S, \W)=\{\A_{n,k},\S_{n,k},\W_{n,k}\}_{n,k\in\Z}$ indexed by  $\Z^2$.    
Define  another 	$\Z^2$-indexed stationary process $(\Ap, \Sp, \Wp)$ by  
\be\label{q:til}  ( \Ap_{i,j},\Sp_{i,j}, \Wp_{i,j}) = 
(\W_{j-1,i+1}+\S_{j-1,i+1} , \, \S_{j,i}, \, \A_{j-1,i+1}-\S_{j,i} ), \quad (i,j)\in\Z^2.  \ee

\begin{lemma}\label{q:erg-lm} 
Suppose  $\alpha\in\cA$.  Then     the process $(\A, \S, \W)$ is ergodic   under   translation $T_{e_1}$,  and also ergodic under  $T_{e_2}$.   Furthermore,  $\f(\alpha)\in\cA$.   $(\Ap, \Sp, \Wp)$ is a stationary queueing system where $\{\Ap_{n,0}\}_{n\in\Z}$ has distribution  $\mu^{\f(\alpha)}$, and is also ergodic under both $T_{e_1}$  and    $T_{e_2}$. 
\end{lemma}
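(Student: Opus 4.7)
The strategy is to establish the two separate ergodicities of $(\A,\S,\W)$ first, and then derive the properties of the primed system from them by transformation. Recall the construction: $A^0\sim\mu^\alpha$ is independent of the i.i.d.\ array $\S=\{\S_{n,k}\}$, and $\{\A^k,\W^k\}_{k\ge 0}$ are obtained from $(A^0,\S)$ by the deterministic recursions \eqref{def:Wk} and \eqref{eq:ne-induction:q}; the extension to $k<0$ is supplied by $k$-stationarity of the fixed point $\mu^\alpha$.

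For ergodicity under $T_{e_1}$ (shift in $n$), the input pair $(A^0,\S)$ is the independent product of a $T_{e_1}$-ergodic process ($\mu^\alpha\in\M_e^\alpha$) with a $T_{e_1}$-mixing one (i.i.d.), hence is itself $T_{e_1}$-ergodic. For every finite $K$, the restriction of $(\A,\S,\W)$ to $0\le k\le K$ is a measurable, $T_{e_1}$-equivariant image of $(A^0,\S)$, so it inherits $T_{e_1}$-ergodicity. Any $T_{e_1}$-invariant event $A$ for the bi-infinite process satisfies $E[1_A\mid\cF_M]=\P(A)$ for the $\sigma$-algebra $\cF_M$ of the $k\ge -M$ restriction (by $T_{e_1}$-ergodicity of that restriction combined with $T_{e_1}$-invariance of $\cF_M$), and martingale convergence as $M\to\infty$ forces $\P(A)\in\{0,1\}$.

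The main obstacle is $T_{e_2}$-ergodicity (shift in $k$). I will view $\{(\A^k,\S^k)\}_{k\in\Z}$ as a stationary Markov chain on $\R^\Z\times\R^\Z$: given $(\A^k,\S^k)$, the next state is $(\Phi\text{-image of }(\A^k,\S^k),\ S^{k+1})$ with $S^{k+1}\sim\P^{\otimes\Z}$ fresh. Its invariant laws are exactly $\nu\otimes\P^{\otimes\Z}$ with $\nu$ a $\Phi$-invariant probability measure on $\R^\Z$, and by the standard correspondence between Markov-chain ergodicity and extremality, $T_{e_2}$-ergodicity of the chain is equivalent to extremality of $\mu^\alpha$ in the convex set of $\Phi$-invariant probability measures. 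To verify this, suppose $\mu^\alpha=t\nu_1+(1-t)\nu_2$ with $t\in(0,1)$ and $\nu_i$ both $\Phi$-invariant, and decompose each $\nu_i$ ergodically under $T_{e_1}$ as $\nu_i=\int\eta_{i,\omega}\,d\lambda_i(\omega)$. Because $\Phi$ commutes with $T_{e_1}$ and maps $T_{e_1}$-ergodic measures to $T_{e_1}$-ergodic measures, uniqueness of the $T_{e_1}$-ergodic decomposition forces $\Phi\eta_{i,\omega}=\eta_{i,\omega}$ $\lambda_i$-a.s., so each $\eta_{i,\omega}$ is $\Phi$-invariant and $T_{e_1}$-ergodic; by Theorem \ref{th:Pra-03} it equals $\mu^{\beta_{i,\omega}}$ for $\beta_{i,\omega}$ its mean. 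This exhibits $\mu^\alpha$ as a mixture of the distinct $T_{e_1}$-ergodic measures $\{\mu^\beta\}_{\beta\in\cA}$; uniqueness of the $T_{e_1}$-ergodic decomposition of the $T_{e_1}$-ergodic $\mu^\alpha$ then forces the combined mixing measure to be $\delta_\alpha$, whence $\nu_1=\nu_2=\mu^\alpha$. Extremality in hand, the Markov chain is $T_{e_2}$-ergodic; since $\W^k$ is a measurable function of $(\A^k,\S^k)$ via \eqref{def:Wk}, $T_{e_2}$-ergodicity passes to the full $(\A,\S,\W)$.

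With both ergodicities established, the primed statements follow by direct computation. A line-by-line substitution of \eqref{q:til} into the analogue of \eqref{eq:ne-induction:q} with $(n,k)$ replaced by $(i,j)$ reduces, after algebra using the original recursions, to an identity, so $(\Ap,\Sp,\Wp)$ is a bona fide tandem queueing system. The primed services $\Sp_{i,j}=\S_{j,i}$ form an i.i.d.\ array independent of the primed initial arrivals $\Ap^0=\{\W_{-1,i+1}+\S_{-1,i+1}\}_{i\in\Z}$, since the $\S$-indices used by $\Ap^0$ (those with first coordinate $\le -1$) are disjoint from those of $\Sp$ and also independent of $A^0$. By $n$-stationarity of $(\A,\S,\W)$, the distribution of $\Ap^j$ is $j$-independent and is therefore a $\Phi$-fixed point; by Lemma \ref{lm:EJ=f} its mean is $\f(\alpha)$, and ergodicity under the primed customer shift (which coincides with $T_{e_2}$ on the original) has just been established. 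Thus $\Ap^0\in\M_e^{\f(\alpha)}$ is a $\Phi$-fixed point, so $\f(\alpha)\in\cA$ and $\Ap^0\sim\mu^{\f(\alpha)}$. Finally, the primed $T_{e_1}$ and $T_{e_2}$ shifts correspond under \eqref{q:til} to the original's $T_{e_2}$ and $T_{e_1}$ shifts, so separate ergodicity of $(\Ap,\Sp,\Wp)$ follows immediately from the already proved properties of $(\A,\S,\W)$.
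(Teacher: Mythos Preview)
Your $T_{e_1}$-ergodicity argument matches the paper's: ergodicity of $(A^0,\S)$ passes to each half-space $k\ge\ell$, and the martingale argument extends it to the bi-infinite system.

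The extremality route to $T_{e_2}$-ergodicity has a real gap. The invariant measures of your Markov chain are the laws $\nu\otimes\P^{\otimes\Z}$ with $\nu$ a \emph{pathwise} fixed point of the queueing map --- and nothing forces such $\nu$ to be $T_{e_1}$-invariant. So in the decomposition $\mu^\alpha=t\nu_1+(1-t)\nu_2$ the $\nu_i$ need not be $n$-stationary, and you cannot take their $T_{e_1}$-ergodic decomposition. If they \emph{were} $T_{e_1}$-invariant the conclusion would be immediate (an ergodic measure is extremal among invariant ones, so $\nu_1=\nu_2=\mu^\alpha$ with no use of $\Phi$ at all), but extremality of $\mu^\alpha$ in the smaller set of $T_{e_1}$-invariant $\Phi$-fixed points does not yield extremality in the full set, which is what the Markov-chain criterion requires. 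Even granting $T_{e_1}$-invariance, the step ``uniqueness of the ergodic decomposition forces $\Phi\eta_{i,\omega}=\eta_{i,\omega}$'' only shows $\Phi$ preserves the mixing measure on components, not that it fixes components pointwise.

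There is a second gap in the treatment of the primed system. Substituting \eqref{q:til} into \eqref{eq:ne-induction:q} verifies Lindley's \emph{recursion} $\Wp_{i+1,j}=(\Wp_{i,j}+\Sp_{i,j}-\Ap_{i,j})^+$ for $\Wp$, but not the closed form \eqref{def:Wk}. Since $\Phi$ is defined via \eqref{def:W}, you need $\Wp$ to be the minimal nonnegative solution (equivalently, $\Wp_{\ell,j}=0$ for some $\ell$) before ``$\Ap^{j+1}$ has the same law as $\Ap^j$'' can be read as $\Phi$-invariance. The paper proves this (item (iii)) by a contradiction argument: if $\Wp_{i,j}>0$ for all $i\le k$, iterating the recursion and applying the LLN~\eqref{MP-lln5} yields $0=\Ew-\f(\alpha)<0$.

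The paper's route reverses your order of dependence: it first establishes that $(\Ap,\Sp,\Wp)$ is a genuine queueing system (with the Lindley identity just described), uses the pathwise limit \eqref{q:707} together with Prop.~4.4 of \cite{Mai-Pra-03} to conclude that $\{\Ap_{n,0}\}$ is $T_{e_1}$-ergodic --- hence $\f(\alpha)\in\cA$ --- and only then obtains $T_{e_2}$-ergodicity of $(\A,\S,\W)$ by applying the already-proved $T_{e_1}$ argument to the transposed system. This bypasses the extremality question entirely.
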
 

\begin{proof}
  The queueing construction gives $T_{e_1}$-ergodicity of $\{\A_{n,k},\S_{n,k},\W_{n,k}\}_{n\in\Z, \,k\ge\ell}$ for any $\ell\in\Z$.   
Let $B$ be a $T_{e_1}$-invariant event  of the full process $\{\A_{n,k},\S_{n,k},\W_{n,k}\}_{n, k\in\Z}$.    Write $\cG_\ell$ for the $\sigma$-algebra generated by $\{\A_{n,k},\S_{n,k},\W_{n,k}\}_{n\in\Z, k\ge\ell}$.   The conditional expectations 
   $E(\one_B\vert\cG_\ell)$  are $T_{e_1}$-invariant, hence a.s.\ constant by the ergodicity proved thus far.   $E(\one_B\vert\cG_\ell)\to\one_B$  almost surely as $\ell\to-\infty$, and  consequently $\one_B$ is a.s.\ constant.   This completes the proof of ergodicity under $T_{e_1}$.   

To get ergodicity under $T_{e_2}$ we transpose, and that leads us to look at $( \Ap, \Sp,\Wp)$ of \eqref{q:til}. 
To see that   $( \Ap, \Sp,\Wp)$ is another queueing system with the same i.i.d.\ service time distribution  $\Sp_{i,j}=\S_{j,i}$, we need to check three items.

(i)  Independence of $\{\Ap_{i,\ell}\}_{i\in\Z}$ and $\{\Sp_{i,j}\}_{i\in\Z, j\ge \ell}$, for each $\ell\in\Z$.  This follows from the structure of equations \eqref{eq:ne-induction:q} and the independence of the $\{S_{i,j}\}$.  

(ii)  $\Ap_{i, j+1}=(\Wp_{ij}+\Sp_{ij} -\Ap_{ij}  )^- +\Sp_{i+1, j} $.  
This follows from the top equation of \eqref{eq:ne-induction:q}.  

(iii)  The third point needed is
 \be\label{q:700} \Wp_{k+1,j}=\Bigl( \;\sup_{n:\,n\le k}\, \sum_{i=n}^{k} (\Sp_{ij}-\Ap_{ij})  \Bigr)^+. \ee This needs a short argument.  Fix $k,j$.   The middle equation of \eqref{eq:ne-induction:q} gives 
\be\label{q:701}  \Wp_{ij}=(\Wp_{i-1,j} +\Sp_{i-1, j} -\Ap_{i-1, j} )^+  \ee
which  can be iterated to give  
\[  \Wp_{k+1,j}=\Bigl(  \Bigl\{\Wp_{\ell j} +  \sum_{i=\ell}^{k} (\Sp_{ij}-\Ap_{ij})  \Bigr\}  \;\vee\; 
\Bigl\{ \max_{n:\ell<n\le k}  \sum_{i=n}^{k} (\Sp_{ij}-\Ap_{ij})  \Bigr\} \,  \Bigr)^+  
\qquad \text{for $\ell\le k$.}  \]
Thus \eqref{q:700} follows if $\Wp_{\ell j}=0$ for some $\ell\le k$.  Suppose on the contrary that  $\Wp_{ij}>0$ for all $i\le k$.   Apply \eqref{q:701}  to all $\Wp_{ij}$ for $n<i\le k$ and divide by $\abs n$ to get   
\[   \frac{\Wp_{kj}}{\abs n}= \frac{\Wp_{nj}}{\abs n} +  \frac1{\abs n}\sum_{i=n}^{k-1} (\Sp_{ij}-\Ap_{ij})  \]
which is the same as 
 \be\label{q:705}  \frac{\A_{j-1,k+1}}{\abs n} -  \frac{\S_{jk}}{\abs n}  \; = \;  \frac{\A_{j-1,n+1}}{\abs n} -  \frac{\S_{jn}}{\abs n} +  \frac1{\abs n}\sum_{i=n}^{k-1} (\S_{ji}-\W_{j-1,i+1}-\S_{j-1,i+1}) .  \ee
Let $n\to-\infty$.   The i.i.d.\ property of the $\{\S_{ij}\}$ and  Theorem 4.1 of  \cite{Mai-Pra-03} quoted as \eqref{MP-lln5} above,  combined with \eqref{q:500} from above,  give  the  limit  in probability 
 \be\label{q:707}   \lim_{n\to-\infty}  \frac1{\abs n}\sum_{i=n}^{k-1}  \Ap_{ij} =     \lim_{n\to-\infty}  \frac1{\abs n}\sum_{i=n}^{k-1} (\W_{j-1,i+1}+\S_{j-1,i+1}) =  f(\alpha). \ee
 The four leftmost  terms  of \eqref{q:705}  vanish as $n\to-\infty$ (by stationarity and finite expectations). 
Hence  letting $n\to-\infty$ in \eqref{q:705} along  a suitable subsequence  leads to 
$0=\Ew-f(\alpha)<0$  (the last inequality from Lemma \ref{lm:f-properties}). 
This contradiction  verifies \eqref{q:700}. 

At this point we have shown  that  the stationary process $\{\Ap_{n,0}\}_{n\in\Z}$ is a fixed point for $\Phi$ with the deterministic  pathwise limit \eqref{q:707}.  By Prop.~4.4 of  \cite{Mai-Pra-03} the process $\{\Ap_{n,0}\}_{n\in\Z}$ must be ergodic. 
%
%
  We have shown that $f(\alpha)\in\cA$.   The   part of the lemma already  proved gives the ergodicity of
the process  
\[  \{ \Ap_{ij},\Sp_{ij}, \Wp_{ij}\} = 
\{ \W_{j-1,i+1}+\S_{j-1,i+1} , \, \S_{j,i}, \, \A_{j-1,i+1}-\S_{j,i} \}  \]
under translations of the index $i$.  Since ergodicity is preserved by mappings that respect translations,  a suitable mapping of the right-hand side  above gives the $T_{e_2}$-ergodicity of $\{ \A_{nk},\S_{nk}, \W_{nk}\}$. 
\qed
\end{proof} 

\begin{proofof}{of Theorem \ref{th:cocycles}}
We begin by constructing a convenient 
\label{A_0} 
   countable subset $\cA_0$ of $\cA$.   Let $\Uset_0$ be a countable dense subset of $\ri\Uset$ such that $\Uset_0$ contains 
 \begin{itemize}
 \item[(i)] all (at most countably many)  points of nondifferentiability of $\gpp$, 
 \item[(ii)]  all endpoints of nondegenerate intervals on which $\gpp$ is linear (recall \eqref{gg-1}),    and 
\item[(iii)] 
    a  countable dense subset of points of differentiability of $\gpp$.  
 \end{itemize}     
     Then put 
 \be\label{q:A_0} \cA_0=\bigl\{ \gppa'(s\pm):   \bigl(\tfrac{s}{1+s}, \tfrac{1}{1+s}\bigr)\in\Uset_0\bigr\}. \ee    $\cA_0\subset\cA$ by virtue of Lemma \ref{q:lm5}\eqref{q:lm5:b} and the closedness of $\cA$ in $(\Ew,\infty)$.  
  

We construct a coupled 
process  \[\{(\A^\alpha, \S,  \W^\alpha): \alpha\in\cA_0\}= \{ \A^\alpha_{n,k}, \S_{n,k}, \W^\alpha_{n,k}:  (n,k)\in\Z^2, \alpha\in\cA_0\}\] 
whose 
 distribution  $\wb P$ on $\R^{\cA_0\times\Z^2}\times\R^{\Z^2}\times\R^{\cA_0\times\Z^2}$  is invariant under translations of the $(n,k)$ index over $\Z^2$  and that has    the following properties.   For each $(n,k)\in\Z^2$ we have the inequalities 
\be \label{q:ineq13}     \A^\alpha_{n,k}\le \A^\beta_{n,k}   \quad\text{and}\quad 
     \W^\alpha_{n,k}\ge \W^\beta_{n,k}  \quad\text{ for $\alpha<\beta$}.    
 \ee 
 For each  $\alpha\in\cA_0$ the marginal process  $(\A^\alpha, \S,  \W^\alpha)$ is a stationary queueing process of the type described in  Lemma \ref{q:erg-lm},  stationary and ergodic under both translations,  with $\{ \A_{n,0}^\alpha\}_{n\in\Z}\sim\mu^\alpha$ and    $\{\S_{0,k}+\W_{0,k}^\alpha\}_{k\in\Z}\sim\mu^{\f(\alpha)}$.   $\wb P$ comes  from a subsequential  Ces\`aro limit of queueing processes followed by Kolmogorov extension to the full lattice. 
  
  Start by taking for each $\alpha\in\cA_0$ the  constant   initial inter-arrival process   $\A^\alpha_{n,0}=\alpha$.  Use the iterative equations to construct 
   $(\A^{\alpha, k}, \S^k, \W^{\alpha, k})= \{ \A^\alpha_{n,k}, \S_{n,k}, \W^\alpha_{n,k}\}_{n\in\Z}$ for $k\in\Z_+$.  Use the same version of the service times $\{S_{n,k}\}$ for each $\alpha\in\cA_0$.   Inequalities  \eqref{q:ineq13}     are true for the $A$-processes at $k=0$ by construction.  They are propagated for all $k\in\Z_+$ by equations \eqref{def:Wk} and \eqref{eq:ne-induction:q}.  
   
 Let $Q^\ell$ be the joint  distribution of 
  $\{(\A^{\alpha, \ell+k}, \S^{\ell+k}, \W^{\alpha, \ell+k}): \alpha\in\cA_0, k\ge0\}$, a probability measure on the countable product space $\R^{\cA_0\times\Z\times\Z_+}\times\R^{\Z\times\Z_+}\times\R^{\cA_0\times\Z\times\Z_+}$.   By the development in Sections 5 and 6 of \cite{Mai-Pra-03},  sequence  $\{Q^\ell\}$ is tight and for each $\alpha\in\cA_0$,    $\A^{\alpha, k}$ converges weakly to $\mu^\alpha$ as $k\to\infty$ and   the   process  $(\A^{\alpha, \ell+k}, \S^{\ell+k}, \W^{\alpha, \ell+k})_{k\ge0}$  converges weakly to a stationary queueing process as $\ell\to\infty$.  In order to get a limit where the joint distribution between different $\alpha$-values is also invariant  under shifts of the $k$-index, we perform one averaging:  let $\wb Q^n=n^{-1}\sum_{\ell=0}^{n-1} Q^\ell$.    The  effect of shifting the $k$-index is 
  $\wb Q^n\circ T_{e_2}^{-1}=n^{-1}\sum_{\ell=1}^{n} Q^\ell$.   Thus any limit point   $\wb Q$  of $\{ \wb Q^n\}$  is  invariant under shifts of the $k$-index.    This  invariance  extends  $\wb Q$ to negative $k$-values and gives  an invariant  measure $\wb P$  indexed by $(n,k)\in\Z^2$.   The almost sure  inequalities  \eqref{q:ineq13}   are preserved in this construction.   $\wb P$ has the   properties described below \eqref{q:ineq13}.

Define the following mapping from the coordinates $\{(\A^\alpha, \S, \W^\alpha): \alpha\in\cA_0\}$ to the coordinates   $\{ (\w_x)_{x\in\Z^2},  (\w^{i,\alpha}_x)_{i\in\{1,2\}, \,\alpha\in\cA_0, \, x\in\Z^2}\}$ of the space 
 $\Ombig=\Omega\times \R^{\{1,2\}\times\cA_0\times\Z^2}$:   for $(n,k)\in\Z^2$ and  $\alpha\in\cA_0$, 
 \begin{align}\label{q:map} 
( \om_{n,k}, \, \w^{1,\alpha}_{n,k}, \, \w^{2,\alpha}_{n,k})
=  (\S_{-n,-k},\,\A^\alpha_{-n-1,-k+1},\,\W^\alpha_{-n,-k}+\S_{-n,-k})  . 
 \end{align}
Let $\Pbig$ be the distribution induced on $\Ombig$ by this mapping, from the joint distribution $\wb P$  of the coupled stationary queueing processes.  

The probability space $\OBPbig$ of Theorem \ref{th:cocycles}  has now been constructed.  
For $\xi\in\Uset_0$ and $i=1,2$   define the functions   $B^\xi_{\pm}(\what, x,x+e_i)$ as the following coordinate projections: 
\be\label{q:B3}     B^\xi_{\pm}(\what,x,x+e_i)= \w^{i,\gppa'(s\pm)}_x \qquad\text{for $s=\xi\cdot e_1/\xi\cdot e_2$.}   
\ee
The set $\cA_0$ was constructed to ensure  $\gppa'(s\pm)\in\cA_0$ for each $\xi\in\Uset_0$ so these functions  are well-defined.   For each $\xi\in\Uset_0$ and $x,y\in\Z^d$, follow the same deterministic procedure to extend these functions to $B^\xi_{\pm}(\what,x,y)$ for all $x,y\in\Z^d$.  For example, through these steps:

$\bf\bullet$  Set  $B^\xi_{\pm}(\what,x,x)=0$.   For $x\le y$ fix a path $x=x_0,x_1,\dotsc,x_m=y$ such that all $e_1$ steps come before $e_2$ steps and set $B^\xi_{\pm}(\what,x,y)=\sum_{i=0}^{m-1}  B^\xi_{\pm}(\what, x_i,x_{i+1})$.   If $y\le x$,   set $B^\xi_{\pm}(\what,x,y)=-B^\xi_{\pm}(\what,y,x)$.

$\bf\bullet$   If there is no path between $x$ and $y$, let $z=(x\cdot e_1\wedge y\cdot e_1\,, \, x\cdot e_2\wedge y\cdot e_2)$  and   set $B^\xi_{\pm}(\what, x,y)=B^\xi_{\pm}(\what, x,z)+B^\xi_{\pm}(\what,z, y)$.

\smallskip 

The remainder  of the proof of Theorem \ref{th:cocycles} consists of two steps: (a) verification that the processes 	$B^\xi_{\pm}(x,y)$  thus defined  for $\xi\in\Uset_0$ satisfy all the properties required by Theorem \ref{th:cocycles} and  (b) definition of  $B^\xi_{\pm}(x,y)$ for {\sl all} $\xi\in\ri\Uset$  through right  limits followed by  another verification of the required properties.    

\medskip

In part \eqref{th:cocycles:indep} of Theorem \ref{th:cocycles} the   stationarity and ergodicity of each process 
\[\psi^{\pm, \xi}_x(\what)=(\w_x, B^\xi_{\pm}(x,x+e_1), B^\xi_{\pm}(x,x+e_2))\]  under both translations $T_{e_1}$ and $T_{e_2}$  are a consequence  of Lemma \ref{q:erg-lm}. 
The independence claim follows from the   queuing construction.  For any given $(n,k)$ and  $m<-k$,  $\A^\alpha_{-n-1,-k}$  is  
a function of $(\{ \A^\alpha_{i,m}\}_{ i\le -n-1}, \{\S_{i,j}\}_{ i\le -n, \, m\le j\le -k-1})$ and 
$\W^\alpha_{-n-1,-k}$  is  
a function of $(\{ \A^\alpha_{i,m}\}_{ i\le -n-2},$  $ \{\S_{i,j}\}_{ i\le -n-1, \, m\le j\le -k})$.   
These inputs are independent of $\S_{-n,-k}$. 


Part \eqref{th:cocycles:exist} of Theorem \ref{th:cocycles} requires the cocycle properties. 
For   a given   $\xi\in\Uset_0$, conservation law \eqref{eq:closed-loop:q} 
translates into the  $\Pbig$-almost sure property 
\[	B^\xi_{\pm}(x,x+e_2) + B^\xi_{\pm}(x+e_2,x+e_1+e_2) =  B^\xi_{\pm}(x,x+e_1) + B^\xi_{\pm}(x+e_1,x+e_1+e_2). 	  \]	
  Thus     $\{B^\xi_{\pm}(x,y)\}_{x,y\in\Z^2}$ is  additive.   Stationarity came in the previous paragraph and integrability is the integrability of the $\A$, $\S$ and $\W$ variables.  

 The mean vectors satisfy 
\begin{align*}
h_\pm(\xi) &=  -\, \bigl( \,\Ebig[B^\xi_{\pm}(0,e_1)]\,, \, \Ebig[B^\xi_{\pm}(0, e_2)]\, \bigr) 
=  - \bigl( E[A^{\gppa'(s\pm)}_{0,0}],  E[W^{\gppa'(s\pm)}_{0,0}+S_{0,0}]\,\bigr)\\
&= -\bigl(   \gppa'(s\pm) , \f(\gppa'(s\pm))\bigr)  = -\nabla \gpp(\xi\pm).  
\end{align*} 
The fact that one-sided gradients satisfy the duality \eqref{eq:duality} is basic convex analysis. 

The bottom equation of \eqref{eq:ne-induction:q}  translates into the potential-recovery property 
\[\w_x= B^\xi_{\pm}(x,x+e_1) \wedge  B^\xi_{\pm}(x,x+e_2) \qquad \text{$\Pbig$-a.s.}   \]
Part \eqref{th:cocycles:exist} of Theorem \ref{th:cocycles} has been verified for $B^\xi_{\pm}(\what,x,y)$ for $\xi\in\Uset_0$. 

Part \eqref{th:cocycles:flat} of Theorem \ref{th:cocycles} is the equality of   cocycles that share the mean vector.  This is clear from  \eqref{q:B3}  and  the construction   because $h_\pm(\xi)$ determines $\gppa'(s\pm)$.  

For the inequalities of part \eqref{th:cocycles:cont},  let  $s=\xi\cdot e_1/\xi\cdot e_2$  and $t=\zeta\cdot e_1/\zeta\cdot e_2$ for $\xi, \zeta\in\Uset_0$.   Then 
 $\xi\cdot e_1<\zeta\cdot e_1$ implies $s<t$. By concavity $\gppa'(s-)\ge \gppa'(s+)\ge \gppa'(t-)$    and the first inequality of \eqref{q:ineq13} gives 
 $A^{\gppa'(s-)}_{n,k}\ge A^{\gppa'(s+)}_{n,k}\ge A^{\gppa'(t-)}_{n,k}$ which translates into the first inequality of \eqref{eq:monotone}.   Similarly the second 
 inequality of \eqref{q:ineq13} gives  the second  inequality of \eqref{eq:monotone}.   
   Let  $\Ombig_1$ be  the event on which  
   inequalities \eqref{eq:monotone} hold for all countably many  $\xi, \zeta\in\Uset_0$.  
   
For $\zeta\in\Uset_0$ define 
$Y(\what, \zeta, x,x +e_1)=\sup_{\xi\in\Uset_0: \,\xi\cdot e_1>\zeta\cdot e_1} B^\xi_{\pm}(x,x+e_1)$.   Then  for any sequence $\xi_n\in\Uset_0$ such that 
   $\xi_n\cdot e_1\searrow\zeta\cdot e_1$, by monotonicity  
\be\label{q:Blim}  \lim_{n\to\infty}B^{\xi_n}_{\pm}(x,x+e_1)  = Y(\what, \zeta, x,x +e_1) \le  B^\zeta_{+}(x,x+e_1)   \quad\text{for all $\what\in\Ombig_1$.}  
\ee
Monotonicity of the family of cocycles gives a bound that justifies dominated convergence, and hence 
\be\label{B-mon-6}  \Ebig\bigl[ \, \lim_{n\to\infty}  B^{\xi_n}_{\pm}(x,x+e_1)\bigr] =    \lim_{n\to\infty}  \gppa'(s_n\pm) =  \gppa'(t+)  =   \Ebig\bigl[  B^\zeta_{+}(x,x+e_1)\bigr] .
\ee
Equality of expectations forces a.s.\ equality in \eqref{q:Blim}.  We now have a.s.\ right continuity \eqref{eq:cont} for the case $(x,y)=(x,x+e_1)$. Analogously deduce a.s.\  right continuity \eqref{eq:cont} for  $(x,y)=(x,x+e_2)$, and  a.s.\ left continuity \eqref{eq:cont-left}  for  $(x,y)=(x,x+e_i)$.   Then a.s.\ left and right  continuity  follow for all $(x,y)$ by the construction of  $B^\xi_{\pm}(x,y)$  in terms of the nearest-neighbor values $B^\xi_{\pm}(x,x+e_i)$.  

Let $\Ombig_0$ be the full $\Pbig$-measure subset of $\Ombig_1$  on which   limits \eqref{eq:cont}  and  \eqref{eq:cont-left}  hold for all  $ \zeta\in\Uset_0$ when $\xi_n\to\zeta$ in $\Uset_0$.   
  
\medskip 
 
 Theorem \ref{th:cocycles}  has now been verified for  $B^\xi_{\pm}$   for $\xi\in\Uset_0$.   The next step  defines $B^\zeta_{\pm}$ for   $\zeta\in(\ri\Uset)\smallsetminus\Uset_0$. Since all points of nondifferentiability of $\gpp$ were included in $\Uset_0$,  $\zeta\in\Diff$.  Then  we define $B^\zeta_{\pm}$ as equal and denote it by $B^\zeta$.   We choose  right limits for the definition.  So for   $\zeta\in(\ri\Uset)\smallsetminus\Uset_0$ set  
\be\label{q:Blim3} \begin{aligned}  B^\zeta(\what, x,x+e_1) =B^\zeta_\pm(\what,x,x+e_1) =  \sup_{\xi\in\Uset_0 \,: \,  \xi\cdot e_1>\zeta\cdot e_1}B^\xi_\pm(\what,x,x+e_1 ) \\   
 B^\zeta(\what, x,x+e_2) =B^\zeta_\pm(\what, x,x+e_2) =  \inf_{\xi\in\Uset_0\,: \,  \xi\cdot e_1>\zeta\cdot e_1}B^\xi_\pm(\what, x,x+e_2 ).  
\end{aligned} \ee 

On the  event $\Ombig_0$ of full $\Pbig$-probability defined above, 
definition \eqref{q:Blim3} extends inequalities  \eqref{eq:monotone}  and right limits \eqref{eq:cont}   to all $\xi, \zeta\in\ri\Uset$, nearest-neighbor edges $(x,x+e_i)$  and sequences  $\xi_n\cdot e_1\searrow\zeta\cdot e_1$.    Extend the nearest-neighbor values $B^\zeta(x,x+e_i)$ to all $B^\zeta(x,y)$ by the procedure used earlier after \eqref{q:B3}.  Then right limits  \eqref{eq:cont}  work for all $B^\zeta(x,y)$ and sequences  $\xi_n\cdot e_1\searrow\zeta\cdot e_1$.

Fix $\zeta\in(\ri\Uset)\smallsetminus\Uset_0$.  We argue that outside  a single $\Pbig$-null set   specific to $\zeta$, we get  the left limit   \eqref{eq:cont-left}.   Define 
\begin{align*}
Y(\what, \zeta, x,x +e_1)&=\inf_{\xi\in\Uset_0 \,: \,  \xi\cdot e_1<\zeta\cdot e_1} B^\xi_{\pm}(\what, x,x+e_1)\\
Y(\what, \zeta, x,x +e_2)&=\sup_{\xi\in\Uset_0 \,: \,  \xi\cdot e_1<\zeta\cdot e_1} B^\xi_{\pm}(\what, x,x+e_2). 
\end{align*}
The left limit $Y(\what, \zeta,x,x+e_i) = \lim_{n\to\infty}B^{\zeta_n}_\pm(\what, x,x+e_i) $  happens for $\what\in\Ombig_0$ by monotonicity, for any sequence   $\zeta_n\cdot e_1\nearrow\zeta\cdot e_1$ in $\ri\Uset$. 
Now set  
\[   \Ombig^{(\zeta)} = \{ \what\in\Ombig_0:   Y(\what, \zeta, x,x +e_i)=B^\zeta(\what, x,x +e_i) \; \forall x\in\Z^2, i=1,2\}  .\]  
The monotonicity argument with coinciding expectations used above in \eqref{B-mon-6} implies that $\Pbig(\Ombig^{(\zeta)})=1$.  
From nearest-neighbor values the limits extend to all $(x,y)$ by the construction, and so on the full measure event $\Ombig^{(\zeta)}$ we have  \eqref{eq:cont-left}.

We turn to verifying the remaining claims of  Theorem \ref{th:cocycles} for the fully  defined processes $B^\xi_\pm(x,y)$.   First, right-continuity in $\xi$ is enough to make $B^\xi_+(\what,x,y)$ a jointly Borel function of  $(\what,\xi,x,y)$.    Since $B^\xi_-$ replaces the value of $B^\xi_+$ with a different Borel function of $(\what,x,y)$  only at the countably many  $\xi\in(\ri\Uset)\smallsetminus\Diff$,  $B^\xi_-(\what,x,y)$ is also jointly Borel.  

Part \eqref{th:cocycles:indep}.   
Stationarity and the independence claim are  preserved by almost sure limits but ergodicity is not.   To verify the ergodicity of  $\psi^{\zeta}_x(\what)=(\w_x, B^\zeta(\what,x,x+e_1), B^\zeta(\what,x,x+e_2))$  under both translations $T_{e_1}$ and $T_{e_2}$ we return to the queuing picture.  The limit \eqref{q:Blim3} can also be taken in the queueing processes.   First $\cA_0\ni\alpha_n=\gppa'(s_n-)\nearrow \gppa'(t)=\beta$.    Since $\cA$ is closed,  $\beta\in\cA$.  Hence there is a stationary queueing process $(\A^\beta, \S, \W^\beta)$ that satisfies Lemma \ref{q:erg-lm} and that we can include in the coupling with  the queueing processes indexed by  $\cA_0$.   The coordinatewise  monotone a.s.\ limit $\lim_{n\to\infty}(\A^{\alpha_n}, \S, \W^{\alpha_n})$ must coincide with $(\A^\beta, \S, \W^\beta)$ by the same reasoning used above:  there are inequalities, namely  $\lim_{n\to\infty}\A^{\alpha_n}_{m,k}\le \A^{\beta}_{m,k}$ and $\lim_{n\to\infty}\W^{\alpha_n}_{m,k}\ge \W^{\beta}_{m,k}$,  
 but the expectations agree and hence force a.s.\ agreement.   The continuous mapping \eqref{q:map}  transports the distribution of 
$\{  (\S_{-n,-k},\,\A^\beta_{-n-1,-k+1},\,\W^\beta_{-n,-k}+\S_{-n,-k}) : n,k\in\Z\}$  to the process 
$\{ (\w_x, B^\zeta(x,x+e_1),B^\zeta(x,x+e_2)): x\in\Z^2\}$,  which   thereby inherits from Lemma \ref{q:erg-lm}  the   ergodicity claimed in part \eqref{th:cocycles:indep} of  Theorem \ref{th:cocycles}.  

The cocycle properties and expectations in part \eqref{th:cocycles:exist} are preserved by the  limits.   The identities of part \eqref{th:cocycles:flat} continue to hold without null sets because if vector $h(\zeta)$ is not unique to $B^\zeta$, then 
$\zeta$ lies in the interior of some  linear segment  $]\xi',\xi''[$  of $\gpp$ with $\xi',\xi''\in\Uset_0$ and $\xi'\cdot e_1<\xi''\cdot e_1$.   The construction  (\eqref{q:B3} and \eqref{q:Blim3}) then implies that 
$B^\zeta= B^{\xi''}_-=B^{\xi'}_+$ for all $\zeta\in ]\xi',\xi''[$.  
 The   inequalities and limits of parts \eqref{th:cocycles:cont}--\eqref{th:cocycles:cont-left} were discussed above.   This completes the proof of    Theorem \ref{th:cocycles}. 
 \qed\end{proofof}

\subsection{Exactly solvable models}\label{sec:solv}
 
We describe briefly how the calculations work in  the exactly solvable geometric case discussed in Section \ref{subsec:solv}.     (The exponential case is completely analogous.) The weights $\{\w_x\}$  are  i.i.d.\   with $\P(\w_x=k)=(1- {\Ew}^{-1})^{k-1} {\Ew}^{-1}$ for $k\in\N$, mean $\Ew=\E(\w_0)>1$ and variance $\sigma^2=\Ew(\Ew-1)$.    



   With  i.i.d.\ geometric service times $\{\S_{n,0}\}$  with mean $\Ew$, let the initial arrival process $\{A_{n,0}\}$ be  i.i.d.\ geometric with mean $\alpha$.  Let $J_n=S_{n,0}+W_{n,0}$.   Then equations \eqref{q:lin}    and  \eqref{def:A} show that   the process $\{(A_{n,1}, J_{n+1})\}_{sn\in\Z}$ is an irreducible  aperiodic  Markov chain with transition probability 
\be\label{g:trans}  \begin{aligned}
&P( A_{n,1}=b, J_{n+1}=j\,\vert\, A_{n-1,1}=a, J_{n}=i)\\
&\qquad\qquad 
=  P\bigl\{  (A_{n,0}-i)^+ +S_{n+1,0}=b,  \,  (i-A_{n,0})^+ +S_{n+1,0}= j\bigr\}.  
 \end{aligned}\ee
The equations also show that $(A_{n,0}, S_{n+1,0})$ is independent of $(A_{n-1,1}, J_{n})$.    Since  the process $\{(A_{n,1}, J_{n+1})\}_{n\in\Z}$ is  stationary, its marginal must be the unique invariant distribution of transition \eqref{g:trans}, namely 
\[   P(A_{n-1,1}=k, J_{n}=j) =   (1-{\alpha}^{-1})^{k-1}  {\alpha}^{-1} \cdot   (1- {\f(\alpha)}^{-1})^{j-1}  {\f(\alpha)}^{-1} \qquad \text{for $k,j\in\N$,}  
\]
with $\f(\alpha) = \Ew\frac{\alpha-1}{\alpha-\Ew} $. 
This shows that i.i.d.\ mean $\alpha$ geometric is a queuing  fixed point. 
Next solve for $\gppa(s)=\inf_{\alpha>\Ew}\{\alpha s+\f(\alpha)\}$.  The unique minimizing $\alpha$ in terms of $s=\xi\cdot e_1/\xi\cdot e_2$ is 
$\alpha = \Ew+\sigma  \sqrt{\xi\cdot e_2/\xi\cdot e_1}$ 
  which defines the  bijection between  $\xi\in\ri\Uset$  and $\alpha\in(\Ew,\infty)$.  From this  
\[\f(\alpha) = \Ew\frac{\alpha-1}{\alpha-\Ew}  = \Ew+\sigma \sqrt{\xi\cdot e_1/\xi\cdot e_2}.  
 \]
Finding $\gamma(s)$ gives   \eqref{eg:gpp} via \eqref{eq:gbar}  and then $\gamma'(s)$ gives   \eqref{geom:B} via \eqref{grad g-1} and \eqref{eq:h=grad}. 

The terms in the sum $J_n=S_{n,0}+W_{n,0}$ are independent, so we can  also find the distribution of the waiting time: 
\[  P(W_{n,0}=0)= \frac{\alpha-\Ew}{\alpha-1}, \quad 
P(W_{n,0}=k)= \frac{\Ew-1}{\alpha-1} \cdot   \bigl(1- {\f(\alpha)}^{-1}\,\bigr)^{k-1}  {\f(\alpha)}^{-1}  \quad (k\ge 1). 
\]

\appendix

\section{Ergodic theorem for cocycles}  
\label{app:aux}

Cocycles satisfy a uniform ergodic theorem. The following is a special case of Theorem 9.3 of  \cite{Geo-etal-15-}. Note that a one-sided bound suffices for a hypothesis.   Recall   Definition \ref{def:cK} for the space $\Cor$ of centered cocycles.   

\begin{theorem}\label{th:Atilla}
Assume $\P$ is ergodic under the transformations  $\{T_{e_i}:i\in\{1,2\}\}$. 
Let $F\in\Cor$.   Assume there exists  a function $V$ such that  for $\P$-a.e.\ $\w$  
\be \label{cL-cond}
\varlimsup_{\e\searrow0}\;\varlimsup_{n\to\infty} \;\max_{x: \abs{x}_1\le n}\;\frac1n \sum_{0\le k\le\e n} 
\abs{V(T_{x+ke_i}\w)}=0\qquad\text{for $i\in\{1,2\}$ }\ee  
and 
$\max_{i\in\{1,2\}} F(\w,0,e_i)\le V(\w)$.  
Then  
\[\lim_{n\to\infty}\;\max_{\substack{x=z_1+\dotsm+z_n\\z_{1,n}\in\{e_1, e_2\}^n}} \;\frac{\abs{F(\w,0,x)}}n=0 \qquad\text{for   $\P$-a.e.\ $\w$.}\]
\end{theorem}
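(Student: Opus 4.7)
\emph{Plan.} The strategy is to exploit cocycle additivity to reduce $F(\w,0,x)$ to one-dimensional sums along the two coordinate axes, apply the classical $L^1$ Birkhoff ergodic theorem in each direction separately using the centering $F\in\Cor$, and then use \eqref{cL-cond} to paste the directional limits together uniformly over the $O(n)$ possible endpoints. Since $F$ is additive, $F(\w,0,x)$ depends only on $x$ and not on the admissible path, so the supremum in the claim reduces to $\max_{0\le k\le n}\lvert F(\w,0,ke_1+(n-k)e_2)\rvert$. Going right $k$ steps and then up $n-k$ steps, additivity gives the splitting $F(\w,0,ke_1+(n-k)e_2)=A_k(\w)+B_{k,n-k}(\w)$ where
\[A_k=\sum_{i=0}^{k-1} F(T_{ie_1}\w,0,e_1),\qquad B_{k,l}=\sum_{j=0}^{l-1} F(T_{ke_1+je_2}\w,0,e_2).\]

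The $A$-term is handled directly: $\{F(T_{ie_1}\cdot,0,e_1)\}_i$ is a stationary $L^1$ mean-zero process and $\P$ is $T_{e_1}$-ergodic, so Birkhoff yields $A_k/k\to 0$ almost surely, and the elementary fact that $a_k/k\to 0$ implies $n^{-1}\max_{k\le n}\lvert a_k\rvert\to 0$ promotes this to $n^{-1}\max_{0\le k\le n}\lvert A_k\rvert\to 0$. For the $B$-term, fix $\e>0$, set $m=\fl{\e n}$, and discretize the base at $k_r=rm$ for $0\le r\le 1/\e$. At each reference $k_r$, Birkhoff in the $e_2$-direction (using $T_{e_2}$-ergodicity) gives $n^{-1}\lvert B_{k_r,n-k_r}\rvert\to 0$ a.s., and the max over the finitely many $r$'s preserves this. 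For arbitrary $k\in[k_r,k_{r+1})$, additivity around the ``L-shaped'' path going rightward from $(k_r,n-k_r)$ to $(k,n-k_r)$ and then downward to $(k,n-k)$ writes $B_{k,n-k}-B_{k_r,n-k_r}$ as a sum of at most $2m$ single-edge cocycle values. Using the hypothesis $F(\w,0,e_i)\le V(\w)$ and its cocycle reflection $F(\w,0,-e_i)=-F(T_{-e_i}\w,0,e_i)\ge -V(T_{-e_i}\w)$, this L-shaped telescope is bounded in absolute value by sums of the form $C\sum_{0\le j\le m}\lvert V(T_{x+je_i}\w)\rvert$ with $\lvert x\rvert_1\le n+o(n)$, which by \eqref{cL-cond} are uniformly $o(n)$ as $\e\searrow 0$. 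Sending $\e\searrow 0$ yields $n^{-1}\max_k\lvert B_{k,n-k}\rvert\to 0$.

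The principal obstacle is obtaining a matching lower bound for the L-shaped telescope: \eqref{cL-cond} controls $\lvert V\rvert$, yet the hypothesis only provides the one-sided domination $F\le V$. The fix is to combine the cocycle reflection $F(\w,0,-e_i)=-F(T_{-e_i}\w,0,e_i)$, which converts the upper bound at the shifted point into a lower bound, with the centering identity $\E F=0$ that forces $\E F^-=\E F^+\le\E V^+$; together these absorb every wrong-sign contribution into a multiple of a short-interval sum of $\lvert V\rvert$. At that point the uniform-in-base-point hypothesis \eqref{cL-cond} finishes the argument, whereas a direct application of Birkhoff at each base point would only yield null sets depending on $k$, failing the required control over $O(n)$ endpoints.
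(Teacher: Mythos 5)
The paper itself does not prove Theorem \ref{th:Atilla}; it quotes it as a special case of Theorem 9.3 of \cite{Geo-etal-15-}, so your attempt must be judged against that argument. Your reduction to endpoints $ke_1+(n-k)e_2$ and the treatment of the $A_k$-sums are fine, but the handling of the $B$-terms has two genuine gaps. The first is the claim that ``Birkhoff in the $e_2$-direction'' gives $n^{-1}\abs{B_{k_r,n-k_r}}\to 0$: since $k_r=r\fl{\e n}$ grows with $n$, the $e_2$-sums are taken in the environments $T_{r\fl{\e n}e_1}\w$, a sequence of base points moving with $n$. Almost sure convergence for each fixed column (even simultaneously for all columns, by countability) does not give convergence along such a moving sequence; uniformity over $O(n)$ base points is precisely the content of the theorem, so as written this step is circular. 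It is repairable: write $B_{k_r,n-k_r}=F(\w,0,v_n)-A_{k_r}$ with $v_n$ within bounded distance of multiples of a fixed rational vector $u=pe_1+qe_2$, apply Birkhoff to $T_u$, and identify the limit (a priori only $T_u$-invariant, since ergodicity under $T_u$ is not assumed) as the constant $0$ by showing via the cocycle property and a Borel--Cantelli bound on $k^{-1}F(T_{ku}\w,0,e_1)$ that it is also $T_{e_1}$-invariant; but none of this is in your proposal.

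The second gap is more serious. Your interpolation compares $B_{k,n-k}$ with $B_{k_r,n-k_r}$ along an L-shaped path joining two points of the \emph{same} antidiagonal, which necessarily contains both forward $e_1$-steps and backward $e_2$-steps; bounding its absolute value therefore requires pointwise \emph{lower} bounds on $F(\cdot,0,e_1)$ and $F(\cdot,0,e_2)$, which the one-sided hypothesis does not supply. Your proposed fix does not close this: the reflection identity $F(\w,0,-e_i)=-F(T_{-e_i}\w,0,e_i)$ only re-expresses increments and never converts the bound $F(\cdot,0,e_i)\le V$ into a lower bound, while the centering identity $\E F^-=\E F^+\le \E V^+$ is an expectation statement that cannot control maxima of short-segment sums of $F^-$ over $O(n)$ locations --- such uniform control of straight-segment increments is essentially equivalent to the theorem being proved. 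The proof behind Theorem 9.3 of \cite{Geo-etal-15-} avoids absolute values altogether: each endpoint $x$ on level $n$ is sandwiched coordinatewise between points $y_-\le x\le y_+$ lying within $\ell^1$-distance $C\e n$ of finitely many fixed rational rays, and the bounds $F(\w,0,x)\le F(\w,0,y_-)+F(\w,y_-,x)$ and $F(\w,0,x)\ge F(\w,0,y_+)-F(\w,x,y_+)$ apply the one-sided bound only to forward (up-right) increments, which \eqref{cL-cond} controls via two straight segments, while the ray terms are handled by the directional ergodic theorem described above. Reorganizing your discretization in this sandwiching form, rather than interpolating along the antidiagonal, is what makes the one-sided hypothesis suffice.
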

If the process  $\{V(T_x\w):x\in\Z^2\}$ is  i.i.d.,  then a sufficient condition for \eqref{cL-cond} is  $\E(\abs{V}^p)<\infty$ for some $p>2$  \cite[Lemma A.4]{Ras-Sep-Yil-13}

 \begin{acknowledgements}
The authors thank Yuri Bakhtin and  Michael Damron for useful discussions and two anonymous referees for valuable comments.  
N.\ Georgiou was partially supported by a Wylie postdoctoral fellowship at the University of Utah and the Strategic
Development Fund (SDF) at the University of Sussex.
F.\ Rassoul-Agha and N.\ Georgiou were partially supported by National Science Foundation grant DMS-0747758.
F.\ Rassoul-Agha was partially supported by National Science Foundation grant DMS-1407574 and by Simons Foundation grant 306576.
T.\ Sepp\"al\"ainen was partially supported by  National Science Foundation grants DMS-1306777 and DMS-1602486, by Simons Foundation grant 338287, and by  the Wisconsin Alumni Research Foundation.
\end{acknowledgements}



\bibliographystyle{spmpsci-nourlnodoi}      
\bibliography{firasbib2010}   


\end{document}